\def\r{\mathbb R}
\def\s{\mathbb S}
\newtheorem{theorem}{Theorem}[section]
\newtheorem{proposition}[theorem]{Proposition}
\newtheorem{remark}[theorem]{Remark}
\newtheorem{corollary}[theorem]{Corollary}
\begin{document} 

\title{Compact singular minimal  surfaces with boundary}
\author{Rafael L\'opez}
 \thanks{Partially supported by the grant no. MTM2017-89677-P, MINECO/AEI/FEDER, UE}
 \address{Departamento de Geometr\'{\i}a y Topolog\'{\i}a\\ Instituto de Matem\'aticas (IEMath-GR)\\
 Universidad de Granada\\
 18071 Granada, Spain}
  \email{ rcamino@ugr.es}
\date{}

\begin{abstract} We study  of the shape of a compact singular minimal surface in terms of the geometry of its boundary, asking what type of {\it a priori} information  can be obtained on  the surface from the knowledge of its boundary.  We derive  estimates of the area and the height in terms of the boundary. In case that  the boundary is a circle, we study under what conditions the surface is rotational. Finally, we deduce non-existence results when    the boundary is formed by two curves that are sufficiently far apart.\end{abstract}

\subjclass{53C42, 35J20, 49F22}

\keywords{singular minimal surface, tangency principle, Alexandrov reflection method}
\maketitle
\section{Introduction}

In this paper we consider the following variational problem in three-dimensional Euclidean space $(\r^3,\langle,\rangle)$. Let $\alpha\in\r$ denote some real number and let   $\vec{a}\in\r^3$ be a unit vector. Consider the halfspace  $\r_+^3(\vec{a})=\{p\in\r^3:\langle p,\vec{a}\rangle>0\}$ and the plane $\r^3_0(\vec{a})=\{p\in\r^3:\langle p,\vec{a}\rangle=0\}$. Let  $\phi:M\rightarrow\r^3_+(\vec{a})$ be a smooth immersion of an oriented compact surface $M$ with non-empty boundary $\partial M$.  We define the potential energy  of $\phi$ in the direction of $\vec{a}$ by
$$E(\phi)=\int_M \langle \phi(p),\vec{a}\rangle^\alpha\ dM,$$
where $dM$ is the measure on $M$ with respect to the induced metric. Let $\Phi:M\times(-\epsilon,\epsilon)\rightarrow\r_+^3(\vec{a})$ be  a compactly supported  variation of $\phi$ fixing   $\partial M$  whose variational vector field is   $\xi=(\partial\Phi/\partial t)_{t=0}$.  By letting $E(t)=E(\Phi(-,t))$,    the first variation of $E$ is
$$E'(0)=-\int_M \left(2H-\alpha\frac{\langle N,\vec{a}\rangle}{ \langle\phi,\vec{a}\rangle}\right)\langle\phi,\vec{a}\rangle^\alpha u\ dM,$$
where $N$ and $H$ denote the Gauss map and the mean curvature of  $\phi$, respectively, and $u=\langle N,\xi\rangle$: see \cite{di1,di2}.  Then $\phi$ is a critical point of $E$ for all compactly supported variations of $\phi$  if and only if
\begin{equation}\label{mean}
2H=\alpha \frac{\langle N,\vec{a}\rangle}{\langle \phi,\vec{a}\rangle}\quad \mbox{on $M$.}
\end{equation}
Following Dierkes in \cite{di2}, an immersed surface $M$ in $\r^3_{+}(\vec{a})$ that satisfies (\ref{mean}) is called a  {\it singular minimal surface}: in case that we want to emphasize the constant  $\alpha$, we   say that $M$ is  an {\it $\alpha$-singular minimal surface}. We refer \cite{di2,ni} for a historical introduction and a non-exhaustive list of references includes:  \cite{bht,di,dh,dt,ke,lo,lo3,te,wi}

The parenthesis in the expression of $E'(0)$   also appears in the context of manifolds with density if we see the term $\langle\phi,\vec{a}\rangle^\alpha dM $ as a way to weight the surface area.  In general, let $e^\varphi$ be a   positive density function in $\r^3$, $\varphi\in C^\infty(\r^3)$, which serves as a weight for  the volume and the surface area (\cite{gr,mo}). Note that  this is not equivalent to scaling the metric conformally by  $e^\varphi$ because the area and the volume   change with different scaling factors. Denote by $dA_\varphi=e^\varphi dM $ the area element of $M$ with  density $e^\varphi$. Consider a variation $\Phi$ of $M$ as above and let    $A_\varphi(t)$ and $V_\varphi(t)$ be  the weighted area and the enclosed weighted  volume  of $\Phi(M\times\{t\})$, respectively. Then the first variation of $A_\varphi(t)$ and $V_\varphi(t)$ are, respectively, 
$$A'_\varphi(0)=-2\int_M H_\varphi u\  dA_\varphi,\quad V_\varphi'(0)=\int_M u\ dA_\varphi,$$
where $H_\varphi=H-\langle\nabla\phi,N\rangle/2$ is called the {\it weighted mean curvature} (\cite{cab}).  If we choose $\varphi(q)=\log(\langle q,\vec{a}\rangle^\alpha)$, then 
$$H_\varphi=H-\alpha\frac{\langle N,\vec{a}\rangle}{2\langle\phi,\vec{a}\rangle}.$$
As a consequence of the Lagrange multipliers,     $M$ is a critical point of the  area $A_\varphi$ for a  prescribed weighted volume if and only if  $H_\varphi$ is a constant function. Equation  (\ref{mean}) corresponds simply with a surface that is a critical point of $A_\varphi$, namely,  $H_\varphi=0$. We vector  $\vec{a}$ is called the {\it density vector}.

Some special cases of $\alpha$-singular minimal surfaces are: if $\alpha=0$, then $M$ is a minimal surface;  if $\alpha=-2$, $M$ is a minimal surface in   $3$-dimensional hyperbolic space  when this space is viewed in the upper half-space model. A remarkable case is   $\alpha=1$, because these surfaces have the minimal potential energy under gravitational forces, or in other words, have the lowest center of gravity. These surfaces  were coined in the literature as the ``two-dimensional analogue of the catenary'' because   the catenary is the planar curve with the lowest center of gravity  (\cite{bht,dh}).   

In this paper, we will assume $\alpha\not=0$.  We are interested in those singular minimal surfaces with  special geometric properties. A first family of surfaces  are the  cylindrical surfaces, that is,  ruled surfaces where all the rulings are parallel.   Although there is not an {\it a priori } relation between the direction of the rulings and the density vector $\vec{a}$, it is not difficult to see that either the surface is a  parallel plane    to  $\vec{a}$ or the   rulings must be orthogonal to $\vec{a}$  (\cite{lo}). In the latter case, the surface is generated by a planar curve $\gamma:I\rightarrow\r^2$ whose  curvature $\kappa$     satisfies the one-dimensional case of (\ref{mean}) 
   \begin{equation}\label{cate}
 \kappa(s)=\alpha\frac{\langle{\textbf n}(s),\vec{w}\rangle}{\langle\gamma(s),\vec{w}\rangle},\qquad(s\in I),
 \end{equation}
 where $\vec{w}\in\r^2$ is a unit vector and ${\textbf n}$ is the unit principal normal vector of $\gamma$. In such a case, we say that $\gamma$ is an {\it $\alpha$-catenary} and the corresponding ruled surface is called an {\it $\alpha$-catenary cylinder}. See Figure  \ref{figex}, left. When $\alpha=1$, $\gamma$ is, indeed,  a catenary.  

 The second family of singular minimal surfaces are those ones of rotational type. It was proved in \cite{lo} that either the rotation axis  is parallel to the density vector $\vec{a}$ or the rotation axis  is contained in the plane $\r^3_{0}(\vec{a})$. In the first case, and for $\alpha>0$, these  surfaces were studied by Keiper in  an unpublished paper \cite{ke} but  well known for specialists (\cite{bd,di,di1,di2}). For this class of surfaces, there are   two types of rotational surfaces depending if  the surface meets (necessarily orthogonally) the rotational axis (Figure  \ref{figex}, right) or if it does not meet it and in such a case,  the surface is winglike-shaped  (Figure  \ref{figex2}, left).   A limit  case occurs when the profile curve meets the plane $\r^3_0(\vec{a})$, being  the surface  a   half-cone  (Figure  \ref{figex2}, right). In case that  the rotation axis lies in $\r^3_0(\vec{a})$,  the profile curve is just an $(\alpha+1)$-catenary (\cite{lo}): see Figure  \ref{figex3}.

\begin{figure}[hbtp]
\begin{center}
\includegraphics[width=.4\textwidth]{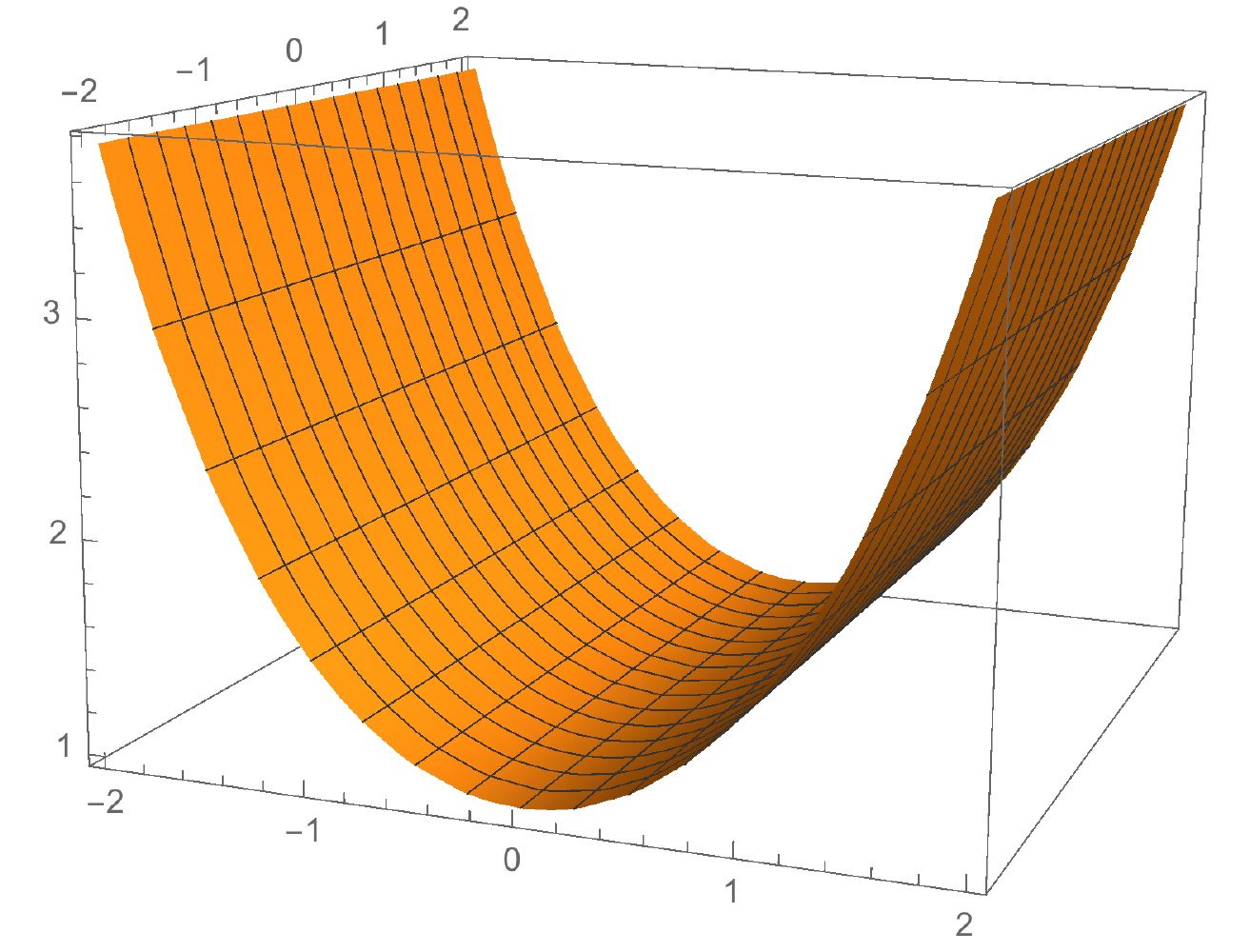}  \includegraphics[width=.4\textwidth]{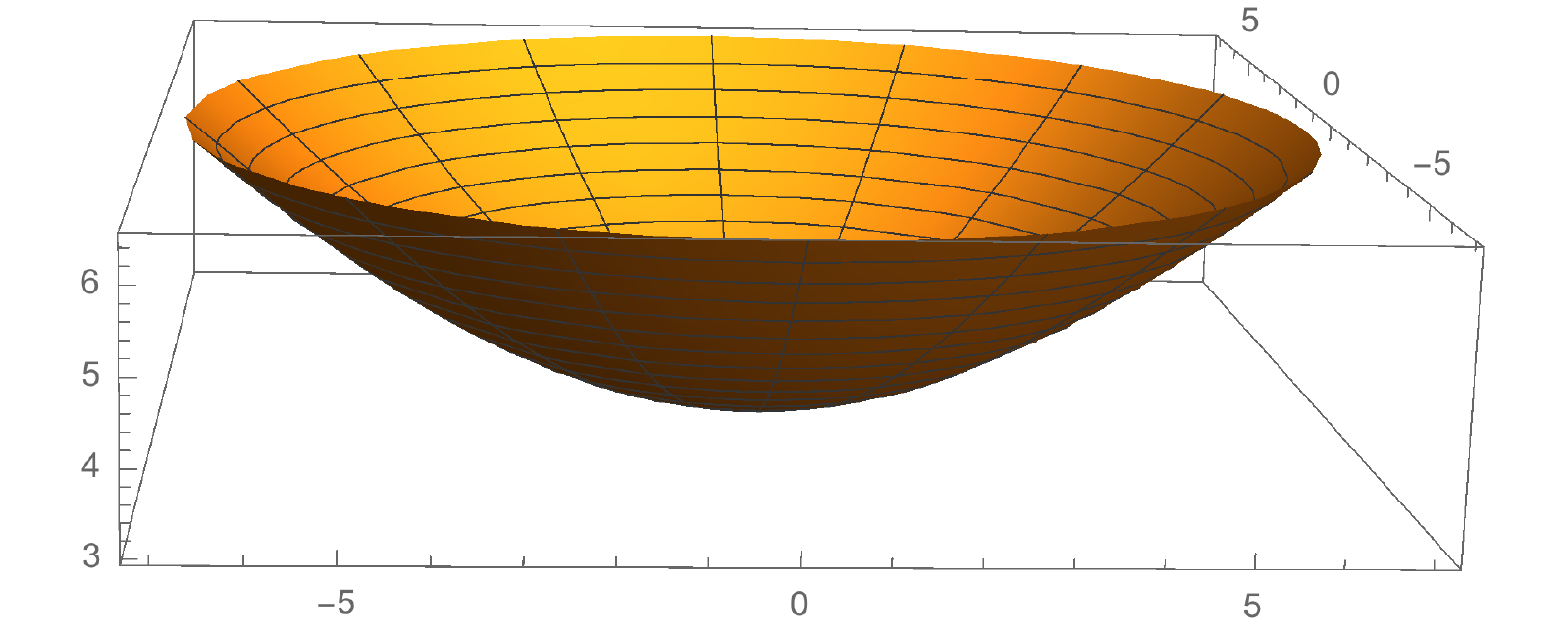}
\end{center}
\caption{The catenary cylinder (left) and a rotational  $\alpha$-singular minimal surface (right) for $\alpha=1$}\label{figex}
\end{figure}

\begin{figure}[hbtp]
\begin{center}
\includegraphics[width=.4\textwidth]{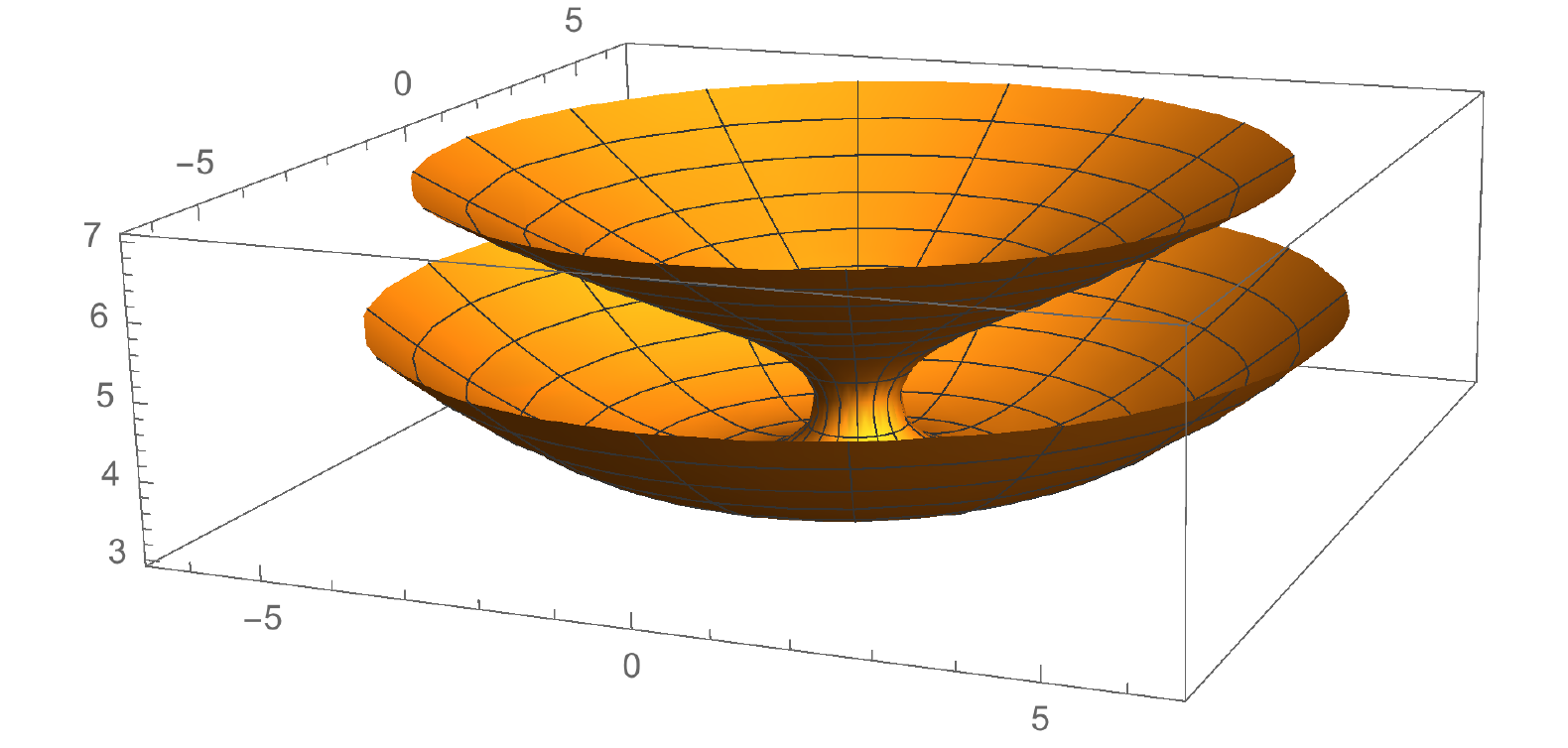}  \includegraphics[width=.4\textwidth]{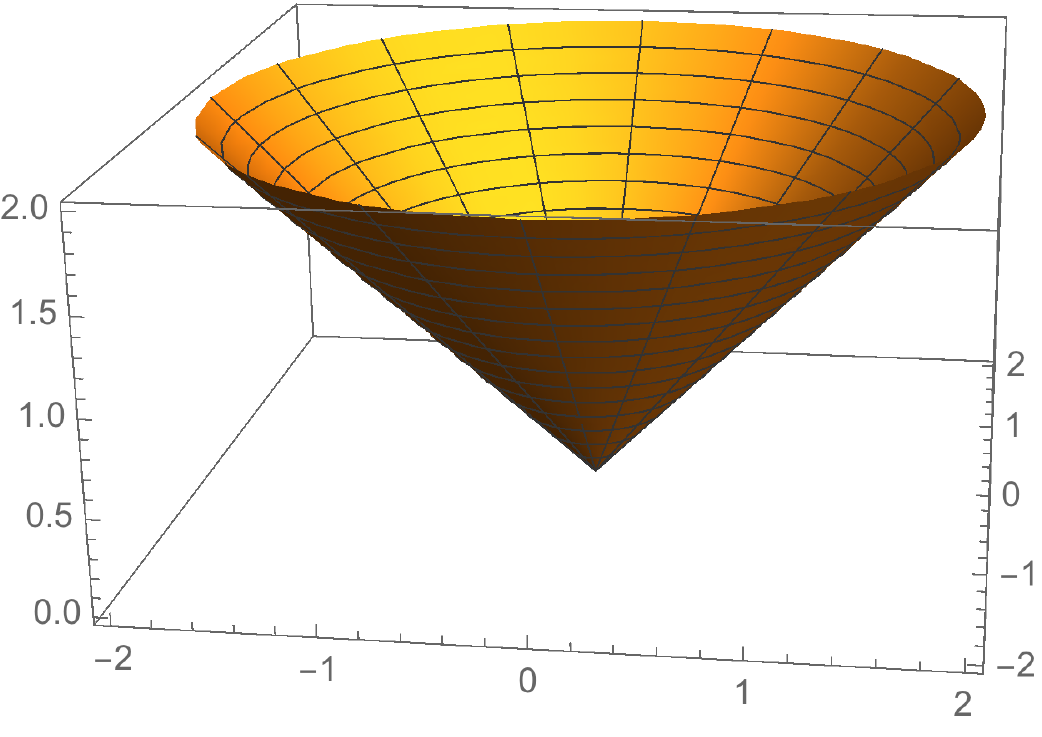}
\end{center}
\caption{A winglike $\alpha$-singular minimal surface (left)  and a half-cone (right) for $\alpha=1$}\label{figex2}
\end{figure}
\begin{figure}[hbtp]
\begin{center}
\includegraphics[width=.3\textwidth]{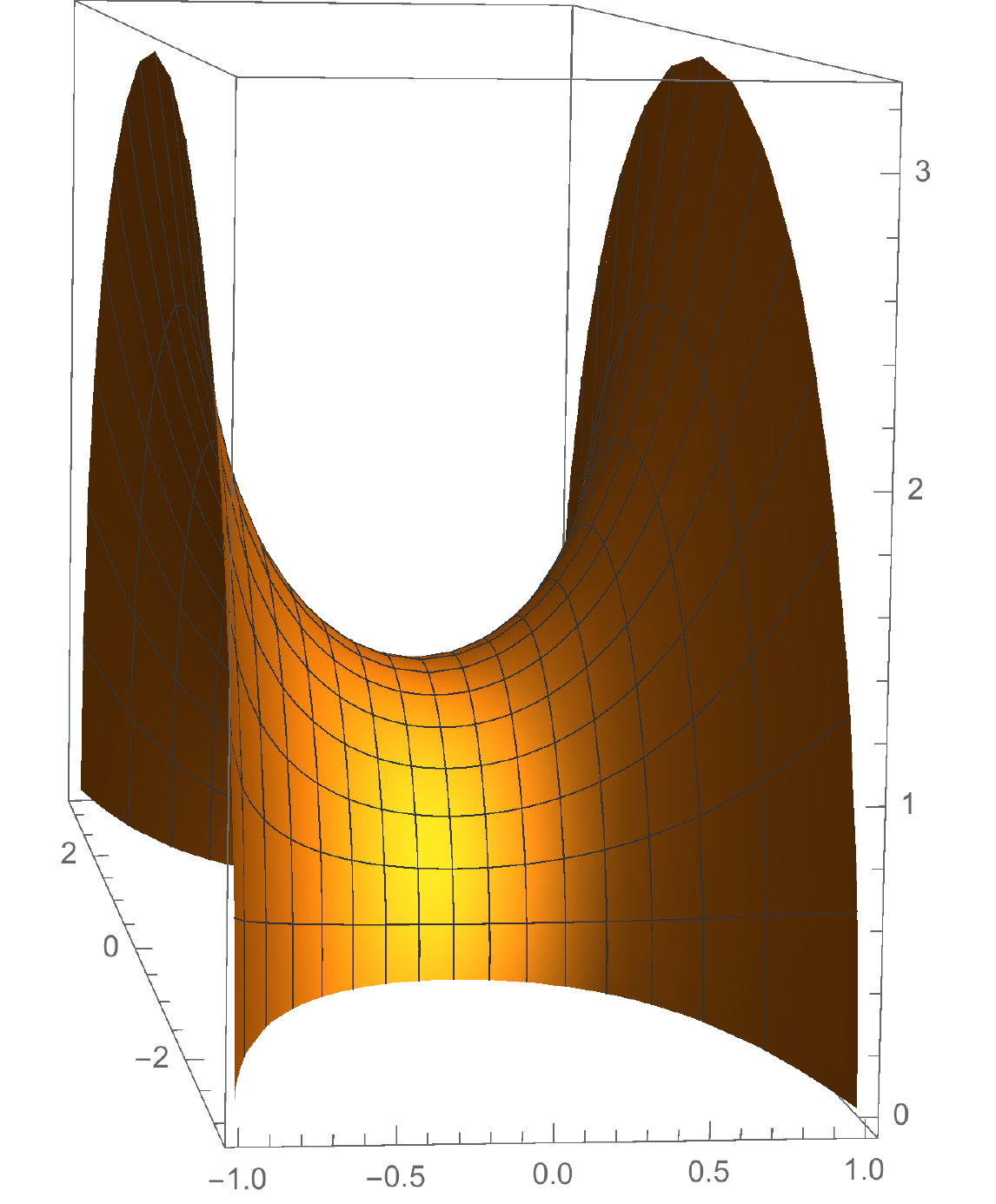} \end{center}
\caption{A $1$-minimal surface that is rotational about the $x$-axis. Here $\vec{a}=(0,0,1)$ and the generating curve is a $2$-catenary}\label{figex3}
\end{figure}
We precise the terminology. Let $\Gamma\subset\r^3$ be a closed curve and let $M$ be a   surface with smooth boundary $\partial M$. As usual, $M$ is said to be a surface with boundary $\Gamma$ if there exists an immersion $\phi:M\rightarrow\r^3_{+}(\vec{a})$ such that the restriction of  $\phi$   to   $\partial M$ is a diffeomorphism onto $\Gamma$. When $\phi$ is an embedding,   we write simply $\partial M$ instead of $\Gamma$.

The purpose of this paper is the  study  of the shape of a compact singular minimal surface in terms of the geometry of its boundary, asking what type of {\it a priori} information  can be obtained on  the surface from the knowledge of its boundary. Notice   that there do not exist closed surfaces with constant weighted mean curvature (Proposition \ref{p-not}) and thus, a compact singular minimal surface has non-empty boundary. Some  enclosure results were established in \cite{bht} by means of  the maximum principle. So,  if $\partial M$ is contained in a certain domains  of a solid cone (or a paraboloid), then $M$ is included in the same domain.  

A first question that we address is:

\begin{quote} {\bf Problem A.} {\it Find estimates of the size of a compact singular minimal surface  in terms of  its boundary.}
\end{quote}

In Section  \ref{s-area} we deduce   a priori estimates of the   surface area and in Section   \ref{s-height} we derive  height estimates   for a rotational singular minimal surface in terms of the height of its boundary.

Other interesting question concerns to the following  

\begin{quote} {\bf Problem B.} {\it Under what  conditions a compact singular minimal surface  inherits the symmetries of its boundary.}
\end{quote}

For example,  we ask if a singular minimal surface with boundary a circle   is a surface of revolution.  In Section     \ref{s-b1} we prove affirmatively   in the special case that the circle is  contained in a plane orthogonal to the  vector $\vec{a}$,  proving in Corollary   \ref{c-c1}:

\begin{quote}{\it  Surfaces of revolution are the only  embedded compact singular minimal surfaces with boundary a circle contained in an orthogonal plane to the density vector $\vec{a}$.}
\end{quote}

If $\alpha<0$, we can drop the embeddedness assumption in te above result (Proposition \ref{pr-ne}). Similarly, in Corollary   \ref{c-c2} we prove that if an  embedded compact singular minimal surface with boundary contained in a plane orthogonal to $\vec{a}$  makes a contact angle with the boundary plane, then the surface must be rotational and its boundary is a circle.  The main ingredient  in both results is the Alexandrov reflection method, which uses  the maximum principle for the singular minimal surface equation (\ref{mean}).

In Section     \ref{s-b2} we give non-existence results of singular minimal surfaces spanning two given curves. Roughly speaking, we prove  that if   two curves are far sufficiently apart, then they can not span a   compact connected singular minimal surface: the distance between both curves also depends on the separation with respect to the plane $\r^3_{0}(\vec{a})$.

{\bf Acknowledgements.} Part of this paper was done by the author in 2016 during a  visit in  the Department of Mathematics of the   RWTH Aachen University. The author thanks specially to Prof. Josef Bemelmans for his valuable discussions and hospitality.

 \section{Preliminaries and the tangency principle}\label{s-pre}

We stand for $(x,y,z)$ the canonical coordinates of Euclidean space $\r^3$ and we shall use the terminology horizontal and vertical as usual, where $z$ indicates the vertical direction: in general, a horizontal (resp. vertical) direction means orthogonal (resp. parallel) to the density vector.

It is   immediate that the    singular minimal surface equation  (\ref{mean}) is invariant by some transformations of Euclidean space. For example,  if $M$ is an $\alpha$-singular minimal surface and   $T$ is a translation along a horizontal direction   or $T$ is a rotation about a vertical line, then $T(M)$ is also an $\alpha$-singular minimal surface. The same occurs if $T$ is a dilation  with positive ratio and center any point of $\r^3_0(\vec{a})$. We  point out that if we reverse the orientation on $M$, $H$ changes of sign, hence the equation (\ref{mean}) is preserved.

 In order to study the   behavior of a singular minimal surface,  we look equation (\ref{mean}) in  local coordinates.  After a change of coordinates, we suppose that the density vector is $\vec{a}=(0,0,1)$. Then it is immediate that  equation (\ref{mean}) in  nonparametric form    is 
\begin{equation}\label{mean1}
\mbox{div}\left(\frac{Du}{\sqrt{1+|Du|^2}}\right)=\frac{\alpha}{u\sqrt{1+|Du|^2}},
\end{equation}
where $z=u(x,y)$. Equation (\ref{mean1})  is of elliptic type and  satisfies a maximum principle that   can be formulated as follows (\cite[Th. 10.1]{gt}). 

\begin{proposition}[Tangency principle] Suppose $M_1$ and $M_2$ two surfaces with weighted mean curvatures $H_\varphi^1$ and $H_\varphi^2$ respectively. Suppose $M_1$ and $M_2$ are tangent at a common interior point $p$ and the corresponding Gauss maps $N_1$ and $N_2$ coincide at $p$. If $H_\varphi^2\leq H_\varphi^1$ in a neighborhood of $p$, then it is not true that $M_2$ lies above $M_1$ near $p$ with respect to $N_1(p)$, unless $M_1=M_2$ in a neighborhood of $p$. If $p\in\partial M_1\cap \partial M_2$ is a boundary point, the same holds if, in addition,  we have $T_p\partial M_1=T_p\partial M_2$. 
\end{proposition}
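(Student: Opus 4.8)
The plan is to reduce the statement to the scalar maximum principle for the quasilinear elliptic operator underlying (\ref{mean1}) and then invoke the classical interior and boundary maximum principles of Gilbarg--Trudinger. After a rigid motion I would place $p$ at the origin and the common tangent plane $T_pM_1=T_pM_2$ as the $(x,y)$-plane, with $N_1(p)=N_2(p)$ pointing in the positive $z$-direction. Since both Gauss maps agree at $p$, in a neighborhood of $p$ each $M_i$ is a graph $z=u_i(x,y)$ over a common planar domain with $u_1(0)=u_2(0)$ and $Du_1(0)=Du_2(0)$. Writing the weighted mean curvature in nonparametric form as in (\ref{mean1}), for the upward normal one has $2H_\varphi^i=Q[u_i]$, where
$$Q[u]=\mbox{div}\left(\frac{Du}{\sqrt{1+|Du|^2}}\right)-\frac{\alpha}{u\sqrt{1+|Du|^2}},$$
so that the hypothesis $H_\varphi^2\leq H_\varphi^1$ becomes $Q[u_2]\leq Q[u_1]$. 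The operator $Q$ is quasilinear and, for bounded gradients and $u>0$, uniformly elliptic, its principal part being the minimal surface operator.

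Next I would linearize the difference. Setting $w=u_2-u_1$, I would rewrite $Q[u_2]-Q[u_1]$ as a single linear elliptic expression $Lw=A^{ij}D_{ij}w+B^iD_iw+Cw$ by the usual device of adding and subtracting intermediate terms and applying the mean value theorem along the segment $tu_2+(1-t)u_1$, $t\in[0,1]$. The matrix $(A^{ij})$ is positive definite and the coefficients $B^i,C$ are bounded near $p$; the sign of $C$ is immaterial here because the extremal value of $w$ will be zero. The hypothesis gives the differential inequality $Lw\leq 0$, and the statement ``$M_2$ lies above $M_1$ with respect to $N_1(p)$'' means exactly $w\geq 0$ near $p$, with $w(0)=0$ and $Dw(0)=0$.

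For an interior point $p$, the function $w$ is a nonnegative supersolution of $L$ attaining the interior minimum value $0$ at the origin, so the strong maximum principle (\cite[Th. 3.5]{gt}) forces $w\equiv 0$, that is, $M_1=M_2$ near $p$. For a boundary point $p\in\partial M_1\cap\partial M_2$, the extra hypothesis $T_p\partial M_1=T_p\partial M_2$ is precisely what lets me realize $M_1$ and $M_2$ as graphs over a common one-sided domain whose boundary is smooth at the origin and satisfies an interior sphere condition there. Then $w\geq 0$ attains its minimum $0$ at the boundary point $0$ with, crucially, $Dw(0)=0$ coming from the coincidence of the Gauss maps; the Hopf boundary point lemma (\cite[Lemma 3.4]{gt}) rules out $w\not\equiv 0$, since that would require a strictly negative outward normal derivative of $w$ at $0$. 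I expect the only delicate points to be the verification that the linearized coefficients are controlled so that the elliptic machinery applies verbatim, and, in the boundary case, the careful choice of the common domain and interior sphere; the tangency condition on the boundaries is exactly the hypothesis that makes this geometric setup legitimate.
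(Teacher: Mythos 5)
Your proof is correct and is, in substance, the same argument the paper relies on: the paper offers no proof of this proposition at all, but disposes of it with the citation \cite[Th.~10.1]{gt}, and your scheme --- write both surfaces as graphs over the common tangent plane, linearize the difference of the two quasilinear operators along the segment $tu_2+(1-t)u_1$, then apply the strong maximum principle at an interior point, and the Hopf boundary point lemma together with $Dw(0)=0$ at a boundary point --- is exactly the argument that citation encodes. Two comparative remarks. First, your observation that the sign of the zeroth-order coefficient $C$ is immaterial because the touching value of $w$ is zero is not a side remark but the crux, and it makes your argument strictly more careful than the paper's bare citation: the comparison theorem in \cite{gt} is stated under a monotonicity hypothesis (the operator non-increasing in the $z$ variable), and here $\partial_z\left(-\alpha/(z\sqrt{1+|p|^2})\right)=\alpha/(z^2\sqrt{1+|p|^2})>0$ when $\alpha>0$, so that hypothesis fails in precisely the case $\alpha>0$ used in most of the paper; your argument survives because $w\ge 0$ and $Lw\le 0$ yield $A^{ij}D_{ij}w+B^iD_iw\le -Cw\le 0$ whenever $C\ge 0$, and the decomposition $C=C^{+}-C^{-}$ reduces the general case to a nonpositive zeroth-order coefficient, after which the maximum principle and Hopf lemma apply with no sign assumption on $C$. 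Spelling that reduction out would make your proof complete on this point. Second, one imprecision to repair: the displayed formula for $Q[u]$ is the nonparametric form of $2H_\varphi$ only when the common tangent plane at $p$ is orthogonal to $\vec{a}$, which nothing in the hypotheses guarantees; over a general tangent plane, with $\vec{a}=(a_1,a_2,a_3)$ in adapted coordinates, the operator to linearize is $\mathrm{div}(Du/\sqrt{1+|Du|^2})-\alpha\,(a_3-a_1u_x-a_2u_y)/(\sqrt{1+|Du|^2}\,\ell(x,y,u))$, where $\ell$ is the affine function expressing the height $\langle\phi,\vec{a}\rangle$, positive near $p$. Since this operator is still quasilinear, elliptic, and smooth in $(x,y,u,Du)$ on the region $\ell>0$, every step of your argument goes through unchanged, but it is this operator, not the literal right-hand side of (\ref{mean1}), that must be used.
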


If $M_1$ and $M_2$ are two singular minimal surfaces, the above  condition on the orientations at the common point can be dropped because $H_\varphi=0$ holds for any orientation.  Other consequence  of (\ref{mean1}) is that a  surface with constant weighted mean curvature   is real analytic and consequently, if two     surfaces with $H_\varphi=c$ coincide in an open set, they coincide everywhere. Given a singular minimal surface, the tangency principle allows to get information of its geometry by comparing with other singular minimal surfaces. Following this idea, we obtain the following result.

\begin{proposition}\label{prv} If $M$ is a compact singular minimal surface, then either  $M$ is a subset of a  plane parallel to the density vector $\vec{a}$ or  the function $\langle p,\vec{v}\rangle$   does not attain a global extrema at an interior point of $M$ for any  direction $\vec{v}$ orthogonal to $\vec{a}$. 
\end{proposition}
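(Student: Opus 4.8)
The plan is to exploit the fact that a plane containing the direction $\vec{a}$ is itself a singular minimal surface, and then to run a touching argument via the tangency principle. First I would observe that if $P$ is any plane parallel to $\vec{a}$, written as $P=\{p\in\r^3:\langle p,\vec{v}\rangle=c\}$ for a unit vector $\vec{v}$ orthogonal to $\vec{a}$ and a constant $c$, then the Gauss map of $P$ is the constant $\pm\vec{v}$, so $\langle N,\vec{a}\rangle=\pm\langle\vec{v},\vec{a}\rangle=0$, while $H=0$. Hence both sides of (\ref{mean}) vanish and $P$ satisfies the singular minimal surface equation; equivalently $H_\varphi=0$ on $P$.

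Next I would argue by contradiction. Suppose $M$ is not contained in any plane parallel to $\vec{a}$, yet for some horizontal direction $\vec{v}$ the function $h(p)=\langle p,\vec{v}\rangle$ attains a global extremum---say a maximum, the minimum case being symmetric---at an interior point $p_0\in M$. Put $c=h(p_0)$ and consider the plane $P=\{p:\langle p,\vec{v}\rangle=c\}$, which is parallel to $\vec{a}$. Since $p_0$ realizes the maximum of $h$ over $M$, the whole surface lies in the closed halfspace $\{h\leq c\}$ and touches $P$ precisely at $p_0$. Moreover, because $p_0$ is an interior critical point of the restriction $h|_M$, the gradient of $h$ must be normal to $M$ there; that is, $\vec{v}$ is a unit normal to $M$ at $p_0$, so $M$ and $P$ are tangent at $p_0$ with the same tangent plane and with coinciding Gauss maps up to sign.

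Then I would invoke the tangency principle with $M_1=P$ and $M_2=M$. Both are singular minimal surfaces, so $H_\varphi^1=H_\varphi^2=0$ in a neighborhood of $p_0$; in particular the inequality $H_\varphi^2\leq H_\varphi^1$ holds, and by the remark following the tangency principle the hypothesis on the coincidence of the orientations can be dropped, since $H_\varphi=0$ is insensitive to the choice of normal. As $M$ lies on one side of $P$ near $p_0$, the tangency principle forces $M=P$ in a neighborhood of $p_0$. Finally, using that surfaces with constant weighted mean curvature are real analytic---so that two such surfaces agreeing on an open set agree everywhere---I would propagate this local coincidence to conclude $M\subset P$, contradicting the assumption that $M$ lies in no plane parallel to $\vec{a}$.

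The step I expect to require the most care is the verification that the tangency principle applies cleanly at $p_0$: one must check that $\vec{v}$ is genuinely the surface normal there (so that the tangent planes agree) and fix the convention for ``lies above'' so that the one-sidedness of $M$ with respect to $P$ matches the direction of the chosen normal $N_1(p_0)$. Because both surfaces have $H_\varphi=0$, the comparison inequality holds in both directions and the orientation ambiguity is harmless, but the one-sided touching together with the equality of tangent planes at the interior extremum are exactly the hypotheses that need to be confirmed before the maximum principle can be applied. A secondary point is that the analytic continuation argument implicitly uses connectedness of $M$, which I would assume throughout, or else apply the conclusion to the connected component containing $p_0$.
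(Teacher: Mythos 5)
Your proof is correct and follows essentially the same route as the paper's: the plane $P=\{\langle q,\vec{v}\rangle=c\}$ you construct is exactly the affine tangent plane $T_{p_0}M$ used in the paper, and both arguments combine its being a singular minimal surface with one-sided touching, the tangency principle, and analytic continuation. Your extra remarks on orientation and connectedness are sound but are just explicit versions of points the paper leaves implicit.
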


\begin{proof}
 Let $p\in M$ an interior point where the distance function $\langle p,\vec{v}\rangle$ attains a global maximum (or minimum). Since $p$ is an interior point,  the affine tangent plane $T_pM$ is parallel to $\vec{a}$, hence $T_pM$ is a singular minimal surface. Since  $M$ lies in one side of $T_pM$ around the common point $p$, the tangency principle and the analyticity   implies that $M$ is a subset of   $T_pM$. \end{proof}

As a  consequence, if the boundary is contained in a plane, we  conclude (see also \cite[Th. 4]{bht}):

\begin{corollary}\label{prv2} Let  $M$ be a compact singular minimal  surface whose  boundary $\Gamma$ is contained in a plane $\Pi$.
\begin{enumerate}
\item If  $\Pi$ is parallel to   $\vec{a}$, then $M$ is contained in  $\Pi$.
\item If $\Pi$ is not parallel to $\vec{a}$, then $\mbox{int}(M)\subset \Omega\times\r\vec{a}$, where $\Omega\subset \r^3_0(\vec{a})$ is the bounded domain by the convex hull of  $\pi(\Gamma)$, and $\pi:\r^3\rightarrow \r^3_0(\vec{a})$ is the orthogonal projection.\end{enumerate}
\end{corollary}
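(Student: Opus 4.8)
The plan is to apply Proposition~\ref{prv} to the family of linear height functions $f_{\vec{v}}(p)=\langle p,\vec{v}\rangle$ attached to the horizontal directions $\vec{v}$ (those with $\vec{v}\perp\vec{a}$), using the elementary identity $f_{\vec{v}}(p)=\langle\pi(p),\vec{v}\rangle$, valid precisely because $\vec{v}\perp\vec{a}$, so that such functions only record the horizontal projection $\pi(p)$. I would first isolate an observation used in both items: $M$ cannot be contained in a plane $\Pi'$ parallel to $\vec{a}$ other than $\Pi$ itself. Indeed, if $M\subset\Pi'$ then $\Gamma=\partial M\subset\Pi\cap\Pi'$, and since $\Gamma$ is a closed curve it is not contained in a line; hence $\Pi\cap\Pi'$ cannot be a line, which forces $\Pi'=\Pi$. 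When $\Pi$ is \emph{not} parallel to $\vec{a}$ this rules out the alternative entirely, because a plane parallel to $\vec{a}$ and a plane not parallel to $\vec{a}$ meet in a line.

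For item (1), take a unit normal $\vec{v}$ of $\Pi$; since $\Pi\parallel\vec{a}$ we have $\vec{v}\perp\vec{a}$ and $\Pi=\{f_{\vec{v}}=c\}$ for some constant $c$. Proposition~\ref{prv} leaves two possibilities. If $M$ lies in a plane parallel to $\vec{a}$, the preliminary observation identifies this plane with $\Pi$, so $M\subset\Pi$. Otherwise $f_{\vec{v}}$ attains neither a global maximum nor a global minimum in $\mbox{int}(M)$, so both are attained on $\Gamma$, where $f_{\vec{v}}\equiv c$; thus $\max_M f_{\vec{v}}=\min_M f_{\vec{v}}=c$ and $f_{\vec{v}}\equiv c$ on $M$, i.e. $M\subset\Pi$.

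For item (2), the preliminary observation shows $M$ lies in no plane parallel to $\vec{a}$, so Proposition~\ref{prv} yields the stronger statement that for \emph{every} horizontal $\vec{v}$ the function $f_{\vec{v}}$ has no interior global extremum; being compact, $M$ then satisfies $\max_M f_{\vec{v}}=\max_\Gamma f_{\vec{v}}$. I would argue by contradiction: fix $q\in\mbox{int}(M)$ and assume $\pi(q)\notin\Omega$, the open region bounded by the convex hull $\overline{\Omega}$ of $\pi(\Gamma)$. Since $\pi(q)$ lies outside the interior of the convex set $\overline{\Omega}$, a supporting (if $\pi(q)\in\partial\Omega$) or separating (if $\pi(q)\notin\overline{\Omega}$) line \emph{in the plane} $\r^3_0(\vec{a})$ furnishes a horizontal unit vector $\vec{v}$ with $\langle\pi(q),\vec{v}\rangle\geq\langle x,\vec{v}\rangle$ for all $x\in\pi(\Gamma)$. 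Rewriting with $f_{\vec{v}}(p)=\langle\pi(p),\vec{v}\rangle$ gives $f_{\vec{v}}(q)\geq\max_\Gamma f_{\vec{v}}=\max_M f_{\vec{v}}$; since also $f_{\vec{v}}(q)\leq\max_M f_{\vec{v}}$, the interior point $q$ realizes the global maximum of $f_{\vec{v}}$, contradicting Proposition~\ref{prv}. Hence $\pi(q)\in\Omega$ for every interior point $q$, that is, $\mbox{int}(M)\subset\Omega\times\r\vec{a}$.

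The genuinely routine ingredients are the identity $f_{\vec{v}}(p)=\langle\pi(p),\vec{v}\rangle$ for $\vec{v}\perp\vec{a}$ and the fact that a closed curve is not contained in a line. The step needing the most care is the separation argument in item (2): one must take the supporting or separating hyperplane \emph{inside} the horizontal plane $\r^3_0(\vec{a})$ so that its normal $\vec{v}$ is genuinely horizontal (otherwise Proposition~\ref{prv} does not apply), and one must handle the two cases $\pi(q)\in\partial\Omega$ and $\pi(q)\notin\overline{\Omega}$ together, noting that in both the inequality $f_{\vec{v}}(q)\geq\max_\Gamma f_{\vec{v}}$ is all that is needed to produce the forbidden interior maximum.
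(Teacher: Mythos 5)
Your proof is correct and follows essentially the approach the paper intends: the paper states this corollary without proof as a direct consequence of Proposition~\ref{prv}, and your argument (ruling out the vertical-plane alternative via the boundary curve, then using horizontal linear functions together with supporting/separating lines in $\r^3_0(\vec{a})$ to force interior points into the convex hull's interior) is exactly the natural filling-in of that deduction. In particular, your handling of the case $\pi(q)\in\partial\Omega$ correctly exploits that Proposition~\ref{prv} forbids even non-strict attainment of a global extremum at an interior point, which is what yields the open inclusion $\mbox{int}(M)\subset\Omega\times\r\vec{a}$.
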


 For example, if $\vec{a}=(0,0,1)$ and $\Gamma$ is a convex curve contained in a horizontal plane, we deduce that $\mbox{int}(M)\subset\Omega\times\r$, where $\Omega\subset\r^2=\r^2\times\{0\}$ is the domain bounded by $\pi(\Gamma)$.
 
Similarly as in Proposition    \ref{prv}, we can compare a singular minimal surface with   planes orthogonal to $\vec{a}$.   Suppose that $p\in M$ is an interior  point where the height function $g(p)=\langle p,\vec{a}\rangle$ has a local maximum. Then the tangent plane $T_pM$ at $p$  lies locally above $M$ around $p$. We consider on $M$ the  orientation $N$   that at $p$ satisfies  $N(p)= \vec{a}$. If we orient $T_pM$ so its unit normal vector field is $\vec{a}$, then $T_pM$ lies above $M$ around the point $p$. Consequently, the weighted mean curvature    of $T_pM$ at $p$, namely,  $-\alpha/(2\langle p,\vec{a}\rangle)$, should be bigger than $H_\varphi(p)=0$.  Consequently,  we have proved:

\begin{proposition}\label{pr-1} Let $M$ be an $\alpha$-singular minimal surface. If $\alpha>0$ (resp. $\alpha<0$), then the height function $h$ defined on $M$ does not attain a local maximum (resp. a local minimum) at an interior point. In particular, if $M$ is a compact surface, then 
$$ \max\{\langle p,\vec{a}\rangle:p\in M\}=\max\{\langle p,\vec{a}\rangle:p\in\partial M\}\qquad (\mbox{case } \alpha>0),$$
$$\min\{\langle p,\vec{a}\rangle:p\in M\}=\min\{\langle p,\vec{a}\rangle:p\in\partial M\}\qquad (\mbox{case } \alpha<0).$$
\end{proposition}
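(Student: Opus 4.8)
The plan is to compare $M$, at a putative interior extremum of the height function, with the horizontal plane through that point, exactly in the spirit of Proposition~\ref{prv} but now using planes orthogonal to $\vec a$. Assume first $\alpha>0$ and suppose, for contradiction, that the height function $h(p)=\langle p,\vec a\rangle$ attains a local maximum at an interior point $p\in\mathrm{int}(M)$. Since $p$ is an interior critical point of $h|_M$, the differential of $h$ vanishes on $T_pM$, so $\vec a\perp T_pM$; that is, the affine tangent plane $\Pi=T_pM$ is the horizontal plane $\{q:\langle q,\vec a\rangle=c\}$ with $c=\langle p,\vec a\rangle$. Because $M\subset\r^3_+(\vec a)$ we have $c>0$, which I will need to get the sign of the comparison right. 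As $h$ has a local maximum at $p$, the surface $M$ lies locally below $\Pi$, and near $p$ it is a graph over $\Pi$.

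Next I compute and compare the weighted mean curvatures for the common upward orientation $N(p)=\vec a$. The surface $M$ is singular minimal, so $H_\varphi\equiv 0$ on $M$. For $\Pi$ oriented by $\vec a$ one has $H=0$, $\langle N,\vec a\rangle=1$ and $\langle\cdot,\vec a\rangle\equiv c$, whence $H_\varphi^\Pi=-\alpha/(2c)$, strictly negative since $\alpha>0$ and $c>0$. Taking $M_1=M$ and $M_2=\Pi$ in the tangency principle, the two surfaces are tangent at the interior point $p$ with coinciding Gauss maps, and $H_\varphi^2=-\alpha/(2c)\le 0=H_\varphi^1$ holds throughout a neighborhood of $p$. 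The principle then asserts that $\Pi$ cannot lie above $M$ near $p$ unless $M=\Pi$ there. But $\Pi$ does lie above $M$, and $M=\Pi$ on a neighborhood is impossible (a horizontal plane has $H_\varphi=-\alpha/(2c)\neq0$ and so is not singular minimal, whereas $H_\varphi\equiv0$ on $M$). This contradiction rules out an interior local maximum. The case $\alpha<0$ with a local minimum is entirely analogous: at an interior minimum $M$ lies above $\Pi$, now $H_\varphi^\Pi=-\alpha/(2c)>0$, and taking $M_1=\Pi$, $M_2=M$ the same comparison ($H_\varphi^2=0\le -\alpha/(2c)=H_\varphi^1$) yields a contradiction.

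An equivalent and perhaps more transparent argument bypasses the tangency principle: writing $M$ locally as a graph $z=u$ over the horizontal tangent plane with $Du(p)=0$, equation (\ref{mean1}) gives $2H(p)=\Delta u(p)=\alpha/c$, while at a local maximum the Hessian $D^2u(p)$ is negative semidefinite, so $\Delta u(p)\le0$, contradicting $\alpha/c>0$ for $\alpha>0$ (and symmetrically for $\alpha<0$). I would present the tangency-principle version, since it parallels Proposition~\ref{prv} and reuses the machinery already set up.

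Finally, the ``in particular'' statement is immediate: if $M$ is compact, the continuous function $h$ attains its maximum over $M$ at some point, which by the first part cannot be interior when $\alpha>0$, hence lies on $\partial M$; therefore $\max_M h=\max_{\partial M}h$, and symmetrically $\min_M h=\min_{\partial M}h$ when $\alpha<0$. The only delicate point in the whole argument is the bookkeeping of the orientation and of the direction of the inequality $H_\varphi^2\le H_\varphi^1$, so that the tangency principle produces a genuine contradiction rather than a vacuous conclusion; the structural input that makes every sign fall into place is the positivity $c=\langle p,\vec a\rangle>0$ coming from $M\subset\r^3_+(\vec a)$.
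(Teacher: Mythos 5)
Your proof is correct and follows essentially the same route as the paper: the paper also compares $M$ at a putative interior extremum with its horizontal tangent plane $T_pM$, oriented by $\vec a$, whose weighted mean curvature $-\alpha/(2\langle p,\vec a\rangle)$ has the wrong sign relative to $H_\varphi\equiv 0$, and invokes the tangency principle to reach a contradiction. Your careful bookkeeping of which surface plays the role of $M_1$ versus $M_2$ (and the alternative Hessian computation via equation (\ref{mean1})) only makes explicit what the paper leaves implicit.
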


This result asserts that if the boundary is contained in  a  plane $\Pi$ orthogonal to $\vec{a}$, then $M$  lies in one side of $\Pi$. More exactly,  if $\alpha>0$ (resp. $\alpha<0$), $M$ lies below (resp. above) $\Pi$ with respect to the direction   $\vec{a}$.

 We finish this section proving that there are no closed (compact without boundary) singular minimal surfaces. More general, we consider the case that the weighted mean curvature  is constant.

\begin{proposition}\label{p-not} There do not exist   closed surfaces with constant weighted mean curvature.
\end{proposition}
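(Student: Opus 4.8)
The plan is to reach a contradiction from two integral identities that hold on any closed oriented surface, combined with the defining equation. So let $M$ be a closed (compact, boundaryless) oriented surface, immersed by $\phi$ with unit normal $N$ and constant weighted mean curvature $H_\varphi\equiv c$. Since $M\subset\r^3_+(\vec{a})$, the height $\langle\phi,\vec{a}\rangle$ is strictly positive on $M$, so all the quotients appearing below are well defined. The whole point will be that, after integration, the unknown constant $c$ disappears and only the sign of $\alpha$ survives.

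First I would use the classical identity $\Delta_M\langle\phi,\vec{a}\rangle=2H\langle N,\vec{a}\rangle$, which follows from $\Delta_M\phi=2HN$ and the fact that $\vec{a}$ is a constant vector. Integrating over the closed surface and using that the integral of a Laplacian vanishes on a closed manifold gives
$$\int_M 2H\,\langle N,\vec{a}\rangle\,dM=0.$$
Next I would invoke the vanishing of the vector area of a closed surface, namely $\int_M N\,dM=0$, which holds because $N\,dM=\tfrac12\,d(\phi\times d\phi)$ is an exact vector-valued $2$-form and $\partial M=\emptyset$. In particular $\int_M\langle N,\vec{a}\rangle\,dM=0$.

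Now I would feed in the geometry. From $H_\varphi=H-\alpha\langle N,\vec{a}\rangle/(2\langle\phi,\vec{a}\rangle)\equiv c$ we get $2H=2c+\alpha\,\langle N,\vec{a}\rangle/\langle\phi,\vec{a}\rangle$. Substituting this into the first identity and discarding the term $2c\int_M\langle N,\vec{a}\rangle\,dM$, which is zero by the second identity, leaves
$$\alpha\int_M\frac{\langle N,\vec{a}\rangle^2}{\langle\phi,\vec{a}\rangle}\,dM=0.$$
Since $\alpha\neq0$, the integrand has a fixed sign, being a nonnegative numerator divided by a positive denominator, so the integral forces $\langle N,\vec{a}\rangle\equiv0$ on $M$. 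This yields the contradiction: if $\vec{a}$ is everywhere tangent to $M$, then $\nabla_M\langle\phi,\vec{a}\rangle=\vec{a}-\langle N,\vec{a}\rangle N=\vec{a}\neq0$ at every point, so the height function has no critical point, which is impossible on a compact surface.

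The two identities are routine; the step I expect to require the most care is the vanishing of $\int_M N\,dM$ in the merely immersed, possibly non-embedded, case, where one cannot apply the divergence theorem to an enclosed solid region and must instead rely on the exactness of $N\,dM$ as above. I would also verify explicitly that it is the cancellation of the $2c$ term which removes all dependence on the value of the constant, so that the single argument simultaneously covers the singular minimal case $c=0$ and every other constant, and works uniformly for all $\alpha\neq0$.
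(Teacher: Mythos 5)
Your proof is correct and follows essentially the same route as the paper: integrate the identity $\Delta\langle\phi,\vec{a}\rangle=2H\langle N,\vec{a}\rangle$ over the closed surface, cancel the $c$-term using $\int_M\langle N,\vec{a}\rangle\,dM=0$, and conclude $\langle N,\vec{a}\rangle\equiv 0$, which is absurd. The only cosmetic differences are that you justify the vanishing of $\int_M\langle N,\vec{a}\rangle\,dM$ via exactness of $N\,dM$ (the paper appeals to $\vec{a}$ being divergence free) and that you obtain the final contradiction from the height function having no critical point on a compact surface rather than from surjectivity of the Gauss map; these two endgames are essentially the same argument.
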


\begin{proof}
The proof is by contradiction. Suppose that $M$ is a closed surface with constant weighted mean curvature $H_\varphi=c$. Consider the height function $g:M\rightarrow\r$  defined by $g(p)=\langle p,\vec{a}\rangle$. The function $g$ satisfies 
\begin{equation}\label{lapla}
\Delta g=2H\langle N,\vec{a}\rangle,
\end{equation}
 where $\Delta$ is the Laplace-Beltrami operator on $M$ with respect to the induced metric. Notice that (\ref{lapla}) holds for any immersion $\phi$. It follows from     (\ref{mean}) and (\ref{lapla})  that
\begin{equation}\label{laplaciano}
\Delta g=2c\langle N,\vec{a}\rangle+\alpha\frac{\langle N,\vec{a}\rangle^2}{\langle p,\vec{a}\rangle}.
\end{equation}
An integration on $M$  of this identity and the divergence theorem yield
$$0=c\int_M \langle N,\vec{a}\rangle\ dM +\alpha\int_M \frac{\langle N,\vec{a}\rangle^2}{\langle p,\vec{a}\rangle}\ dM$$
because   $\partial M=\emptyset$. On the other hand,  $\int_M\langle N,\vec{a}\rangle\ dM=0$   in any a closed surface because the vector field $\vec{a}$ is divergence free. Then  
$$\alpha\int_M \frac{\langle N,\vec{a}\rangle^2}{\langle p,\vec{a}\rangle}\ dM=0.$$
Because $\alpha\not=0$ and   $\langle p,\vec{a}\rangle>0$ on $M$, we deduce that $\langle N,\vec{a}\rangle=0$ on $M$. This a contradiction because for a closed surface, the Gauss map $N$ is onto in the unit sphere $\s^2$. 
\end{proof}

As a consequence of Proposition    \ref{p-not}, any compact singular minimal surface $M$ has non-empty boundary $\partial M$. We denote the {\it interior} of $M$ as $\mbox{int}(M)=M\setminus \partial M$.

\begin{remark} \begin{enumerate}
\item For singular minimal surfaces,   Proposition \ref{p-not} may be proved by using Proposition  \ref{prv}   comparing the surface with vertical planes and applying the tangency principle. 
\item Proposition \ref{p-not} contrasts to what happens in the family of surfaces of $\r^3$ with constant mean curvature, where there exist many examples of closed surfaces with arbitrary genus.
\end{enumerate}
\end{remark}

 \section{Area estimates for singular minimal surfaces}\label{s-area}

 Concerning to  the Problem A, in this section we obtain a relation between the area  of a compact  singular minimal surface and its boundary. We will prove that  the area  can not be arbitrary large, in fact, its  area is less than some constant  depending on the boundary. First, we study the case  $\alpha\geq 1$. 

\begin{proposition}\label{p-a0} 
Let $\alpha\geq 1$. Let $\Gamma\subset \r^3_+(\vec{a})$ be a closed curve and set
\begin{equation}\label{cc}
C=L(\Gamma)\sup_{p\in\Gamma}\langle p,\vec{a}\rangle,
\end{equation}
where $L(\Gamma)$ is the length of $\Gamma$. Then the area $A(M)$ of any compact $\alpha$-singular minimal surface with boundary $\Gamma$ satisfies
\begin{equation}\label{am}
A(M)\leq C.
\end{equation}
 \end{proposition}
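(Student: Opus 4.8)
The plan is to exploit the height function $g(p)=\langle\phi(p),\vec{a}\rangle$ together with the Laplacian identity (\ref{laplaciano}), which for a singular minimal surface ($H_\varphi=0$, i.e.\ $c=0$) reads $\Delta g=\alpha\langle N,\vec{a}\rangle^2/g$. Since $\vec{a}$ decomposes along $M$ as $\vec{a}=\nabla g+\langle N,\vec{a}\rangle N$, where $\nabla$ denotes the intrinsic gradient, we have $|\nabla g|^2=1-\langle N,\vec{a}\rangle^2$, and in particular $|\nabla g|\leq 1$ everywhere on $M$. Multiplying the Laplacian identity by $g>0$ gives the pointwise relation $g\,\Delta g=\alpha(1-|\nabla g|^2)$.

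From here I would compute the Laplacian of $g^2$, namely
$$\Delta(g^2)=2g\,\Delta g+2|\nabla g|^2=2\alpha-2(\alpha-1)|\nabla g|^2.$$
Integrating this identity over the compact surface $M$ and applying the divergence theorem, with $\nu$ the outward unit conormal along $\partial M$, yields
$$\alpha\,A(M)-(\alpha-1)\int_M|\nabla g|^2\,dM=\int_{\partial M}g\,\langle\nabla g,\nu\rangle\,ds.$$

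It then remains to estimate both sides. Along $\partial M$ (which $\phi$ maps diffeomorphically onto $\Gamma$), the Cauchy--Schwarz inequality together with $|\nabla g|\leq 1$ gives $\langle\nabla g,\nu\rangle\leq 1$, so the boundary integral is bounded above by $\int_{\partial M}g\,ds\leq L(\Gamma)\sup_{p\in\Gamma}\langle p,\vec{a}\rangle=C$. For the interior term, this is precisely where the hypothesis $\alpha\geq 1$ enters: since $\alpha-1\geq 0$ and $|\nabla g|^2\leq 1$, we have $(\alpha-1)\int_M|\nabla g|^2\,dM\leq(\alpha-1)A(M)$. Combining the two bounds,
$$\alpha\,A(M)\leq C+(\alpha-1)A(M),$$
and cancelling $(\alpha-1)A(M)$ from both sides gives $A(M)\leq C$, as claimed.

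The main obstacle, and the reason for restricting to $\alpha\geq 1$, is controlling the sign of the interior term $(\alpha-1)\int_M|\nabla g|^2\,dM$: for $\alpha\geq 1$ it carries the favorable sign and is absorbed into $\alpha A(M)$, whereas for $\alpha<1$ the inequality reverses and this argument breaks down, so that range must be handled by a separate comparison.
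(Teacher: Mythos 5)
Your proposal is correct and follows essentially the same route as the paper: both compute $\Delta(g^2)=2g\,\Delta g+2|\nabla g|^2$ for the height function $g=\langle\phi,\vec{a}\rangle$, integrate, apply the divergence theorem, bound the boundary term by $C$, and use $\alpha\geq 1$ to control the interior term (your pointwise identity $2\alpha-2(\alpha-1)|\nabla g|^2$ is the paper's $2+2(\alpha-1)\langle N,\vec{a}\rangle^2$ rewritten via $|\nabla g|^2=1-\langle N,\vec{a}\rangle^2$). The only cosmetic differences are your use of the outward rather than inward conormal and absorbing $(\alpha-1)\int_M|\nabla g|^2\,dM$ into the right-hand side instead of dropping it pointwise.
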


\begin{proof}
We compute $\Delta g^2$ for the function   $g(p)=\langle p,\vec{a}\rangle$. The gradient of $g$ is $\nabla g(p)=\vec{a}^T$, where $\vec{a}^T$ is the tangent part of $\vec{a}$ on $T_p M$. Thus $|\nabla g|^2=1-\langle N,\vec{a}\rangle^2$, and from (\ref{lapla}),  
$$\Delta g^2=2g\Delta g+2|\nabla g|^2=2+2(\alpha-1)\langle N,\vec{a}\rangle^2.$$
Since  $\alpha\geq 1$,  the divergence theorem implies
\begin{equation}\label{pnu}
-\int_{\partial M} \langle p,\vec{a}\rangle\langle\nu,\vec{a}\rangle ds=\int_M\left(1+(\alpha-1)\langle N,\vec{a}\rangle^2\right)dM\geq  A(M),
\end{equation}
where $A(M)$ is the area of $M$ and $\nu$ is the unit inner conormal vector along $\partial M$.   Finally, the left hand side of the above inequality is bounded by the constant $C$ of (\ref{cc}), proving (\ref{am}).
\end{proof}

Notice  that the constant $\alpha$ does not appear in the estimate (\ref{am}). 

If  $\Gamma$ is a simple closed curve in a plane $\Pi$ orthogonal to $\vec{a}$, then $A(M)$ is bigger than the area of the planar domain bounded by $\Gamma$ in $\Pi$. Thus  we conclude:
 
 \begin{corollary}\label{c-cc} Let $\Gamma$ be a  simple closed curve contained in the plane $\Pi$ of equation $\langle p,\vec{a}\rangle=c>0$ and denote by $|\Omega|$ the area of the domain $\Omega\subset\Pi$ bounded by  $\Gamma$. If $\alpha\geq 1$, a necessary condition for the existence of a compact $\alpha$-singular minimal surface with boundary $\Gamma$ is $$|\Omega|\leq c L(\Gamma).$$
 \end{corollary}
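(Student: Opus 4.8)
The plan is to derive this as an immediate consequence of Proposition \ref{p-a0}, since the hypotheses are designed precisely so that the general area estimate $A(M)\le C$ becomes an inequality involving only the enclosed area $|\Omega|$ and the constant $cL(\Gamma)$. First I would note that because $\Gamma$ lies in the plane $\Pi=\{\langle p,\vec{a}\rangle=c\}$, the supremum in the definition \eqref{cc} of $C$ is trivially $\sup_{p\in\Gamma}\langle p,\vec{a}\rangle=c$, so that $C=cL(\Gamma)$. Thus Proposition \ref{p-a0} gives directly
\begin{equation*}
A(M)\le cL(\Gamma).
\end{equation*}

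The key remaining step is to establish the lower bound $A(M)\ge|\Omega|$, and combining the two inequalities yields $|\Omega|\le cL(\Gamma)$. The geometric idea is that since $\Gamma$ bounds the planar region $\Omega$ in $\Pi$, the surface $M$ projects onto $\Omega$ (or onto a region containing it), and projection onto a plane does not increase area. More precisely, I would use the orthogonal projection $\pi:\r^3\to\Pi$ and argue that $\pi(M)\supseteq\Omega$: by Proposition \ref{pr-1}, $M$ lies entirely on one side of the plane $\Pi$ (below it if $\alpha>0$, above it if $\alpha<0$), with $\partial M=\Gamma\subset\Pi$, so $M\cup\Omega$ is a closed surface (a topological sphere when $M$ is a disk) whose projection covers $\Omega$. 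Since $\pi$ is $1$-Lipschitz, the area of $M$ is at least the area of $\pi(M)\supseteq\Omega$, giving $A(M)\ge|\Omega|$.

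The main obstacle I anticipate is making the inequality $A(M)\ge|\Omega|$ fully rigorous in the immersed (non-embedded) case, since then $\pi(M)$ need not be a graph over $\Omega$ and the projection could fold or wrap. In the embedded case the argument is clean: $M$ and $\Omega$ together bound a compact region, and a standard application of the divergence theorem to a constant horizontal vector field, or simply the area-decreasing property of orthogonal projection applied to the graph over $\Omega$, delivers the bound. For the general immersed case one would instead invoke that $M\cup\Omega$ is a cycle and compare the flux of the constant field $\vec{a}$ through $M$ with that through $\Omega$, which equals $|\Omega|$; the pointwise bound $|\langle N,\vec{a}\rangle|\le1$ then forces $A(M)\ge|\Omega|$. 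Since the corollary is stated as a necessary condition for existence, it suffices to treat the natural embedded disk situation, where the projection argument is straightforward and no delicate multiplicity issues arise.
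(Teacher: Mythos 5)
Your proof is correct and follows the paper's route exactly: specialize Proposition \ref{p-a0} (noting $\sup_{p\in\Gamma}\langle p,\vec{a}\rangle=c$, so $C=cL(\Gamma)$) and combine with the lower bound $A(M)\ge|\Omega|$. The paper merely asserts this lower bound without justification, whereas your flux argument --- that $\langle N,\vec{a}\rangle\, dM$ is an exact $2$-form, so $\int_M\langle N,\vec{a}\rangle\, dM=\pm|\Omega|$ by Stokes' theorem, and then $|\Omega|\le\int_M|\langle N,\vec{a}\rangle|\,dM\le A(M)$ --- supplies a complete proof that also covers the immersed case, so no gap remains.
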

 This inequality can be viewed as {\it a priori} condition for the existence of an $\alpha$-singular minimal surface spanning a given boundary curve $\Gamma$, because it links the position of $\Pi$,   that is, the value $c$, and the size of $\Gamma$.

 When $0<\alpha<1$,  we give estimates of the area for   graphs. 
 
 \begin{proposition}\label{pr-alpha} Let $\alpha\in (0,1)$. Let $\Gamma\subset\r^3_+(\vec{a})$ be a simple closed curve contained in the plane $\Pi$ of equation $\langle p,\vec{a}\rangle=c>0$.  If $M$ is a compact $\alpha$-singular minimal graph on $\Pi$ with boundary $\Gamma$, then 
 \begin{equation}\label{aam}
A(M)\leq cL(\Gamma)+(1-\alpha)|\Omega|,
\end{equation}
where $\Omega\subset\Pi$ is the domain bounded by $\Gamma$. As a consequence, a necessary condition for the existence of a compact $\alpha$-singular minimal graph on $\Omega\subset\Pi$ with boundary $\Gamma$ is $$ |\Omega|\leq \frac{c}{\alpha} L(\Gamma).$$
\end{proposition}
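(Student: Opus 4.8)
The plan is to reuse the exact integral identity behind Proposition \ref{p-a0} rather than the inequality extracted from it, and then to use the graph hypothesis to control the extra term that survives when $\alpha<1$. The computation of $\Delta g^2$ for $g(p)=\langle p,\vec a\rangle$ relies only on equation (\ref{mean}) and not on the sign of $\alpha$, so the identity in (\ref{pnu}) holds as an \emph{equality} for every $\alpha$. Solving it for the area gives
\begin{equation}\label{plan-key}
A(M)=-\int_{\partial M}\langle p,\vec a\rangle\langle\nu,\vec a\rangle\,ds+(1-\alpha)\int_M\langle N,\vec a\rangle^2\,dM.
\end{equation}
For $\alpha\geq 1$ the last term is simply discarded; the point now is that for $0<\alpha<1$ it is positive, so it must instead be bounded from above.

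First I would bound the boundary term. Since $\Gamma=\partial M\subset\Pi$ lies in the plane $\langle p,\vec a\rangle=c$, this term equals $-c\int_{\partial M}\langle\nu,\vec a\rangle\,ds$, and because $\nu$ and $\vec a$ are unit vectors we have $-\langle\nu,\vec a\rangle\leq|\langle\nu,\vec a\rangle|\leq 1$, whence
$$-\int_{\partial M}\langle p,\vec a\rangle\langle\nu,\vec a\rangle\,ds\leq c\,L(\Gamma).$$

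The crux is the second term, and this is exactly where the graph hypothesis enters. Writing $M$ as a graph over the domain $\Omega\subset\Pi$ in the direction $\vec a$, the orthogonal projection onto $\Pi$ restricts to a diffeomorphism $M\to\Omega$, and $\langle N,\vec a\rangle\,dM$ is precisely the pullback of the planar area element $d\Omega$ of $\Pi$ (for the graph orientation). Hence, using $0\leq\langle N,\vec a\rangle\leq 1$,
$$\int_M\langle N,\vec a\rangle^2\,dM=\int_\Omega\langle N,\vec a\rangle\,d\Omega\leq\int_\Omega d\Omega=|\Omega|.$$
Substituting both bounds into (\ref{plan-key}) yields $A(M)\leq c\,L(\Gamma)+(1-\alpha)|\Omega|$, which is (\ref{aam}). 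I expect this projection estimate to be the only genuinely new step and the main obstacle: for a general compact surface one could only bound $\int_M\langle N,\vec a\rangle^2\,dM$ by $A(M)$, which would be circular, whereas the graph assumption replaces $A(M)$ by the smaller projected area $|\Omega|$.

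Finally, for the necessary condition I would combine (\ref{aam}) with the elementary fact that a graph has area at least that of its projection, $A(M)\geq|\Omega|$. This gives $|\Omega|\leq c\,L(\Gamma)+(1-\alpha)|\Omega|$, and isolating $|\Omega|$ produces $\alpha|\Omega|\leq c\,L(\Gamma)$, that is, $|\Omega|\leq(c/\alpha)L(\Gamma)$.
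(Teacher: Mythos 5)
Your proof is correct, and its skeleton matches the paper's: both start from the equality form of (\ref{pnu}), bound the boundary term by $cL(\Gamma)$ via $-\langle\nu,\vec a\rangle\le 1$, and use the graph hypothesis to bound the surviving term $(1-\alpha)\int_M\langle N,\vec a\rangle^2\,dM$ by $(1-\alpha)|\Omega|$. The one genuine difference is how $|\Omega|$ enters. The paper first invokes Proposition \ref{pr-1} to place $M$ below $\Pi$, so that $M\cup\Omega$ bounds a $3$-domain $\mathcal{D}$, and then applies the divergence theorem to the constant field $\vec a$ on $\mathcal{D}$ to get $\int_M\langle N,\vec a\rangle\,dM=|\Omega|$, after which $\langle N,\vec a\rangle^2\le\langle N,\vec a\rangle$ finishes the estimate. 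You instead read the same fact off the graph parametrization directly: $\langle N,\vec a\rangle\,dM$ is the pullback of the area element of $\Pi$, so $\int_M\langle N,\vec a\rangle^2\,dM=\int_\Omega\langle N,\vec a\rangle\,d\Omega\le|\Omega|$. Your route is more elementary and self-contained — it never needs Proposition \ref{pr-1}, the one-sidedness of $M$, or the enclosed-domain construction, only the graph hypothesis itself — whereas the paper's divergence-theorem device is the one that survives when ``graph'' is weakened to ``embedded and lying on one side of the boundary plane,'' which is exactly how the author reuses it in the area estimate for $\alpha<0$ that follows. Your derivation of the final consequence, combining (\ref{aam}) with $A(M)\ge|\Omega|$ to isolate $\alpha|\Omega|\le cL(\Gamma)$, is the step the paper leaves implicit, and it is the intended argument.
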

 
 \begin{proof}  By Proposition    \ref{pr-1}, we know that $M$ lies below the plane $\Pi$. Thus $M$ and $\Omega$ are two  embedded compact surfaces with the same boundary, namely, $\Gamma$. This implies that $M\cup\Omega$ bounds a $3$-domain   $\mathcal{D}\subset\r^3$ whose boundary is $M\cup\Omega$,  possibly not smooth along $\Gamma$. We choose the orientation on  $\partial \mathcal{D}$   pointing inside $\mathcal{D}$ and denote $N$ and $N_{\Omega}$ the induced orientations on $M$ and $\Omega$, respectively. Since $M$ lies below $\Pi$ and $M$ is a graph, the unit normal vector field $N$ points upwards, hence $\langle N,\vec{a}\rangle >0$ and $N_{\Omega}=-\vec{a}$. As the divergence of the vector field $\vec{a}$   is $0$, the divergence theorem in $M\cup\Omega$ gives 
 $$\int_M\langle N,\vec{a}\rangle dM=-\int_{\Omega}\langle N_{\Omega},\vec{a}\rangle=|\Omega|.$$ 
 
 On the other hand, it holds $\langle N,\vec{a}\rangle^2\leq\langle N,\vec{a}\rangle$ because $M$ is a graph on $\Pi$ and $N$ points upwards. By using $\alpha\in (0,1)$ and  (\ref{pnu}), we conclude
 $$-c\int_{\Gamma}\langle\nu,\vec{a}\rangle\geq A(M)+(\alpha-1)|\Omega|.$$
 Finally,   $-\int_\Gamma\langle\nu,\vec{a}\rangle ds\leq L(\Gamma)$, and (\ref{aam}) is proved. 
 
 \end{proof}

When $\alpha$ is negative, we derive a lower estimate of the area for singular minimal graphs.
  
 \begin{theorem}  Let $\alpha<0$. Let $\Pi$ be the plane of equation   $ \langle p,\vec{a}\rangle=c>0$. Let $M$ be a compact $\alpha$-singular minimal graph on $\Pi$ with   $\partial M\subset\Pi$. If $h$ denotes the height of $M$ with respect to $\Pi$, $h=\max_{M}\langle p,\vec{a}\rangle$, then 
\begin{equation}\label{areaestimate}
A(M)\geq -\frac{2\pi}{\alpha}(h^2-c^2).
\end{equation}
 \end{theorem}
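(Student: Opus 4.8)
The plan is to pass to the nonparametric description and to combine the equation with the total curvature of the horizontal slices, which is what produces the factor $2\pi$.

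\emph{Setup.} Since $\alpha<0$, Proposition~\ref{pr-1} gives $\min_M\langle p,\vec a\rangle=c$, attained on $\partial M$, so $M$ lies above $\Pi$. With $\vec a=(0,0,1)$ I write $M$ as a graph $z=u(x,y)$ over the domain $\Omega\subset\Pi$ enclosed by $\Gamma$, so that $u=c$ on $\partial\Omega$, $u\ge c$ in $\Omega$, and $h=\max_\Omega u$ is attained at an interior point; the level curves $\gamma_t=\{u=t\}$, $c<t<h$, foliate this cap.

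\emph{Where $2\pi$ and $h^2-c^2$ come from.} For a regular value $t$ the curve $\gamma_t$ is a finite union of simple closed plane curves, so the turning tangent theorem gives $\oint_{\gamma_t}\kappa\,ds\ge 2\pi$, where $\kappa$ is the signed curvature of $\gamma_t$. Multiplying by the height $t$, integrating in $t$, and using the coarea formula (so that $|Du|\,dx\,dy$ corresponds to $ds\,dt$ along the slices), one obtains
\[
\int_\Omega u\,\kappa\,|Du|\,dx\,dy\ \ge\ 2\pi\int_c^h t\,dt=\pi(h^2-c^2).
\]
Setting $W=\sqrt{1+|Du|^2}$ and $\eta=Du/|Du|$, and decomposing the divergence operator in (\ref{mean1}) into its components tangent and normal to $\gamma_t$, the equation reads
\[
|Du|\,\kappa=\frac{\partial_\eta|Du|}{W^{2}}-\frac{\alpha}{u}=\partial_\eta\!\left(\arctan|Du|\right)-\frac{\alpha}{u}.
\]
Multiplying this identity by $u$ and integrating, the previous inequality becomes
\[
\int_\Omega u\,\partial_\eta\!\left(\arctan|Du|\right)dx\,dy+(-\alpha)\,|\Omega|\ \ge\ \pi(h^2-c^2).
\]

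\emph{Reduction.} Writing $A(M)=\int_\Omega W\,dx\,dy$ and $Q=\int_\Omega u\,\partial_\eta(\arctan|Du|)\,dx\,dy$, the last display gives $\frac{2\pi}{-\alpha}(h^2-c^2)\le \frac{2}{-\alpha}Q+2|\Omega|$, so it suffices to prove $A(M)-2|\Omega|\ge\frac{2}{-\alpha}Q$. I stress that here the full integrand $W$ must be retained: replacing $W$ by $|Du|$ (which would reduce $A(M)$ to $\int_c^h \mathrm{length}(\gamma_t)\,dt$) loses too much, since already for the totally geodesic hemisphere, the model case $\alpha=-2$, one has $\int_c^h\mathrm{length}(\gamma_t)\,dt<\frac{2\pi}{-\alpha}(h^2-c^2)$ when $\Gamma$ sits near mid-height.

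\emph{Main obstacle.} The crux is the term $Q$, which records the variation of the inclination angle $\arctan|Du|$ along the gradient lines of $u$. It cannot be controlled slicewise—the naive pointwise bound $\mathrm{length}(\gamma_t)\ge \frac{4\pi t}{-\alpha}$ fails near the top of the hemisphere—so a genuinely global argument is required. I would integrate $Q$ by parts along the gradient flow of $u$, from $\partial\Omega$ (where $\arctan|Du|>0$) to the interior maximum (where it vanishes), use $u\ge c>0$, and check that the boundary contribution together with the sign of the remaining interior integral combine, through $\alpha<0$, to yield $A(M)-2|\Omega|\ge\frac{2}{-\alpha}Q$. Making this rigorous, in particular the behaviour at the interior maximum where $|Du|=0$, is the heart of the proof.
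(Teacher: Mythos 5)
Your proposal is not a complete proof: it reduces the theorem to the inequality $A(M)-2|\Omega|\ge\tfrac{2}{-\alpha}Q$, where $Q=\int_\Omega u\,\partial_\eta(\arctan|Du|)\,dx\,dy$, and then stops, offering only a plan (``integrate by parts along the gradient flow \dots{} check that the boundary contribution together with the sign of the remaining interior integral combine''). That step is exactly where all the difficulty sits. Note that $A(M)-2|\Omega|$ is typically \emph{negative} (for the unit hemisphere with $\alpha=-2$ over the plane $z=c$ it equals $-2\pi c(1-c)$, while $Q=-\pi(1-c^2)$), so you are not asking for a crude bound on $Q$: you must prove that $Q$ is negative by a precisely quantified amount, a sign-sensitive global estimate for which no mechanism is given. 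You correctly observe that slicewise bounds fail near the top of the surface, which shows the needed inequality is genuinely global; but identifying an obstacle is not the same as overcoming it. There is also a secondary error: the claim $\oint_{\gamma_t}\kappa\,ds\ge 2\pi$ for ``a finite union of simple closed curves'' is false as stated, since with $\kappa$ the signed curvature relative to $\eta=Du/|Du|$ one has $\oint_{\gamma_t}\kappa\,ds=2\pi\chi(\Omega_t)$, $\Omega_t=\{u\ge t\}$, and nested components contribute with opposite signs (an annular superlevel set would give $0$). The bound does hold here, but only because for $\alpha<0$ the height function has no interior local minimum (Proposition \ref{pr-1}), so every component of $\Omega_t$ is simply connected; your argument omits this.

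For comparison, the paper's proof never produces the angle term $Q$ at all. It works with $A(t)=\mbox{area}(M_t)$, $M_t=\{g\ge t\}$, and combines four ingredients: the coarea formula $-A'(t)=\int_{\Gamma(t)}|\nabla g|^{-1}ds_t$; the Cauchy--Schwarz inequality $L(t)^2\le -A'(t)\int_{\Gamma(t)}|\nabla g|\,ds_t$; the divergence theorem applied to (\ref{lapla}) on $M_t$, which evaluates the flux as $\int_{\Gamma(t)}\langle\nu^t,e_3\rangle ds_t=-\alpha\int_{M_t}\langle N,e_3\rangle^2\langle p,e_3\rangle^{-1}dM\le\tfrac{-\alpha}{t}|\Omega(t)|$; and the isoperimetric inequality $L(t)^2\ge 4\pi|\Omega(t)|$. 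Both $L(t)^2$ and $|\Omega(t)|$ then cancel, leaving the differential inequality $A'(t)\le 4\pi t/\alpha$, which integrates from $c$ to $h$ to give (\ref{areaestimate}). The two ideas your route misses are (i) estimating $-A'(t)$, which carries the factor $|\nabla g|^{-1}\ge 1$, rather than the slice length $L(t)$ --- this is precisely what survives near the top where your pointwise bound fails --- and (ii) computing the flux across level curves by the divergence theorem on $M_t$, instead of using the pointwise curvature decomposition of (\ref{mean1}), which is what generates the uncontrollable term $Q$ in the first place.
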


 \begin{proof}    After a change of coordinates, we suppose that   $\vec{a}=e_3=(0,0,1)$.  Denote $\Pi(t)$ the plane of equation $z=t$, in particular,  $\Pi=\Pi(c)$. Since $\alpha<0$, we know by Proposition    \ref{pr-1} that $M$ lies above the plane $\Pi$. Let $g:M\rightarrow \r$ be the height function $g(p)=\langle p,e_3\rangle$. For each $t\geq c$, let $A(t)$ be the area of $M_t=\{p\in M: g(p)\geq t\}$ and let $\Gamma(t)$  denote the level set $\Gamma(t)=\{p\in M: g(p)=t\}$. By the coarea formula (\cite[Th. 5.8]{sa}), we have 
 $$A'(t)=-\int_{\Gamma(t)}\frac{1}{|\nabla g|} ds_t,\quad t\in\mathcal{O},$$
 where $ds_t$ is the line element of $\Gamma(t)$ and $\mathcal{O}$ is the set of all regular values of $g$. If   $L(t)$ is the length of  $\Gamma(t)$, the Schwarz inequality yields
\begin{equation}\label{lt}
L(t)^2\leq\int_{\Gamma(t)}|\nabla g|ds_t\int_{\Gamma(t)}\frac{1}{|\nabla g|}ds_t=-A'(t)\int_{\Gamma(t)}|\nabla g|ds_t.
 \end{equation}
 Along the curve $\Gamma(t)$, and since $M_t$ is above the plane $\Pi(t)$,
 $$|\nabla g|=\langle\nu^t,\nabla g\rangle=\langle\nu^t,e_3\rangle\geq 0,$$
 where $\nu^t$ is the unit inner conormal vector of $M_t$ along $\Gamma(t)$. Then (\ref{lt}) becomes  \begin{equation}\label{a1}
 L(t)^2\leq -A'(t)\int_{\Gamma(t)}\langle\nu^t,e_3\rangle ds_t.
 \end{equation}
Since $M$ is a graph, we orient $M$ with the unit normal vector field $N$  that satisfies   $\langle N,e_3\rangle> 0$ on $M$. From   (\ref{lapla}) and $\alpha<0$, it follows that 
\begin{equation}\label{a2}
\int_{\Gamma(t)}\langle\nu^t,e_3\rangle ds_t=-\alpha\int_{M_t}\frac{\langle N,e_3\rangle^2}{\langle p,e_3\rangle}dM\leq -\frac{\alpha}{t}\int_{M_t} \langle N,e_3\rangle dM,
\end{equation}
where we use that   $\langle p,e_3\rangle\geq t$ (Proposition    \ref{pr-1}) and       $\langle N,e_3\rangle^2\leq \langle N,e_3\rangle$.

We denote $\Omega(t)$ the compact surface in $\Pi(t)$ whose boundary is $\Gamma(t)$. Then $\Omega(t)$ and $M_t$ are two    embedded  compact surfaces in $\r^3$ such that their boundaries coincide. Denote by $\mathcal{D}(t)\subset\r^3$ the orientable $3$-domain that bounds $\Omega(t)\cup M_t$, possibly non-smooth  along $\Gamma(t)$. Since the orientation $N$ on $M$ points upwards,    $N$ points outside $\mathcal{D}(t)$. Let $N_{\Omega(t)}$ be the orientation on $\Omega(t)$ pointing outside $\mathcal{D}(t)$, that is, $N_{\Omega(t)}=-e_3$. If follows from the divergence theorem  
$$\int_{M_t}\langle N,e_3\rangle dM=-\int_{\Omega(t)}\langle N_{\Omega(t)},e_3\rangle=| \Omega(t)|.$$
Combining this identity with (\ref{a1}) and (\ref{a2}), we deduce
\begin{equation}\label{a3}
L(t)^2\leq \frac{\alpha}{t} A'(t)| \Omega(t)|.
\end{equation}
We decompose $\Omega(t)=\cup_{i=1}^{n_t}\Omega_i(t)$, where $\Omega_1(t),\ldots,\Omega_{n_t}(t)$ are bounded domains in $\Pi(t)$ which are determined by the closed curve $\Gamma(t)$. If $L_i(t)$ is the length of the boundary of $\Omega_i(t)$, we have 
$L(t)=\sum_{i=1}^{n_t}L_i(t)$. The   isoperimetric inequality for each domain $\Omega_i(t)$ leads to  
$$L(t)^2\geq\sum_{i=1}^{n_t}L_i(t)^2\geq 4\pi\sum_{i=1}^{n_t}| \Omega_i(t)| =4\pi| \Omega(t)|.$$
We conclude from this inequality and  (\ref{a3})
$$ A'(t)\leq  \frac{4\pi t}{\alpha},$$
because $\alpha<0$. Integrating this inequality from $t=c$ to $t=h$, we obtain the desired inequality (\ref{areaestimate}).
 \end{proof}


\section{A height estimate for rotational surfaces}\label{s-height}

 Related with the Problem A, in this section we address to the question what type of {\it a priori} information can be obtained about the height of an $\alpha$-singular minimal surface $M$ in terms of its boundary. An interesting question is to estimate how far lies $M$ from the plane $\r^3_{0}(\vec{a})$. In this section we consider compact rotational surfaces about the $z$-axis with boundary a circle and we shall give an upper bound of the height at every point of $M$ in terms of the boundary of $M$. 

After a  change of coordinates, suppose that the density vector is $\vec{a}=(0,0,1)$. 
Let $\alpha>0$. Consider $M$ a rotational $\alpha$-singular minimal surface about the $z$-axis and  intersecting  $z$-axis. Then $M$  parametrizes as $X(x,\theta)=(x\cos\theta,x\sin\theta,f(x))$ and $f:[0,\infty)\rightarrow\r^+$ satisfies:  
\begin{eqnarray}
&& \frac{f''(x)}{1+f'(x)^2}+\frac{f'(x)}{x}=\frac{\alpha}{f(x)},\label{rot1}\\
&& f(0)=z_0, f'(0)=0.\label{rot2}
\end{eqnarray}
Among the properties of $f$, we notice that the graphic of $z=f(x)$ is asymptotic to the straight-line of equation $z=\sqrt{\alpha}x$ and lies above the line $z=\sqrt{\alpha/2}x$  (\cite{ke}). Fix $r>0$ and let $S_r$ be the part of $M$  obtained by rotating $z=f(x)$ when the domain is the interval $[0,r]$.  Then $S_r$ is a compact $\alpha$-singular minimal surface with boundary    the circle $C_r=\{(r\cos\theta,r\sin\theta,f(r)):\theta\in[0,2\pi]\}$.

\begin{theorem}\label{t-esti} Let $\alpha>0$. If $f=f(x)$ is a solution of (\ref{rot1})-(\ref{rot2}), then  
\begin{equation}\label{esti}
f(x)< \left(\frac{\alpha+2}{\alpha}\right)f(r)-\sqrt{ r^2+\frac{4f(r)^2}{\alpha^2}-x^2},
\end{equation}
for all $0\leq x< r$ and $r>0$. As a consequence, the lowest point  of $f$, namely $f(0)$, satisfies
\begin{equation}\label{esti2}
f(0)<  \left(\frac{\alpha+2}{\alpha}\right)f(r)- \sqrt{  r^2+\frac{4f(r)^2}{\alpha^2}}
\end{equation}
for all $r>0$.
\end{theorem}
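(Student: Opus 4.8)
The plan is to recognize the right-hand side of (\ref{esti}) as a hemisphere and to trap $S_r$ below it with the tangency principle. Writing $z_c=\frac{\alpha+2}{\alpha}f(r)$ and $R=\sqrt{r^2+4f(r)^2/\alpha^2}$, the claimed bound reads $f(x)<z_c-\sqrt{R^2-x^2}$, that is, the profile lies strictly below the lower hemisphere of the sphere $\Sigma$ of centre $(0,0,z_c)$ and radius $R$. A direct check shows $\Sigma$ passes through the boundary circle $C_r$: since $z_c-f(r)=\frac{2}{\alpha}f(r)$ one has $r^2+(z_c-f(r))^2=R^2$, and at $x=r$ the right-hand side of (\ref{esti}) equals exactly $f(r)$. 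Thus the two surfaces meet along $C_r$, and the assertion is that over the open disc the surface hangs strictly below this hemisphere, which is precisely (\ref{esti}); the value $x=0$ then yields (\ref{esti2}).

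First I would compute the weighted mean curvature of the lower hemisphere $z=\psi(x,y)=z_c-\sqrt{R^2-\rho^2}$, $\rho^2=x^2+y^2$. Orienting it upwards and inserting it into the operator of (\ref{mean1}), using $\mbox{div}(D\psi/\sqrt{1+|D\psi|^2})=2/R$ and $\sqrt{1+|D\psi|^2}=R/\sqrt{R^2-\rho^2}$, a short computation gives
\begin{equation*}
2H_\varphi^{\Sigma}=\frac{2}{R}-\frac{\alpha}{\psi\sqrt{1+|D\psi|^2}}=\frac{1}{R}\cdot\frac{2z_c-(2+\alpha)\sqrt{R^2-\rho^2}}{z_c-\sqrt{R^2-\rho^2}}.
\end{equation*}
Because $\sqrt{R^2-r^2}=2f(r)/\alpha$ makes the numerator vanish exactly at $\rho=r$, while the numerator is increasing in $\rho$ and the denominator stays positive, one gets $H_\varphi^{\Sigma}\le 0$ on $\{\rho\le r\}$, with equality only along $C_r$. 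Hence the hemisphere is a barrier of nonpositive weighted mean curvature that agrees with $S_r$ (where $H_\varphi=0$) along the common boundary.

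The comparison itself I would run by the Alexandrov-type sliding method rather than a plain maximum principle, because the zeroth-order term of (\ref{mean1}) has the \emph{wrong} sign for $\alpha>0$: the operator is increasing in the height variable, so an interior maximum of $f-\psi$ at a positive value is not excluded and a naive comparison of the two graphs fails. To circumvent this I would use the one–parameter family of dilations $\delta_\lambda$ centred at the origin, a point of $\r^3_0(\vec a)$, which therefore preserve (\ref{mean}) and the sign of $H_\varphi$. Invoking the estimate $f(x)>\sqrt{\alpha/2}\,x$ recalled above gives $z_c>R$, so for $\lambda$ large the lower hemisphere of $\delta_\lambda(\Sigma)$ lies entirely above the compact cap $S_r$. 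I then decrease $\lambda$ to the first contact value $\lambda^\ast\ge 1$. The same estimate $f(r)>\sqrt{\alpha/2}\,r$ shows that the rim height $\lambda z_c-\sqrt{\lambda^2R^2-r^2}$ is strictly increasing in $\lambda$, so the boundary circle stays strictly below the slid hemisphere for every $\lambda>1$; consequently a contact at $\lambda^\ast>1$ must be interior. At such a point the slid hemisphere lies above $S_r$ and is tangent to it with matching upward normal, while $H_\varphi^{\delta_{\lambda^\ast}\Sigma}\le 0=H_\varphi^{S_r}$, so the tangency principle forces the two surfaces to coincide, which is impossible since a sphere is not singular minimal. Therefore $\lambda^\ast=1$, i.e. $S_r$ lies below the lower hemisphere of $\Sigma$, and a final application of the tangency principle excludes interior contact and upgrades this to the strict inequality (\ref{esti}).

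The main obstacle is exactly the sign of the lower-order term: it rules out the straightforward graph comparison, and one is forced to find a motion of the barrier that preserves $H_\varphi\le 0$. The two facts that make the sliding close up are that dilations from a point of $\r^3_0(\vec a)$ are admissible symmetries of (\ref{mean}), and that the bound $f>\sqrt{\alpha/2}\,x$ both launches the sweep from above ($z_c>R$) and keeps the boundary circle clear throughout (the rim height is increasing in $\lambda$). The two computations I would carry out with care are the sign of $H_\varphi^{\Sigma}$ over the whole range $\rho\le r$, and the verification that the contact point, pulled back by $\delta_{\lambda^\ast}$, remains in the region $\rho<r$ where $H_\varphi^{\Sigma}<0$, so that the hypothesis of the tangency principle genuinely holds in a neighbourhood of the contact point.
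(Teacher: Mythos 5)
Your proposal is correct and is essentially the paper's own proof: the same sphere through $C_r$ chosen so that its weighted mean curvature vanishes exactly on $C_r$, the same sign analysis of $H_\varphi$ on the cap, and the same sliding argument by dilations centred at the origin, concluded by the tangency principle at an interior first-contact point. The only difference is the device used to rule out boundary contact during the sweep: the paper proves the cap lies inside the cone over $C_r$ with vertex at the origin, while you prove the rim height $\lambda z_c-\sqrt{\lambda^2R^2-r^2}$ is increasing in $\lambda$; both verifications reduce to the same inequality $R^2(z_c^2-R^2)\ge z_c^2r^2$, that is, to Keiper's bound $f(r)>\sqrt{\alpha/2}\,r$.
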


\begin{proof}

 Consider the  lower hemisphere centered at the $z$-axis 
 $$\s^2(R,c)=\{(x,y,z)\in\r^3:z=c-\sqrt{R^2-x^2-y^2},  x^2+y^2\leq R^2\},$$
  where $c>0$. Let us choose $\s^2(R,c)$ containing the circle $C_r$, which implies
\begin{equation}\label{r2}
R^2=r^2+(c-f(r))^2.
\end{equation}
 For  the orientation $N$ on $\s^2(R,c)$ pointing upwards, namely,  $N(p)=-(p-c)/R$,  the weighted mean curvature    $H_\varphi^S$ of  $\s^2(R,c)$ is 
\begin{equation}\label{hh}
H_\varphi^S(x,y,z)=\frac{1}{R}\left(1-\frac{\alpha(c-z)}{2z}\right).
\end{equation}
Take   $R$ and $c$ such that   $H_\varphi^S= 0$ along $C_r$: this implies  from (\ref{r2}) and (\ref{hh}) that 
\begin{equation}\label{RR}
R^2=r^2+\frac{4 f(r)^2}{\alpha^2}.
\end{equation}
Hence, and from (\ref{r2}), the value of $c$ is 
\begin{equation}\label{c3}
c=\frac{2+\alpha}{\alpha}f(r).
\end{equation}
 Consider the spherical cap  $\s_{-}^2(R,c)$ of $\s^2(R,c)$     situated below the horizontal plane of equation $z=f(r)$. From (\ref{hh}), the function $H_\varphi^S$ is increasing on $z$, and thus  $H_\varphi^S<0=H_\varphi^S(C_r)$ in the  interior of $ \s_{-}^2(R,c)$.

Denote by $\mathcal{C}$ the half-cone with vertex at the origin and determined by the circle $C_r$ and let 
$$\mathcal{C}^+=\{(x,y,z)\in\r^3: f(r)^2(x^2+y^2)<r^2z^2,  z>0\}$$
 be the $3$-domain bounded   by $\mathcal{C}$ that contains  the upper $z$-axis.

{\it Claim.} The interior of the spherical cap $\s_{-}^2(R,c)$ is included in $\mathcal{C}^+$.

In order to prove the claim, it is enough to see that 
\begin{equation}\label{ff}
\frac{f(r)}{r}< \frac{c-\sqrt{R^2-x^2}}{x}
\end{equation}
for all $x$ such that $z=c-\sqrt{R^2-x^2}$ and $0\leq x< r$. Taking into account that $f(x)>\sqrt{\alpha/2}x$, it is easily to check that  the right hand side of (\ref{ff}) is a decreasing function on $x$. Thus (\ref{ff}) is proved if 
$$\frac{f(r)}{r}\leq\frac{c-\sqrt{R^2-r^2}}{r}.$$
But this holds because of (\ref{r2}). This finishes the proof of the claim.

We are in position to prove (\ref{esti}). We observe that (\ref{esti}) is equivalent to see that the   generating curve of $\s^2_{-}(R,c)$ lies above the graphic of $z=f(x)$ for $0\leq x< r$. Indeed, $\s^2_{-}(R,c)$ is obtained by rotating the curve 
$z=c-\sqrt{R^2-x^2}$ and the inequality
$f(x)< c-\sqrt{R^2-x^2}$  coincides with (\ref{esti}) thanks to the values of $R$ and $c$ in (\ref{RR}) and (\ref{c3}), respectively. 

By using  dilations from the origin $O=(0,0,0)$, we consider the spherical caps $\lambda \s^2_{-}(R,c)$ for $\lambda$ sufficiently big so $\lambda \s^2_{-}(R,c)\cap S_r=\emptyset$. We have that all  the spherical caps $\lambda \s^2_{-}(R,c)$ are included in the domain $\mathcal{C}^+$   and that their boundaries lie contained in $\mathcal{C}$ because this occurs for $\s^2_{-}(R,c)$. Then we come back $\lambda \s^2_{-}(R,c)$ by letting $\lambda\searrow 1$. The weighted mean curvature $H_\varphi^S$ of  $\s^2_{-}(R,c)$ satisfies $H_\varphi^S< 0$. Consequently,  the value of $H_\varphi$ for all the surfaces $\lambda\s^2_{-}(R,c)$ also fulfills $H_\varphi< 0$. If there exists a first contact point $p$ between $\lambda_1 \s^2_{-}(R,c)$ and $S_r$ for some $\lambda_1>1$, then $p$ must be an interior point of both surfaces because $\partial(\lambda_1 \s^2_{-}(R,c))\subset \mathcal{C}$, $\lambda_1>1$ and $\partial \s^2_{-}(R,c)$ lies in the horizontal plane of equation   $z=f(r)$. However this is impossible by   the tangency principle    because $H_\varphi=0$ on $S_r$, $H_\varphi^S<0$ and $\lambda_1 \s^2_{-}(R,c)$ lies above $S_r$ around the point $p$.  Definitively, $\lambda=1$ and this implies that generating curve of $\s^2_{-}(R,c)$ lies above the graphic of $z=f(x)$ for $0\leq x< r$, proving the result. 
\end{proof}

\section{Singular minimal surfaces with planar boundary}\label{s-b1}

After a change of coordinates, in this section we assume that the density vector is $\vec{a}=(0,0,1)$.  Denote by $\r^3_{+}$ the halfspace $z>0$.  It was proved in  Proposition    \ref{pr-1}  that a compact singular minimal surface with boundary contained in a plane orthogonal to $\vec{a}$ lies in one side of the boundary plane. We generalize this result in case that the boundary plane is arbitrary.  

\begin{proposition}\label{prl} Let $M$ be a compact singular minimal surface whose boundary $\Gamma$ is contained in a plane $\Pi$. 
\begin{enumerate}
\item If $\Pi$ is a vertical plane, then $M$ is a subset of $\Pi$.
\item If $\Pi$ is not a vertical plane, then $\mbox{int}(M)$ lies in one side of $\Pi$  and $M$ is not tangent  to $\Pi$ at any boundary point. More precisely, if $\alpha>0$ (resp. $\alpha<0$), then $\mbox{int}(M)$ lies below (resp. above) $\Pi$.
\end{enumerate}
\end{proposition}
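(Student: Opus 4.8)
The plan is to reduce both statements to the tangency principle (Proposition in Section \ref{s-pre}) combined with the analyticity of singular minimal surfaces, using the plane $\Pi$ itself as the comparison surface in the vertical case and a foliation by parallel planes in the non-vertical case. I would begin with the observation, already exploited in Proposition \ref{prv}, that any plane parallel to $\vec{a}$ (a vertical plane) is itself a singular minimal surface, since for such a plane the normal $N$ is orthogonal to $\vec{a}$, so $\langle N,\vec{a}\rangle=0$ and equation (\ref{mean}) is trivially satisfied with $H=0$. This is the structural fact that drives part (1).

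For part (1), assume $\Pi$ is vertical and, seeking a contradiction, suppose $\mathrm{int}(M)$ is not contained in $\Pi$. Pick a unit horizontal direction $\vec{v}$ orthogonal to $\Pi$ and consider the function $p\mapsto\langle p,\vec{v}\rangle$ on $M$. Since $\Gamma\subset\Pi$, the boundary values of this function are constant (equal to $\langle\Pi,\vec{v}\rangle$), so if $M$ leaves $\Pi$ on either side, the function attains a global maximum or minimum at an interior point. By Proposition \ref{prv}, this forces $M$ to be a subset of a plane parallel to $\vec{a}$; since that plane must contain $\Gamma$, it is $\Pi$ itself, a contradiction. Hence $\mathrm{int}(M)\subset\Pi$ and $M$ is a subset of $\Pi$. (This is essentially Corollary \ref{prv2}(1), so I would cite it directly to keep part (1) short.)

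For part (2), suppose $\Pi$ is not vertical and let $\vec{b}$ be a unit normal to $\Pi$. I would use the foliation of $\r^3$ by the parallel planes $\Pi_t=\Pi+t\vec{b}$, sweep them in from one side, and apply the tangency principle at the first interior contact. The key computation is the sign of the weighted mean curvature of a \emph{non-vertical} plane $\Pi_t$: since $\Pi_t$ is not parallel to $\vec{a}$, one has $\langle N_t,\vec{a}\rangle\neq0$, and by the formula $H_\varphi=H-\alpha\langle N,\vec{a}\rangle/(2\langle\phi,\vec{a}\rangle)$ with $H=0$ for a plane, $H_\varphi$ for $\Pi_t$ is a nonzero multiple of $-\alpha/\langle p,\vec{a}\rangle$, whose sign is governed by $\alpha$. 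Comparing this nonzero value against $H_\varphi=0$ on $M$, the tangency principle forbids an interior tangency on the appropriate side, which both rules out a touching point in $\mathrm{int}(M)$ and, by the boundary version of the principle (using that $T_p\Pi=\Pi$ would be forced at a boundary tangency), rules out tangency of $M$ to $\Pi$ along $\Gamma$. That $\mathrm{int}(M)$ lies on one definite side, below for $\alpha>0$ and above for $\alpha<0$, then follows by identifying which orientation makes the comparison consistent, in direct analogy with the argument preceding Proposition \ref{pr-1}.

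The main obstacle I anticipate is the bookkeeping of orientations and signs in part (2): one must orient $M$ and each $\Pi_t$ compatibly so that ``lies above/below with respect to $N$'' in the tangency principle matches the geometric side on which the sweep makes first contact, and then track how the sign of $\alpha$ flips the conclusion between below and above. A secondary subtlety is the first-contact argument itself, where I would confirm that for $t$ far enough the plane $\Pi_t$ is disjoint from the compact surface $M$, so that sweeping $t$ back toward the value with $\Pi_t=\Pi$ produces a well-defined first interior contact before reaching the boundary, unless contact occurs only at $\Gamma\subset\Pi$ — which is exactly the degenerate tangential case the boundary form of the principle excludes. The interior estimates in (\ref{mean1}) guarantee the ellipticity needed for all these applications.
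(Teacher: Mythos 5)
Your proposal is correct and takes essentially the same route as the paper: part (1) is exactly Corollary \ref{prv2}(1), and part (2) is the paper's own argument --- sweep the foliation of parallel planes $\Pi_t$, use the sign of the weighted mean curvature of a non-vertical plane, $H_\varphi^t=-\tfrac{\alpha}{2}\langle\vec{v},\vec{a}\rangle/\langle q,\vec{a}\rangle$, and apply the interior (and then boundary) tangency principle at a first contact point, which must be interior since $\Gamma\subset\Pi_0$. The orientation bookkeeping you flag is handled exactly as you anticipate: because $H_\varphi=0$ on $M$ for either orientation, one is free to choose $N(p)=\vec{v}$ at the contact point so that the plane lies above $M$ there, and the sign of $\alpha$ (through the sign of $H_\varphi^t$) dictates whether the sweep traps $\mbox{int}(M)$ below or above $\Pi$.
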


\begin{proof}
If $\Pi$ is a vertical plane, then we use Corollary   \ref{prv2} obtaining the item (1). 

For item (2), we only do the proof for the case $\alpha>0$ (similarly if $\alpha<0$). Suppose        $\Pi=\{q\in\r^3: \langle q-q_0,\vec{v}\rangle=0\}$ where $\vec{v}\in\r^3$, $|\vec{v}|=1$ and $\langle \vec{v},\vec{a}\rangle>0$. Let $\Pi_t$ be the plane of equation $\langle q-q_0,\vec{v}\rangle=t$ and consider the foliation $\{\Pi_t:t\in\r\}$ of $\r^3$ by  parallel planes: notice that $\Pi_0=\Pi$. We denote by $\Pi_t$ again the part of $\Pi_t$ in the halfspace $\r^3_{+}$ and take  on $\Pi_t$ the orientation given by $\vec{v}$. Then  the weighted mean curvature $H_\varphi^t$ of $\Pi_t$ is  
$$H_\varphi^t(q)=-\frac{\alpha}{2}\frac{\langle \vec{v},\vec{a}\rangle}{\langle q,\vec{a}\rangle}, \quad (q\in \Pi_t)$$ 
and   $H_\varphi^t<0$ because $\alpha>0$ and $\langle\vec{v},\vec{a}\rangle>0$. For $t$ sufficiently large, we have $\Pi_t\cap M=\emptyset$  because $M$ is compact. Let us move  $\Pi_t$ towards $M$ by letting $t\searrow 0$. 

{\it Claim.} The plane $\Pi_t$ does not meet $M$ for every $t>0$.

The proof of the claim is by contradiction. Let $t_0>0$ be the first time that $\Pi_{t_0}$ touches $M$ at some point $p$. Since $\Gamma\subset\Pi_0=\Pi$, then $p\not\in\Gamma$, hence $p$ is an interior point and   $\Pi_{t_0}$ and $M$ are tangent at $p$. We know that $H_\varphi=0$ on $M$, independently on the orientation $N$. If we choose $N$  such that $N(p)$ coincides with $\vec{v}$, then $\Pi_{t_0}$ lies above $M$ around   $p$ and the tangency principle gives a contradiction. This proves the claim.

From the claim, and letting $t\searrow 0$,  we arrive until $\Pi_0$ and, consequently, $M$ lies  below the plane $\Pi$. The same  argument comparing now $M$ with the very plane $\Pi$ proves that $\Pi$ does not contain interior points of $M$. We conclude that  $\mbox{int}(M)$ lies below $\Pi$ and is   not    tangent to $\Pi$ at some boundary point (boundary version of the tangency principle).
\end{proof}

As a consequence of Proposition    \ref{prl}, we   obtain two types of  results answering to the Problem B   in case that   the singular minimal surface is embedded. The technique that we use is the  Alexandrov reflection method (\cite{al}). The key point of this technique  is that  by means of reflections about a uniparametric family of planes, we can compare the given surface with itself by means of  the tangency principle. The first result   says that a singular minimal surface   inherits some symmetries from its boundary.

\begin{theorem}\label{ts1} Let $\Gamma\subset \r^3_{+}$ be a simple closed curve contained in a non-vertical  plane $\Pi$. Assume that $\Gamma$ is symmetric about the reflection across a vertical  plane $P$  and that   $P$ separates  $\Gamma$ in  two graphs on the straight-line $\Pi\cap P$. If $M$ is an   embedded compact singular minimal surface with boundary $\Gamma$, then   $P$ is symmetry plane of $M$.
\end{theorem}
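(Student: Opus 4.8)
The plan is to run the Alexandrov reflection method with the uniparametric family of vertical planes parallel to $P$, exploiting that equation (\ref{mean}) is invariant under reflection across any vertical plane: such a reflection is an isometry fixing $\vec{a}$ and preserving $\r^3_+(\vec{a})$, hence it preserves $\langle N,\vec{a}\rangle$, $\langle\phi,\vec{a}\rangle$, and so $H_\varphi=0$. After a change of coordinates I take $\vec{a}=(0,0,1)$; since $P$ is vertical I let $\vec{v}$ be its horizontal unit normal and normalize $P=\{x=0\}$, $\vec{v}=(1,0,0)$. A preliminary observation I would record is that $P$ is necessarily orthogonal to $\Pi$: because $\Gamma\subset\Pi$ is a closed curve invariant under the reflection $R$ across $P$, the plane $R(\Pi)$ also contains $\Gamma$, and as two distinct planes meet in a line, this forces $R(\Pi)=\Pi$. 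Consequently each plane $P_t=\{x=t\}$ of the foliation is orthogonal to $\Pi$, so the reflection $R_t$ across $P_t$ maps $\Pi$ to itself and carries $\Gamma$ to a curve again lying in $\Pi$. By Proposition \ref{prl}, $\mathrm{int}(M)$ lies strictly on one side of $\Pi$ and $M$ is transverse to $\Pi$ along $\Gamma$, so $M$ together with the planar domain bounded by $\Gamma$ in $\Pi$ is an embedded closed surface enclosing a compact domain; this is the object on which the reflection makes sense.

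Next I would start the planes far away and move them toward $P$. Set $t_1=\max_{M}\langle p,\vec{v}\rangle$. By Proposition \ref{prv} the function $\langle p,\vec{v}\rangle$ cannot attain an interior maximum (as $M$ is not a vertical plane), so $t_1$ is attained on $\Gamma$; because $\Gamma$ is a bigraph over $\ell=\Pi\cap P$, this maximum lies on a rightmost arc away from $P$, whence $t_1>0$. For $t$ slightly below $t_1$ the cap $M^+(t)=\{p\in M:\langle p,\vec{v}\rangle\ge t\}$ is, using the bigraph property of $\Gamma$ and the transversality along $\Gamma$, a graph in the $\vec{v}$-direction, so its reflection $R_t(M^+(t))$ is well defined and lies on the correct side of $M^-(t)=\{p\in M:\langle p,\vec{v}\rangle\le t\}$. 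This is the standard getting-started step.

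The core is the continuation. I would let $t^\ast$ be the first value of $t$ (decreasing from $t_1$) at which $R_t(M^+(t))$ touches $M^-(t)$ somewhere other than along $P_t$. If the contact is at an interior point of both surfaces, the tangency principle (the orientation hypothesis being vacuous, since $H_\varphi=0$ for either orientation) together with analyticity forces $R_{t^\ast}(M^+(t^\ast))=M^-(t^\ast)$, i.e. $M$ is symmetric about $P_{t^\ast}$; then $\Gamma$ would be symmetric about both $P_{t^\ast}$ and $P=P_0$, which is impossible for a compact curve unless $t^\ast=0$. If instead the contact is a boundary contact, it must arise from $R_{t^\ast}(\Gamma\cap\{x>t^\ast\})$ meeting $\Gamma\cap\{x<t^\ast\}$; since $R_{t^\ast}$ preserves $\Pi$, this is a purely two-dimensional question about the planar curve $\Gamma\subset\Pi$, and the hypothesis that $\Gamma$ splits into two graphs over $\ell$ is exactly what guarantees that the reflected arc stays strictly inside $\Gamma$ for every $t>0$, reaching it only at $t=0$. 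Hence both scenarios give $t^\ast=0$, so the reflection stays valid for all $t\in(0,t_1]$.

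Finally, at $t=0$ the reflected upper half $R_0(M^+(0))$ and the lower half $M^-(0)$ are two singular minimal surfaces that, by the symmetry of $\Gamma$ about $P$, share the boundary $\big(\Gamma\cap\{x\le0\}\big)\cup(M\cap P)$, with $R_0(M^+(0))$ on one side of $M^-(0)$. Along the common curve $M\cap P$ the two surfaces have tangent planes $T_pM$ and $R_0(T_pM)$; one-sidedness forbids a transversal crossing, which forces $T_pM=R_0(T_pM)$, so they are tangent there. The boundary version of the tangency principle then yields $R_0(M^+(0))=M^-(0)$, i.e. $P$ is a plane of symmetry of $M$. I expect the genuine difficulty to be the control of the boundary during the reflection: ruling out a premature boundary contact before the planes reach $P$, and justifying the tangency along $M\cap P$ at the end. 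Both are delivered precisely by the two structural hypotheses on $\Gamma$ — symmetry about $P$ and the splitting into two graphs over $\ell$ — together with the transversality of $M$ to $\Pi$ supplied by Proposition \ref{prl}; without the bigraph condition the reflected boundary could touch $\Gamma$ early and the method would break down.
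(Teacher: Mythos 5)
Your setup follows the paper's own strategy --- the Alexandrov method with the vertical planes $P_t$, Proposition \ref{prl} to confine all boundary contacts to $\Pi$, the bigraph hypothesis to exclude contact between $R_t(\Gamma\cap\{x>t\})$ and $\Gamma$ for $t>0$, and the remark that a compact curve cannot be symmetric about two distinct parallel planes --- and those parts are sound. The gap is in your final step. Having shown that no contact away from $P_t$ occurs for any $t>0$, you pass to $t=0$ and claim that, since $R_0(M^+(0))$ lies on one side of $M^-(0)$ and the two caps share the boundary curve $M\cap P$, ``one-sidedness forbids a transversal crossing, which forces $T_pM=R_0(T_pM)$.'' That inference is false. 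Two surfaces-with-boundary sharing their boundary curve can lie on one side of one another while meeting at a nonzero dihedral angle along that curve: take $M^-(0)=\{z=mx,\ x\le 0\}$ and $R_0(M^+(0))=\{z=-mx,\ x\le 0\}$ with $m>0$; they share the $y$-axis, the second lies in the closed half-space $\{z\ge mx\}$, yet they are nowhere tangent. One-sidedness along a common boundary only fixes the \emph{sign} of the wedge angle, it does not make the angle zero. (``No crossing implies tangency'' is valid at a contact point interior to both surfaces --- which is why your interior-contact scenario is fine --- but not at a common boundary point.) Without tangency the boundary version of the tangency principle cannot be invoked; and note that what you are trying to extract here, namely that $M$ meets $P$ orthogonally along $M\cap P$, is essentially the symmetry statement itself. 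Relatedly, your stopping time $t^\ast$ only registers contacts off $P_t$, so your process never records any tangency information on the moving plane; that is precisely the information missing at $t=0$.

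The paper closes this hole by arguing by contradiction: if $P$ is not a symmetry plane, there exist $q_1,q_2\in M\setminus\Gamma$ on a common line orthogonal to $P$, on opposite sides of it, with $\mathrm{dist}(q_1,P)>\mathrm{dist}(q_2,P)$; then $R_t(q_1)=q_2$ at $t=\tfrac{1}{2}\left(x(q_1)+x(q_2)\right)>0$, so the first contact occurs at some $t_2>0$. At $t_2$ the contact is either interior, or an orthogonality point $p\in M\cap P_{t_2}$ with $p\notin\Gamma$ (excluded from $\Gamma$ by the graph hypothesis); in the latter case $e_1\in T_pM$, so $T_pM$ is genuinely $R_{t_2}$-invariant, tangency holds, and the boundary tangency principle applies. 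Both cases give symmetry of $M$ about $P_{t_2}$, contradicting that $\Gamma$, compact and symmetric about $P$, cannot also be symmetric about $P_{t_2}\neq P$. Alternatively, you can keep your direct scheme but finish two-sidedly: running the planes from $x=-\infty$ as well yields $R_0(M^-(0))\subset\overline{\mathcal D}$ in addition to $R_0(M^+(0))\subset\overline{\mathcal D}$; then $\partial\left(R_0(\mathcal D)\right)=R_0(M)\cup\Omega\subset\overline{\mathcal D}$ forces $R_0(\mathcal D)\subset\overline{\mathcal D}$, and equality of volumes gives $R_0(\overline{\mathcal D})=\overline{\mathcal D}$, hence $R_0(M)=M$. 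One of these repairs is needed; as written, the proof does not close.
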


\begin{proof} By Proposition    \ref{prl}   we know that $\mbox{int}(M)$ lies in one side of  $\Pi$. If $\Omega\subset\Pi$ is the domain bounded by $\Gamma$, the  embeddedness of $M$ ensures that  $M\cup\Omega$ defines a closed surface without boundary in $\r^3$,  possibly  non-smooth along $\Gamma$. Hence, $M\cup\Omega$ determines a bounded $3$-domain   $\mathcal{D}\subset\r^3$. Then we  use the Alexandrov reflection method by planes parallel to $P$.   Let us observe that the plane $\Pi$ is not necessarily horizontal (i.e. orthogonal to the direction $\vec{a}$).  After a rotation about the $z$-axis and up to a horizontal translation, we suppose that $P$ is the plane of equation $x=0$.

The proof is by contradiction. Suppose that $P$ is not a plane of symmetry of $M$, and thus    there are two points $q_1, q_2\in M\setminus\Gamma$ such that the line $\overline{q_1q_2}$ joining $q_1$ to $q_2$ is orthogonal to $P$, $q_1$ and $q_2$ are in opposite sides of $P$ and $\mbox{dist}(q_1,P)>\mbox{dist}(q_2,P)$. Without loss of generality, suppose $x(q_2)<0<x(q_1)$. Then the symmetric point of $q_1$ about $P$, say $q_1^*$, satisfies $x(q_1^*)<x(q_2)$.  For any $t\in\r$, denote $P_t$ the plane of equation $x=t$. Let  $M(t)^{-}=M\cap \{x\leq t\}$, $M(t)^{+}=M\cap \{x\geq t\}$ and $M(t)^*$ the reflection of $M(t)^{+}$ about $P_t$. We notice that the reflection about $P_t$ preserves the singular minimal surface property as well as this reflection leaves invariant as a subset, the boundary plane $\Pi$. Since $M$ is compact, we have $P_t\cap M=\emptyset$ for $t$ sufficiently large. Then we move $P_t$ towards $M$ by letting $t\searrow 0$, until the first contact point with $M$ at the time $t_1>0$. Then we move slightly more $P_{t_1}$, $t<t_1$, and we reflect $M(t)^{+}$.  The embeddedness of $M$, and being $M$   below $\Pi$, assures the existence of $\epsilon>0$ such that $\mbox{int}(M(t)^*)\subset \mathcal{D}$ for every $t\in (t_1-\epsilon,t_1)$. Because $M$ is compact,  there is $t_2\geq 0$ with $t_2<t_1$, such that $\mbox{int}(M(t)^*)$ is outside $\mathcal{D}$ for any $t<t_2$. In fact, $t_2>0$ by the existence of the points $q_1$ and $q_2$ and $0<x(q_1^*)<x(q_2)$. Furthermore, and because $M(t)^{+}$ is a graph of $P_t$ for $t>t_2$ and $\Gamma\cap\{x>0\}$ is also a graph on the line $P\cap \Pi$, we have $\partial M(t_2)^*\cap \Gamma\subset P_{t_2}$. There are two possibilities:
\begin{enumerate}
\item There exists $p\in \mbox{ int}(M(t_2)^*)\cap  \mbox{ int} (M(t_2)^{-})$. Because $p$ is an interior point, $M(t_2)^*$ and $M(t_2)^{-}$ are tangent at $p$. Since $M(t_2)^*$ is in one side of $M(t_2)^{-}$, and both surfaces are singular minimal surfaces, the tangency principle and the analyticity of $M$ imply that $M(t_2)^*=M(t_2)^{-}$, and thus,  $P_{t_2}$ is a plane of symmetry of $M$: a contradiction because $t_2>0$ and $\Gamma$ is not invariant by reflections across $P_{t_2}$.
\item The surface $M$ is orthogonal to $P_{t_2}$ at some point $p\in\partial M(t_2)^{*}\cap \partial M(t_2)^{-}$. This point $p$ can not belong to $\Gamma$, that is, $p\not\in \Gamma\cap P_{t_2}$ because $\Gamma$ is a graph on $P_{t_2}\cap \Pi$.  We now use the boundary version of the tangency principle, concluding that $P_{t_2}$ is a plane of symmetry of $M$, a contradiction again.
\end{enumerate}
\end{proof}

Theorem \ref{ts1}  may not be true in case that $P$ separates $\Gamma$ in two symmetric pieces that are not graphs on $\Pi\cap P$: it has been crucial in the above proof to prevent   that the contact point $p$ could belong to $(\partial M(t_2)^*\cap\Gamma)\setminus P_{t_2}$. 

 In the special case that $\Gamma$ is a circle contained in a horizontal plane $\Pi$, we deduce that  $M$ is symmetric about every plane orthogonal to $\Pi$ and containing  the center of $\Gamma$. 

 \begin{corollary}\label{c-c1}
  The only  embedded compact singular minimal surfaces with circular boundary contained in a horizontal plane are   surfaces of revolution whose rotation axis is vertical.
 \end{corollary}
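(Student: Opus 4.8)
The plan is to apply Theorem \ref{ts1} to the one-parameter family of vertical planes through the center of the circle, and then to promote the resulting reflective symmetries to full rotational symmetry by composing reflections. Once Theorem \ref{ts1} is available, the corollary is essentially immediate, so the work is mostly in checking hypotheses and assembling the symmetry group.

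First I would fix coordinates. As in the section, we take $\vec{a}=(0,0,1)$, and after a horizontal translation we may assume the circular boundary $\Gamma$ is centered on the $z$-axis and lies in a horizontal plane $\Pi$ of equation $z=c>0$. Let $\ell$ denote the $z$-axis, which is vertical and passes through the center of $\Gamma$.

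Next, for each angle $\beta$ let $P_\beta$ be the vertical plane containing $\ell$ that makes angle $\beta$ with the $xz$-plane. I would verify that every $P_\beta$ meets the three hypotheses of Theorem \ref{ts1}. The plane $\Pi$ is horizontal, hence non-vertical; the circle $\Gamma$ is invariant under reflection across $P_\beta$, since $P_\beta$ contains a diameter of $\Gamma$; and $P_\beta$ separates $\Gamma$ into its two semicircles, each of which is a graph over the diameter line $\Pi\cap P_\beta$. With all three hypotheses satisfied, Theorem \ref{ts1} yields that $P_\beta$ is a plane of symmetry of $M$ for every $\beta$.

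Finally I would conclude that $M$ is a surface of revolution about $\ell$. The composition of the reflections across two vertical planes $P_{\beta_1}$ and $P_{\beta_2}$ through $\ell$ is the rotation about $\ell$ by the angle $2(\beta_2-\beta_1)$; letting $\beta_1,\beta_2$ range freely, these compositions realize every rotation about $\ell$. Since $M$ is invariant under all the reflections across the planes $P_\beta$, it is invariant under all rotations about the vertical axis $\ell$, and is therefore a surface of revolution whose rotation axis is the vertical line through the center of $\Gamma$. The only points requiring any care are the elementary verification of the graph condition for each diameter plane and the standard observation that reflections across all planes through $\ell$ generate the full rotation group about $\ell$; neither presents a genuine obstacle once Theorem \ref{ts1} is in place.
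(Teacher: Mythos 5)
Your proposal is correct and is exactly the paper's argument: the paper deduces the corollary from Theorem \ref{ts1} by noting that a circle in a horizontal plane satisfies the symmetry and graph hypotheses for every vertical plane through its center, so $M$ is symmetric about all such planes and hence rotational. Your verification of the hypotheses for each diameter plane and the composition-of-reflections step just make explicit what the paper leaves implicit.
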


\begin{remark}When the boundary  is a circle contained in a non-horizontal plane, it is expectable that the surface, if exists,  is not rotational because the surface contains  a (full) circle but   the rotation axis must be vertical, which is not possible. In fact,  we find explicit  examples. If $\alpha<0$, it was proved in \cite{lo2} the solvability of the Dirichlet problem for the equation (\ref{mean}) on convex domains and arbitrary boundary data. So, let $C$ be a circle contained in a tilted plane $P$ and let $D$ be the projection on $\r^2$ of the round disk bounded by $C$ in $P$. Since $D$ is convex, there is an $\alpha$-singular minimal graph $M$ on $D$ whose boundary is $C$, but  $M$ can not be  a surface of revolution about a vertical axis. 
\end{remark}
Thanks to Corollary \ref{c-c1}, we   rewrite   Theorem  \ref{t-esti}.  

\begin{corollary} Let $\alpha>0$. Let $\Gamma$ be a circle of radius $r>0$ contained in the horizontal plane of equation $z=c>0$.  If $M$ is an  embedded compact $\alpha$-singular minimal surface with boundary $\Gamma$, then  the lowest point $p_0$ of $M$, satisfies 
$$z(p_0)<\left(\frac{\alpha+2}{\alpha}\right)c-\sqrt{r^2+\frac{4c^2}{\alpha^2}}.$$
\end{corollary}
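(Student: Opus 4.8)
The plan is to deduce this corollary directly from Corollary \ref{c-c1} together with the height estimate (\ref{esti2}) already established in Theorem \ref{t-esti}. First, since $M$ is an embedded compact singular minimal surface whose boundary $\Gamma$ is a circle lying in the horizontal plane $z=c$, Corollary \ref{c-c1} applies and guarantees that $M$ is a surface of revolution with vertical rotation axis. By the rotational symmetry this axis is the vertical line through the center of $\Gamma$; after a horizontal translation (which leaves the singular minimal surface equation invariant, as noted in Section \ref{s-pre}) we may assume it is the $z$-axis, so that $\Gamma$ becomes the circle $C_r=\{(r\cos\theta,r\sin\theta,c):\theta\in[0,2\pi]\}$.

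Next I would check that $M$ is precisely the surface $S_r$ of Section \ref{s-height}, that is, that it meets the axis and is generated by a profile $f$ solving the initial value problem (\ref{rot1})--(\ref{rot2}) with $f(r)=c$. Here the embeddedness and the fact that $\partial M=\Gamma$ is a single circle are decisive: a connected rotational surface about the $z$-axis that avoids the axis is an annular (winglike) piece whose profile curve has two endpoints off the axis, and hence has two boundary circles, contradicting $\partial M=\Gamma$. Therefore the profile curve must close up on the axis, giving the disk-type surface governed by (\ref{rot1})--(\ref{rot2}), with $f(r)=c$ forced by the position of $\Gamma$. Since $f$ is increasing (it stays above the line $z=\sqrt{\alpha/2}\,x$ and has $f'(0)=0$), the lowest point $p_0$ of $M$ lies on the axis at $x=0$, so that $z(p_0)=f(0)$.

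It then only remains to substitute $f(r)=c$ into the estimate (\ref{esti2}) of Theorem \ref{t-esti}, which yields
$$z(p_0)=f(0)<\left(\frac{\alpha+2}{\alpha}\right)c-\sqrt{r^2+\frac{4c^2}{\alpha^2}},$$
the desired inequality. The only genuinely non-routine step is the middle one: one must rule out that the embedded rotational surface spanning the single circle $\Gamma$ is a winglike surface avoiding the axis, and thereby confirm that it is indeed controlled by the initial value problem (\ref{rot1})--(\ref{rot2}) rather than falling outside the scope of Theorem \ref{t-esti}. Everything else is an immediate consequence of the two quoted results.
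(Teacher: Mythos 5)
Your proposal is correct and follows essentially the same route as the paper, which presents this corollary as a direct combination of Corollary \ref{c-c1} (rotational symmetry) with the estimate (\ref{esti2}) of Theorem \ref{t-esti}. In fact you supply more detail than the paper does: the step ruling out the winglike (axis-avoiding) rotational case, needed to place $M$ within the scope of the initial value problem (\ref{rot1})--(\ref{rot2}), is left implicit in the paper but is correctly identified and justified in your argument.
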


We prove that the assumption of embeddedness  in   Corollary   \ref{c-c1}  can be dropped if $\alpha$ is negative. 
 
 \begin{proposition}\label{pr-ne}
  Let $\alpha<0$. Let $\Gamma\subset\r^3_{+}$ be a simple  closed convex surface contained in a horizontal plane $\Pi$. If $M$ is a compact $\alpha$-singular minimal surface with boundary $\Gamma$, then $M$ is a graph on $\Pi$. As a consequence, if $\Gamma$   is a circle, then $M$ is a surface of revolution.
 \end{proposition}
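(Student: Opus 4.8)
The plan is to reduce the statement to a comparison between $M$ and the graph produced by the Dirichlet problem, and to show that the two must coincide. After a change of coordinates assume $\vec{a}=(0,0,1)$, write $\Pi=\{z=c\}$, let $\Omega\subset\{z=0\}$ be the convex domain whose boundary projects to $\Gamma$, and identify $\Gamma$ with the lift of $\partial\Omega$ to height $c$. By Proposition \ref{pr-1} (case $\alpha<0$) the surface $M$ lies in the slab $\{z\ge c\}$, and by Corollary \ref{prv2}(2) its vertical projection satisfies $\pi(M)\subseteq\overline{\Omega}$. Since $\alpha<0$ and $\Omega$ is convex, the solvability of the Dirichlet problem for (\ref{mean1}) proved in \cite{lo2} furnishes a function $u\in C^\infty(\overline{\Omega})$ with $u|_{\partial\Omega}=c$ whose graph $G$ is an $\alpha$-singular minimal surface with $\partial G=\Gamma$; again by Proposition \ref{pr-1}, $u\ge c$ with $u>c$ in $\Omega$. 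The goal is to prove $M=G$, which makes $M$ a graph.

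The comparison surfaces will be the dilations of $G$. Fix a centre $O=(o_1,o_2,0)$ with $(o_1,o_2)\in\Omega$, and for $\lambda>0$ let $\lambda G$ denote the image of $G$ under the dilation of ratio $\lambda$ centred at $O$. As recalled in Section \ref{s-pre}, such a dilation preserves the $\alpha$-singular minimal surface equation; since $O$ lies in $\{z=0\}$ it also sends vertical lines to vertical lines, so each $\lambda G$ is again a graph, over the dilated domain $\lambda\Omega$, with the heights of $G$ scaled by $\lambda$. I would then run two sweeps. First, for $\lambda$ large the part of $\lambda G$ over $\overline{\Omega}$ has height at least $\lambda c$ and hence lies entirely above the compact surface $M$; letting $\lambda\searrow 1$ one should obtain $M$ below $G$, i.e. $z(p)\le u(\pi(p))$ for every $p\in M$. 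Second, for $\lambda$ small $\lambda G$ has height less than $c$ and thus lies below $M\subset\{z\ge c\}$; letting $\lambda\nearrow 1$ one should obtain $M$ above $G$ over $\Omega$. Together these give $z(p)=u(\pi(p))$ on $\mathrm{int}(M)$, whence $M=G$.

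The heart of the matter, and the step I expect to be the main obstacle, is to justify that each sweep really reaches $\lambda=1$, that is, that no contact between $M$ and $\lambda G$ can occur for $\lambda\ne 1$. The mechanism is the tangency principle, but to apply it one must know that a first contact point $p$ is interior to both surfaces. This is exactly where the convexity of $\Omega$ and the one-sidedness $M\subset\{z\ge c\}$ enter: for $\lambda>1$ the height of $\lambda G$ over any point of $\overline{\Omega}$ exceeds $c$, so $p$ cannot lie on $\Gamma=\partial M$, while $\partial(\lambda G)=\lambda\Gamma$ projects outside $\overline{\Omega}\supseteq\pi(M)$, so $p$ cannot lie on $\partial(\lambda G)$; for $\lambda<1$ the analogous bookkeeping ($\lambda G$ has height below $c$, and $\lambda\Omega\subsetneq\Omega$ keeps $\Gamma$ out of its projection) again forces $p$ to be interior to both. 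Once $p$ is an interior tangency of two $\alpha$-singular minimal surfaces, the tangency principle together with the analyticity noted in Section \ref{s-pre} forces $\lambda G$ and $M$ to coincide near $p$ and then, by connectedness, $M\subseteq\lambda G$; this is impossible for $\lambda\ne 1$, since $\Gamma\subset M$ while $\lambda G$ has no point of height $c$ over $\overline{\Omega}$. Hence the only admissible contact is at $\lambda=1$, which delivers the two one-sided bounds.

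Finally, the conclusion for a circle is immediate: once $M=G$ is a graph it is in particular embedded, so Corollary \ref{c-c1} applies and $M$ is a surface of revolution. It is worth stressing that embeddedness of $M$ is never used: the whole argument compares the possibly immersed surface $M$ against the \emph{fixed} embedded graph $G$, so no enclosed region (as in the Alexandrov argument of Theorem \ref{ts1}) is needed. This is precisely what the hypothesis $\alpha<0$ buys, through the availability of the Dirichlet solution $G$.
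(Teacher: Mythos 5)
Your proof is correct in substance, but it follows a genuinely different route from the paper's. The paper never introduces the Dirichlet graph: it argues by contradiction, assuming $M$ is not a graph, so that some vertical line through a point $q_0\in\Omega^*$ meets $M$ at two points, and then dilates $M$ \emph{itself} from the centre $q_0$ by factors $\lambda>1$. The doubled point guarantees that, coming back down with $\lambda\searrow 1$, a first contact $\lambda_1>1$ between $\lambda_1 M$ and $M$ actually occurs; convexity of $\Gamma$ puts $\partial(\lambda_1 M)=\lambda_1\Gamma$ outside the cylinder $\Omega^*\times\r$ containing $M$, so the contact is interior, and the tangency principle gives $M\subset\lambda_1 M$, contradicting $\partial M\neq\partial(\lambda_1 M)$. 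What the paper's route buys is self-containedness: only the tangency principle and the results of Section \ref{s-pre} are used, so Proposition \ref{pr-ne} does not logically depend on the solvability of the Dirichlet problem. What your route buys is a stronger conclusion: you show not merely that $M$ is a graph but that $M$ coincides with the Dirichlet solution $G$ of \cite{lo2}, i.e.\ uniqueness of the compact $\alpha$-singular minimal surface spanning $\Gamma$; moreover, comparing the possibly immersed $M$ against a fixed embedded surface sidesteps the delicacies of self-comparison when $M$ has self-intersections.

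Two steps should be made precise. First, the passage from ``no contact for $\lambda\neq 1$'' to ``$M$ lies between the sweeps'' is not automatic, since the dilations $\lambda G$ need not foliate the region above $\Pi$. What rescues it is a coverage lemma: for any $p$ with $\pi(p)\in\Omega$ and $z(p)\geq c$, the ray from $O$ through $p$ lies below $G$ near $O$ (its height tends to $0$ while $u\geq c$) and lies above $G$ at the parameter where its projection exits $\overline{\Omega}$ (that parameter exceeds $1$, so the height there exceeds $c=u$ on $\partial\Omega$); by the intermediate value theorem $p\in\lambda G$ for some $\lambda>0$. Combined with the exclusion of contact for $\lambda\neq 1$, this puts $\mbox{int}(M)\subset G$ at once, and you do not even need the separate ``above'' and ``below'' conclusions. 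Second, your parenthetical ``for $\lambda<1$ the surface $\lambda G$ has height below $c$'' is false for $\lambda$ near $1$; the statement your bookkeeping actually needs is that $\partial(\lambda G)=\lambda\Gamma$ has height $\lambda c<c$, hence cannot meet $M\subset\{z\geq c\}$, while $\Gamma$ cannot meet $\lambda G$ because $\lambda\overline{\Omega}\subset\Omega$ keeps the projections disjoint.
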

 
 \begin{proof}  Denote by $\Omega\subset\Pi$ the domain bounded by $\Gamma$  and let $\Omega^*=\pi(\Omega)\subset\r^2\times\{0\}$, where $\pi$ is the  orthogonal projection on the $xy$-plane. Because $\Omega^*$ is convex, it follows from  Corollary \ref{prv2} and Proposition \ref{pr-1} that $\mbox{int}(M)\subset\Omega^*\times\r$ and $M$ lies above the plane $\Pi$. By contradiction, suppose that $M$ is not a graph on $\Omega^*$. Then there exist two points of $M$, namely, $q_1$ and $q_2$, such that $\pi(q_1)=\pi(q_2)$. Let $q_0=\pi(q_1)\in \Omega^*$.
 
By dilations $\lambda M$, $\lambda>1$, with center the point $q_0$, we move upwards $M$ until that $\lambda M$ does not intersect $M$. Then we come back  by letting $\lambda\searrow 1$. Since the vertical line starting at $q_0$ meets at least twice $M$, namely, at $q_1$ and $q_2$, there  is $\lambda_1>1$ such that $\lambda_1 M$ intersects the first time $M$. Let $p\in M\cap \lambda_1 M$. Because $M\subset\Omega^*\times\r$ and $\Gamma$ is convex,  the boundary curve $\partial (\lambda_1 M)=\lambda_1\Gamma$ lies outside of $\Omega^*\times\r$. In fact, $\partial(\lambda_1M)$ is contained in the halfcone $\{q_0+t(\gamma-q_0):t>0, \gamma\in\Gamma\}$. This implies that $p$ is a common interior point of $M$ and $\lambda_1 M$. As $\lambda_1 M$ lies in one side of $M$ around $p$,   the tangency principle implies that $M$ is a subset of $\lambda_1 M$: a contradiction, because $\partial M\not=\partial(\lambda_1 M)$. This proves that $M$ is   a graph on $\Omega^*$.

For the second statement, we know that    the surface is a graph, in particular, it is embedded and we   apply Corollary \ref{c-c1}. 
 \end{proof}

 Our second result replaces the symmetry of the boundary curve in Theorem \ref{ts1} by the constancy of the angle between the surface and the  boundary plane.

 \begin{theorem}\label{ts2}
 Let $\Pi$ be a non-vertical   plane and let  $M$ be an  embedded compact singular minimal surface with $\partial M\subset\Pi$. If $M$ makes a constant contact angle with $\Pi$ along $\partial M$, then  $M$ has a symmetry about a vertical plane.
  \end{theorem}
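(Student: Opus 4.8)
The plan is to prove Theorem \ref{ts2} by the Alexandrov reflection method, mimicking the structure of the proof of Theorem \ref{ts1} but replacing the role played by the symmetry of $\Gamma$ with the constant contact angle hypothesis. First I would set up the geometry: after a change of coordinates I may assume $\vec{a}=(0,0,1)$, and I treat the case $\alpha>0$ (the case $\alpha<0$ being symmetric), so that by Proposition \ref{prl} the interior of $M$ lies strictly on one side of the non-vertical plane $\Pi$ and $M$ is nowhere tangent to $\Pi$ along $\partial M$. Since $M$ is embedded, $M$ together with the planar domain $\Omega\subset\Pi$ bounded by $\Gamma=\partial M$ encloses a bounded $3$-domain $\mathcal{D}\subset\r^3$, possibly non-smooth along $\Gamma$. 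The goal is to produce a vertical plane $P$ across which $M$ is symmetric.

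Next I would choose a generic horizontal direction and foliate $\r^3$ by the family of vertical planes $P_t$ orthogonal to that direction, say $P_t=\{x=t\}$ after a rotation about the $z$-axis. Because $M$ is compact, $P_t\cap M=\emptyset$ for $t$ large; I then let $t$ decrease, reflect $M(t)^{+}=M\cap\{x\ge t\}$ across $P_t$ to get $M(t)^*$, and track the first instant $t_0$ at which the reflected cap either touches $M(t_0)^{-}=M\cap\{x\le t_0\}$ at an interior point, or becomes tangent to $M(t_0)^{-}$ at a boundary point on $P_{t_0}$. Here the key structural observation is that reflection across the vertical plane $P_t$ leaves the boundary plane $\Pi$ invariant as a set \emph{and} preserves the contact angle, because the angle between the surface and $\Pi$ is measured along the (reflection-invariant) direction and the singular minimal surface equation is invariant under reflection in a vertical plane. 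As in Theorem \ref{ts1}, the tangency principle (interior or boundary version) forces $M(t_0)^*=M(t_0)^{-}$, so that $P_{t_0}$ is a plane of symmetry of $M$.

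The delicate point, and the place where the constant-angle hypothesis must do the work previously done by the symmetry of $\Gamma$, is the analysis of contact points lying on $\partial M\cap P_{t_0}$. I would argue as follows: if the first contact is a boundary contact, then at such a point $p\in\Gamma\cap P_{t_0}$ both $M(t_0)^-$ and the reflected cap $M(t_0)^*$ have boundary on $\Pi$, and the constant contact angle guarantees that $M(t_0)^-$ and $M(t_0)^*$ meet $\Pi$ at the same angle along their common boundary curve. This equality of contact angles supplies exactly the matching of tangent planes $T_p\partial M(t_0)^-=T_p\partial M(t_0)^*$ needed to invoke the boundary version of the tangency principle, which then yields $M(t_0)^*=M(t_0)^-$ and hence the sought symmetry across $P_{t_0}$. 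The main obstacle I anticipate is verifying that the reflection procedure can genuinely be carried to a first-contact configuration in which one cap lies to one side of the other (so that the tangency principle applies with the correct sign of $H_\varphi$), and that the constant angle rules out the surface ``peeling off'' along $\Gamma$ without an interior or properly tangential boundary contact; controlling this requires the embeddedness together with the fact, from Proposition \ref{prl}, that $M$ is never tangent to $\Pi$ along $\Gamma$, so that near the boundary $M$ behaves like a graph over $\Pi$ transverse to $\Pi$ and the reflected caps are genuinely comparable.

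Finally, I would note that the direction $\{x=t\}$ was arbitrary among horizontal directions only insofar as a symmetry plane is produced for the chosen foliation; the conclusion of the theorem asks merely for the existence of \emph{one} vertical symmetry plane, so a single successful application of the above argument suffices, and no averaging over directions or further argument is required.
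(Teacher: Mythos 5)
Your overall strategy coincides with the paper's: Alexandrov reflection by vertical planes, one-sidedness from Proposition \ref{prl}, and the constant contact angle doing the work that the boundary symmetry did in Theorem \ref{ts1}. However, there are two genuine gaps. First, the foliation direction cannot be ``generic''. If $\Pi$ is tilted (non-vertical and non-horizontal), reflection across a vertical plane $P_t$ leaves $\Pi$ invariant only when $P_t$ is orthogonal to the unique horizontal direction $\vec{w}$ that is parallel to $\Pi$; for any other horizontal direction the reflected cap's boundary no longer lies in $\Pi$, the reflected surface is not comparable with $M$ as a surface with boundary in $\Pi$, and your key claim that reflection ``leaves the boundary plane $\Pi$ invariant as a set'' is false. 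The paper makes this choice explicitly ($\vec{w}$ horizontal and parallel to $\Pi$, unique when $\Pi$ is not horizontal), and this is precisely why the conclusion in the tilted case is a single symmetry plane rather than rotational symmetry.

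Second, your analysis of the first contact misses the case that actually requires the constant-angle hypothesis. You only treat boundary contacts at points $p\in\Gamma\cap P_{t_0}$. But since $\Gamma$ is no longer assumed to split into two graphs over $\Pi\cap P$ (the hypothesis of Theorem \ref{ts1} that is dropped here), the first contact can occur at a point $p\in\partial M(t_0)^*\cap\Gamma$ with $p\notin P_{t_0}$: the reflected boundary curve, which stays inside $\Pi$, can touch $\Gamma$ away from the reflection plane. This is the paper's case (3) and the heart of the proof: at such a first contact the two boundary curves in $\Pi$ are tangent, and the equality of contact angles then forces the full tangent planes of $M(t_0)^*$ and $M(t_0)^-$ to coincide at $p$, so the boundary version of the tangency principle applies. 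Conversely, at the points you do consider, $p\in\Gamma\cap P_{t_0}$, both caps have a corner there (their boundaries consist of an arc of $\Gamma$ and an arc of $M\cap P_{t_0}$), so the smooth boundary version of the tangency principle you invoke does not apply directly; one needs the tangency principle at a corner point (Serrin \cite{se}), which is the paper's case (4). So the skeleton is correct, but as written the argument rests on an invariance statement that fails for your choice of direction and omits, or mislocates, exactly the boundary contacts that the constant-angle hypothesis was introduced to control.
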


  \begin{proof} The proof uses again the Alexandrov reflection method. We know by Proposition    \ref{prl} that the interior of $M$ lies contained in one side of  $\Pi$ and $M$ is not tangent to $\Pi$.    With  the same notation as in Theorem  \ref{ts1}, we only indicate  the differences. Let $\vec{w}$ be a horizontal vector and parallel to $\Pi$: if $\Pi$ is not a horizontal plane, this  direction $\vec{w}$ is unique. Let $\{P_t\}_{t\in\r}$ be the foliation of $\r^3$ by planes orthogonal to $\vec{w}$ which, after a rotation about the $z$-axis and a horizontal translation, we suppose that $P_t$ is the plane of equation $x=t$ (and $\vec{w}=e_1$).
  
  We begin with the reflection method as in Theorem  \ref{ts1}. After the first time $t=t_1$ where   $P_{t_1}$ touches $M$, we arrive until $t=t_2$ where $\mbox{int}(M(t)^*)$ is  outside $W$ for every $t<t_2$. There  possibilities are now:
  \begin{enumerate}
  \item There exists $p\in \mbox{int}(M(t_2)^*)\cap \mbox{int}(M(t_2)^{-})$.
  \item $M$ is orthogonal to $P_{t_2}$ at some point $p\in \partial M(t_2)^*\cap \partial M(t_2)^{-}\setminus\Gamma$.
    \item There exists $p\in\partial M(t_2)^*\cap\partial M(t_2)^{-}\cap \Gamma$ and $p\not\in P_{t_2}$.
   \item $M$ is orthogonal to $P_{t_2}$ at some point $p\in \partial M(t_2)^*\cap \partial M(t_2)^{-}\cap\Gamma$.

   \end{enumerate}

The cases (1) and (2) appeared in Theorem  \ref{ts1} and the tangency principle  implies  that $P_{t_2}$ is a plane of symmetry of $M$.  In case (3), we have $T_p\partial M(t_2)^*=T_p\partial M(t_2)^{-}$ and the surfaces $M(t_2)^*$ and $M(t_2)^{-}$ are tangent at $p$ because the contact angle between $M(t_2)^*$ and $P_{t_2}$ coincides with the one between $M(t_2)^{-}$ and $P_{t_2}$: here we use that $P_{t_2}$ is a vertical plane and that $\Pi$ is invariant by reflection about $P_{t_2}$. Then we  apply the boundary version of the tangency principle to conclude that   $P_{t_2}$ is a plane of symmetry of $M$. In case (4), the tangency principle at a corner point (\cite{se}) proves that $M(t_2)^{-}=M(t_2)^*$ and thus, $P_{t_2}$ is a plane of symmetry of $M$ again.
   \end{proof}
   
As in Corollary   \ref{c-c1}, we conclude:
  \begin{corollary}\label{c-c2} Let  $M$ be an  embedded compact singular minimal surface whose boundary is contained in a horizontal plane $\Pi$.   If $M$ makes a constant contact angle with $\Pi$ along $\partial M$, then  $\partial M$ is a circle and $M$ is a surface of revolution with vertical  rotation axis.
 \end{corollary}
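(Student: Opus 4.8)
The plan is to derive the corollary from Theorem \ref{ts2} by exploiting the special feature of a \emph{horizontal} boundary plane: when $\Pi$ is orthogonal to $\vec{a}$, \emph{every} horizontal direction is parallel to $\Pi$, so the reflection argument of Theorem \ref{ts2} can be run in all directions simultaneously rather than along a single distinguished direction.

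First I would apply Theorem \ref{ts2} for each horizontal unit vector $\vec{w}$. Since $\Pi$ is horizontal, every such $\vec{w}$ is parallel to $\Pi$, so the hypotheses of Theorem \ref{ts2} are met for all of them; inspecting its proof (where the sweeping foliation $\{P_t\}$ is taken orthogonal to $\vec{w}$), the symmetry plane it produces is a \emph{vertical} plane $P_{\vec{w}}$ orthogonal to $\vec{w}$. Thus $M$ is invariant under reflection about a vertical plane $P_{\vec{w}}$ for every horizontal direction $\vec{w}$, and distinct directions yield planes with distinct normals.

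Next I would show all these planes share a common vertical axis by a centroid argument. Let $c=\frac{1}{A(M)}\int_M p\,dM$ be the centroid of $M$. Any rigid motion $T$ of $\r^3$ with $T(M)=M$ fixes $c$, since $c$ is defined intrinsically by the immersion; in particular each reflection about $P_{\vec{w}}$ fixes $c$, forcing $c\in P_{\vec{w}}$ for every $\vec{w}$. As the $P_{\vec{w}}$ are all vertical planes through the single point $c$, they all contain the vertical line $L$ passing through $c$. Finally, if $\vec{w}_1$ and $\vec{w}_2$ make an angle $\phi$, the composition of the reflections about $P_{\vec{w}_1}$ and $P_{\vec{w}_2}$, which share the axis $L$, is a rotation about $L$ through angle $2\phi$; letting $\phi$ vary produces every rotation about $L$, so $M$ is invariant under the full rotation group about $L$. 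Hence $M$ is a surface of revolution with vertical axis $L$, and its boundary $\partial M\subset\Pi$, being a horizontal curve invariant under all these rotations, is a circle centered on $L$.

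I expect the only point needing care to be the first step: confirming that the symmetry plane furnished by Theorem \ref{ts2} for a given direction $\vec{w}$ is genuinely orthogonal to $\vec{w}$, so that varying $\vec{w}$ really varies the plane and we obtain vertical symmetry planes of every orientation. This is built into the construction via the foliation orthogonal to $\vec{w}$, and once it is in hand the remaining steps---that the centroid lies on every symmetry plane, and that two reflections across planes through a common line compose to a rotation about that line---are elementary facts about Euclidean isometries.
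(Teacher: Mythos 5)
Your proposal is correct and takes essentially the same route as the paper, which deduces this corollary from Theorem \ref{ts2} exactly as Corollary \ref{c-c1} is deduced from Theorem \ref{ts1}: since $\Pi$ is horizontal, the Alexandrov reflection argument of Theorem \ref{ts2} can be run in every horizontal direction $\vec{w}$, producing a vertical symmetry plane of $M$ orthogonal to each $\vec{w}$. Your centroid argument simply makes explicit the step, left implicit in the paper, that all these planes contain a common vertical line, so that the reflections compose to give the full rotation group about that axis (and, as in the paper, rotational invariance of the planar boundary is then read off as its being a circle).
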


Theorems \ref{ts1} and \ref{ts2} extend when the weighted mean curvature is constant, but we need to assume that the surface is contained in one side of the boundary plane: for singular minimal surfaces, this property  is assured by Proposition    \ref{prl}. So, we  prevent that  during the reflection process,   the first contact point between the reflected surface and the initial surface (for $t=t_2$) may  occur between   interior  and    boundary points, so we can not  apply the tangency principle. In order to simplify the statements, we only give the result when $\Pi$ is a horizontal plane and $\Gamma$ is a circle in Theorem  \ref{ts1}.

\begin{corollary}  Let  $M$ be an  embedded compact surface with constant weighted mean curvature. Suppose one of the next assumptions:
\begin{enumerate}
\item The boundary of $M$ is a circle contained in a horizontal plane.
\item The boundary of $M$ is   is contained in a horizontal plane $\Pi$ and  $M$ makes a constant contact angle with $\Pi$ along $\partial M$.
\end{enumerate}
If $M$ is contained in one side of the boundary plane, then  $M$ is a surface of revolution about a vertical axis.
\end{corollary}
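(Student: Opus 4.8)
The plan is to run the Alexandrov reflection method exactly as in the proofs of Theorems \ref{ts1} and \ref{ts2}, the only structural change being that the one-sidedness of $M$ with respect to the boundary plane, which for singular minimal surfaces was furnished automatically by Proposition \ref{prl}, is now supplied by hypothesis. First I would normalize coordinates so that $\vec a=e_3$ and $\Pi$ is the horizontal plane $\{z=c\}$. The reflection $\sigma$ across any vertical plane $P_t$ (a plane containing the direction $\vec a$) is an isometry that fixes the height function $\langle q,\vec a\rangle$ and sends $\vec a$ to itself; consequently it preserves both $H$ and $\langle N,\vec a\rangle$, and therefore the weighted mean curvature $H_\varphi=H-\frac{\alpha}{2}\langle N,\vec a\rangle/\langle q,\vec a\rangle$. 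Thus $\sigma(M)$ again has the same constant weighted mean curvature, which is exactly what is needed in order to compare a reflected piece with $M$ through the tangency principle. Moreover $\sigma$ leaves $\Pi$ invariant, so the boundary of any reflected piece remains in $\Pi$.

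Next, using embeddedness together with the one-sidedness hypothesis, $M$ and the planar region $\Omega\subset\Pi$ bounded by $\Gamma=\partial M$ form a closed embedded surface bounding a $3$-domain $\mathcal D$, as in Theorem \ref{ts1}. For a fixed horizontal direction $\vec w$, I would take the foliation $\{P_t\}$ by vertical planes orthogonal to $\vec w$, bring $P_t$ in from infinity, reflect the part $M(t)^{+}$ to obtain $M(t)^{*}$, and locate the critical value $t_2$ at which $\mbox{int}(M(t)^{*})$ ceases to lie in $\mathcal D$. At $t=t_2$ the same case analysis as in Theorems \ref{ts1} and \ref{ts2} applies: an interior--interior tangency is handled by the interior tangency principle, a boundary tangency by its boundary version, and, in the constant-contact-angle situation, a corner contact by the tangency principle at a corner point (\cite{se}). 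In each case one concludes that $P_{t_2}$ is a plane of symmetry of $M$. Under assumption (1), $\Gamma$ is a circle; since it is symmetric across, and splits into two graphs over, the vertical plane through its center in direction $\vec w$, the symmetry plane produced must be precisely that plane. Letting $\vec w$ range over all horizontal directions gives symmetry of $M$ across every vertical plane through the center, hence $M$ is a surface of revolution about the vertical axis through the center. Under assumption (2), the constant contact angle makes the reflected and original boundaries tangent with matching tangent planes along $\Pi$ (cases (3) and (4) of Theorem \ref{ts2}), yielding a symmetry plane orthogonal to each horizontal direction; these planes share a common vertical axis, forcing $\partial M$ to be a circle and $M$ to be rotational, as in Corollary \ref{c-c2}.

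The main obstacle, and the precise reason the one-sidedness hypothesis cannot be dropped, is to guarantee that the first contact at $t=t_2$ is of a type the tangency principle can treat, that is, that it is \emph{not} a contact between an interior point of one copy and a boundary point of the other. This is exactly the degeneracy flagged in the discussion preceding the statement. The point is that all boundary points of $M$ and of its reflection lie on $\Pi$, whereas one-sidedness, read as in Proposition \ref{prl} (interior strictly to one side of $\Pi$, and $M$ nowhere tangent to $\Pi$ along $\partial M$), keeps the interior off $\Pi$; hence an interior point can never coincide with a boundary point, and the admissible contacts at $t_2$ reduce to the interior, boundary and corner cases above. For general constant weighted mean curvature this strict separation is not automatic, which is why it is imposed as a hypothesis, in contrast to the singular minimal case where Proposition \ref{prl} supplies it. Once this separation is secured, the argument is word-for-word that of Theorems \ref{ts1} and \ref{ts2}.
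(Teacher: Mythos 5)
Your overall strategy is exactly the paper's: rerun the Alexandrov reflection method of Theorems \ref{ts1} and \ref{ts2}, with the one-sidedness hypothesis replacing Proposition \ref{prl} to rule out interior--boundary contacts. But there is one genuine gap, and it is precisely the point that constitutes the paper's entire written proof of this corollary: the \emph{orientation condition} in the tangency principle. As stated in Section \ref{s-pre}, the tangency principle requires the Gauss maps of the two surfaces to \emph{coincide} at the contact point, and the paper explicitly notes that this condition can be dropped only for singular minimal surfaces, because $H_\varphi=0$ holds for either choice of orientation. When $H_\varphi=c\neq 0$ this is no longer true: reversing the orientation flips the signs of both $H$ and $\langle N,\vec a\rangle$, hence sends $H_\varphi$ to $-H_\varphi$. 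So your assertion that the reflection preserving $H_\varphi$ ``is exactly what is needed in order to compare a reflected piece with $M$ through the tangency principle'' is not sufficient: at the first contact point $p$ between $M(t_2)^*$ and $M(t_2)^-$ you must also verify that the Gauss map carried by the reflected piece agrees with the Gauss map of $M(t_2)^-$ at $p$; if they were opposite, the two pieces would have weighted mean curvatures $c$ and $-c$ with respect to a common normal, and the comparison could fail.

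The missing step is supplied as follows (and this is what the paper's proof consists of): orient $M$ so that $N$ points towards the enclosed domain $\mathcal{D}$ bounded by $M\cup\Omega$. Since at the critical time $t_2$ the reflected piece $M(t_2)^*$ lies inside $\overline{\mathcal{D}}$ and is tangent to $M(t_2)^-$ at $p$, the pushed-forward orientation of $M(t_2)^*$ at $p$ also points towards $\mathcal{D}$, hence coincides with $N(p)$. With the orientations matched, both surfaces have the same constant $H_\varphi=c$ near $p$ and the interior, boundary, and corner versions of the tangency principle apply as you describe. The remainder of your argument (the role of strict one-sidedness in excluding interior--boundary contacts, and the derivation of rotational symmetry in cases (1) and (2)) matches the paper.
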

 \begin{proof} The proof runs with similar arguments.  We only need  the next observation. For applying the tangency principle, one requires that the orientations agree at the contact point $p$ between $M(t_2)^*$ and $M(t_2)^{-}$. For this,  we orient $M$ so $N$ points towards the domain $\mathcal{D}$. Then the Gauss map of $M(t_2)^*$ at   $p$ points towards $\mathcal{D}$ again, so it coincides with  $N$ at $M(t_2)^{-}$ and the tangency principle applies.
 \end{proof}
 
   \section{Non-existence results of compact singular minimal surfaces spanning two curves}\label{s-b2}

 For $\alpha>0$,   we show in this section  results of non-existence of  compact connected $\alpha$-singular minimal surfaces     spanning two given curves $\Gamma_1$ and $\Gamma_2$. As a general conclusion, if $\Gamma_1$ and $\Gamma_2$  are sufficiently far apart, it is not possible that $\Gamma_1$ and $\Gamma_2$ can bound a compact connected singular minimal surface. The   value of this separation  depends not only on the distance  between $\Gamma_1$ and $\Gamma_2$, but also on the height of these curves with respect to the plane $\r^3_0(\vec{a})$. We will obtain   two  types   of results depending if the separation is   along the direction of $\vec{a}$, or with respect to  an orthogonal direction to $\vec{a}$.    After a change of coordinates, we suppose   that the density vector is $\vec{a}=(0,0,1)$.
  
 We need  the next definitions. A   {\it slab} $W\subset\r^3$  is the domain  between two   parallel planes $P_1$ and $P_2$. The   width of a slab is  the distance between $P_1$ and $P_2$. If $\Gamma_1$ and $\Gamma_2$ are two closed curves, we say that a slab $W$ separates $\Gamma_1$ from $\Gamma_2$ if $\Gamma_1$ and $\Gamma_2$ are contained  in different components of $\r^3\setminus W$.
On the other hand, the  {\it height} of a bounded set $A\subset\r_{+}^3$ is   $h(A)=\sup\{z(p):p\in A\}$. 

In the proofs, we will use other singular minimal surfaces as barriers in a comparison argument.   The first surfaces to use are catenary cylinders. We  indicate the main properties of the $\alpha$-catenaries: see details in \cite{lo}.  

 \begin{proposition}\label{p-cate} Let $\alpha>0$. Let $\gamma:I\rightarrow\r^2$ be an $\alpha$-catenary with respect to the vector $(0,1)$.  Then, after a horizontal translation, $\gamma$ is the graph of a function $z=f(x) $, $f:(-R,R)\rightarrow\r^+$, that  satisfies  
 \begin{eqnarray}
 &&\frac{f''(x)}{1+f'(x)^2}=\frac{\alpha}{f(x) },\label{cate2}\\
&& f(0)=z_0,\quad f'(0)=0,\label{cate3}
 \end{eqnarray}
 and $f$ is an even convex  function with a unique minimum at $x=0$. Here $(-R,R)$ is the maximal domain.  Furthermore, 
   \begin{enumerate}
 \item If $\alpha>1$, then $R<\infty$ and $\lim_{x\rightarrow\pm R}f(x)=\infty$. The function $R=R(z_0)$ depending on  $z_0$ is strictly increasing with $\lim_{z_0\rightarrow 0}R=0$ and $\lim_{z_0\rightarrow\infty} R=\infty$.
 \item If $\alpha\in (0,1]$, then $R=\infty$.  Fix $x_0>0$ and let $f(x_0)=f(x_0;z_0)$ be the value of $f$ at $x_0$ as a function on   $z_0$, $z_0$  the initial condition in (\ref{cate3}). Then there exists $z_0(\alpha)$ such that $f(x_0)$ attains a minimum   at  $z_0(\alpha)$, $f(x_0)$ is decreasing in the interval $(0,z_0(\alpha))$, increasing in the interval  $(z_0(\alpha),\infty)$ and  $\lim_{z_0\rightarrow 0}f(x_0)=\lim_{z_0\rightarrow \infty}f(x_0)=\infty$.
     \end{enumerate}
     \end{proposition}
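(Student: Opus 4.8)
The plan is to reduce (\ref{cate}) to an explicit first integral and then extract every assertion from it. Writing $\gamma$ locally as a graph $z=f(x)$ (possible away from points with vertical tangent), its upward unit normal is $\mathbf n=(-f',1)/\sqrt{1+f'^2}$ and its signed curvature is $f''/(1+f'^2)^{3/2}$; substituting into (\ref{cate}) with $\vec w=(0,1)$ and simplifying gives precisely (\ref{cate2}). Convexity is then immediate, since $f''=(\alpha/f)(1+f'^2)>0$, so $f$ has at most one critical point; placing it at the origin by a horizontal translation yields the normalization (\ref{cate3}), and uniqueness for the initial value problem (\ref{cate2})--(\ref{cate3}) shows $f(-x)$ solves the same problem, hence $f$ is even. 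The decisive step is to treat $p=f'$ as a function of $f$, so that (\ref{cate2}) separates as $p\,dp/(1+p^2)=\alpha\,df/f$; integrating with the data (\ref{cate3}) produces
\begin{equation*}
1+f'(x)^2=\left(\frac{f(x)}{z_0}\right)^{2\alpha}.
\end{equation*}
This identity forces $f\ge z_0$, so $z_0$ is the minimum value, attained only at $x=0$, and it guarantees that $\gamma$ remains a graph on the maximal symmetric interval $(-R,R)$.

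From the first integral, on $x\ge 0$ we have $f'=\sqrt{(f/z_0)^{2\alpha}-1}$, whence, after the substitution $u=\tau/z_0$,
\begin{equation*}
x=\int_{z_0}^{f(x)}\frac{d\tau}{\sqrt{(\tau/z_0)^{2\alpha}-1}}, \qquad R=z_0\int_1^\infty\frac{du}{\sqrt{u^{2\alpha}-1}}=:z_0\,I(\alpha).
\end{equation*}
The finiteness of $R$ is governed entirely by $I(\alpha)$: near $u=1$ the integrand is of order $(u-1)^{-1/2}$ and always integrable, while near $u=\infty$ it behaves like $u^{-\alpha}$, so $I(\alpha)<\infty$ if and only if $\alpha>1$. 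This settles part (1): for $\alpha>1$ one has $R=z_0 I(\alpha)<\infty$ with $f\to\infty$ as $x\to\pm R$, and since $R$ is a fixed positive multiple of $z_0$ it is strictly increasing with the limits $0$ and $\infty$; for $\alpha\in(0,1]$ the integral diverges at infinity, so $R=\infty$.

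For the remainder of part (2), fix $x_0>0$, set $G(s)=\int_1^s du/\sqrt{u^{2\alpha}-1}$ and $y=f(x_0)$, so the integral relation reads $x_0=z_0\,G(y/z_0)$. Writing $s=y/z_0$ and differentiating implicitly in $z_0$, the identity $G'(s)=1/\sqrt{s^{2\alpha}-1}$ reduces the result to
\begin{equation*}
\frac{dy}{dz_0}=s-G(s)\sqrt{s^{2\alpha}-1}=:\psi(s).
\end{equation*}
I would then differentiate $\psi$: the term $G'(s)\sqrt{s^{2\alpha}-1}$ cancels the constant $1$, leaving $\psi'(s)=-\alpha\,G(s)\,s^{2\alpha-1}/\sqrt{s^{2\alpha}-1}<0$ for $s>1$, so $\psi$ is strictly decreasing. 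Since $\psi(s)\to 1$ as $s\to 1^+$ (both factors of the second term vanish like $\sqrt{s-1}$) and $\psi(s)\to-\infty$ as $s\to\infty$ (using $G(s)\sim s^{1-\alpha}/(1-\alpha)$ for $\alpha<1$, resp. $G(s)\sim\log s$ for $\alpha=1$), $\psi$ has a unique zero $s^\ast$. Because $z_0=x_0/G(s)$ is a decreasing bijection of $s$, the sign of $dy/dz_0$ switches exactly once, from negative for small $z_0$ (large $s$) to positive for large $z_0$ (small $s$); hence $f(x_0)$ is strictly decreasing then strictly increasing, with a unique minimum at $z_0(\alpha)=x_0/G(s^\ast)$. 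The same relation gives the two limits: $z_0\to 0$ forces $s\to\infty$, while $z_0\to\infty$ forces $s\to 1^+$ with $y\approx z_0$, so in both cases $f(x_0)\to\infty$.

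The graph reduction, convexity, and the convergence dichotomy for $R$ are routine once the first integral is available; the real work is the monotonicity statement in part (2). A priori it is not clear that $f(x_0;z_0)$ has a single interior minimum, and the clean argument hinges on the unexpected cancellation that makes $\psi'(s)$ manifestly negative. I expect the main effort to go into justifying that cancellation and verifying the two boundary behaviours $s\to 1^+$ and $s\to\infty$, which together fix the sign of $dy/dz_0$ at the extremes and thereby pin down the monotonicity.
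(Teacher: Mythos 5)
Your proposal is essentially correct, but there is no internal proof to compare it with: the paper states Proposition \ref{p-cate} without proof, presenting it as a summary of known properties of $\alpha$-catenaries and deferring all details to \cite{lo}. Your quadrature argument is the natural self-contained route, and its key steps check out: the reduction of (\ref{cate}) to (\ref{cate2}), the first integral $1+f'^2=(f/z_0)^{2\alpha}$, the dichotomy $\int_1^\infty (u^{2\alpha}-1)^{-1/2}\,du<\infty$ exactly when $\alpha>1$ (which gives part (1), including the linearity $R=z_0 I(\alpha)$), and for part (2) the implicit-differentiation identity $dy/dz_0=\psi(s)=s-G(s)\sqrt{s^{2\alpha}-1}$ together with the cancellation yielding $\psi'(s)=-\alpha G(s)s^{2\alpha-1}/\sqrt{s^{2\alpha}-1}<0$ and the endpoint values $\psi(1^+)=1$, $\psi(s)\to-\infty$; as a consistency check, for $\alpha=1$ your condition $\psi(s^\ast)=0$ with $s=\cosh(x_0/z_0)$ is precisely the paper's equation $\tanh(x_0/z_0)=z_0/x_0$. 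Two routine points should be tightened. First, you only show $f$ has \emph{at most} one critical point and then translate ``it'' to the origin; existence needs a short argument, e.g.\ a graph solution with $f'\neq 0$ everywhere would approach its infimum with $f'\to 0$ at one end of its maximal interval, contradicting $f''=(\alpha/f)(1+f'^2)$ being bounded below there; relatedly, vertical tangencies are excluded because, in the arclength--angle formulation $\theta'=\alpha\cos\theta/z$, a solution tangent to a vertical line coincides with that line by ODE uniqueness. Second, for the limit $z_0\to 0$ it is not enough to say $s\to\infty$, since $y=z_0s$ has a vanishing factor; you must combine this with the asymptotics $G(s)\sim s^{1-\alpha}/(1-\alpha)$ (resp.\ $G(s)\sim\log s$ when $\alpha=1$) that you already stated, so that $f(x_0)=x_0\,s/G(s)\to\infty$. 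With these details filled in, your write-up constitutes a complete proof of a statement the paper itself leaves to an external reference.
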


The item (1) says that if $\alpha>1$, then the graph of $f$ is asymptotic to the vertical lines $x=\pm R$. The item (2) generalizes the case of the catenary. Indeed, if  $\alpha=1$, the solution   of (\ref{cate2})-(\ref{cate3}) is the catenary $f(x)=z_0\cosh(x/z_0)$ and we can find the value  $z_0(\alpha)$.  It is not difficult  to see that the minimum of $f(x_0;z_0)$  corresponds with the  (unique) positive root $z_0$ of the equation
$$
\tanh\left(\frac{x_0}{z_0}\right)=\frac{z_0}{x_0}
$$
 and  $f(x_0;z_0(\alpha)) \simeq 1.5088\ x_0$. We now prove the first result of non-existence. 

        \begin{theorem}\label{t-n1} Let $\alpha>0$ and  $m>0$. There exists $h_0=h_0(m,\alpha)>0$ depending only on $m$ and $\alpha$,  such that if  $\Gamma_1$ and $\Gamma_2$ are two   closed curves in $\r^3_+$ and  there is a vertical slab of width $m$ separating $\Gamma_1$ from $\Gamma_2$ with $h(\Gamma_1\cup\Gamma_2)\leq h_0$, then there does not  exist a  compact connected $\alpha$-singular minimal surface with boundary $\Gamma_1\cup\Gamma_2$.
     \end{theorem}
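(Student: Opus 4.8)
The plan is to argue by contradiction, using $\alpha$-catenary cylinders as barriers together with the tangency principle. Suppose such a compact connected $M$ exists. After a rotation about a vertical axis and a horizontal translation I normalize $\vec{a}=(0,0,1)$ and take the separating slab to be $W=\{0\le x\le m\}$, with $\Gamma_1\subset\{x<0\}$ and $\Gamma_2\subset\{x>m\}$. Since $\alpha>0$, Proposition \ref{pr-1} gives $\max_M\langle p,\vec{a}\rangle=\max_{\partial M}\langle p,\vec{a}\rangle=h(\Gamma_1\cup\Gamma_2)\le h_0$, so the whole surface is confined to the low region $\{0<z\le h_0\}$. Because $M$ is connected and its boundary meets both components of $\r^3\setminus W$, the projection of $M$ to the $x$-axis is an interval containing $[0,m]$; in particular $M$ has points over every $x\in[0,m]$ and crosses the mid-plane $x=m/2$ at some point $q$ with $0<z(q)\le h_0$.

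The barrier is the $\alpha$-catenary cylinder $\Sigma_{z_0}$ with rulings along the $y$-axis whose generating curve is the solution $z=g(x-\tfrac{m}{2})$ of (\ref{cate2})--(\ref{cate3}) with vertex height $z_0=g(0)$, placed so that its vertex line sits over the mid-plane of the slab; by construction $H_\varphi^{\Sigma}=0$. The quantitative input is Proposition \ref{p-cate}: for $\alpha\in(0,1]$ item (2) yields a positive constant $c(\alpha)$ with $g(x_0)\ge c(\alpha)\,x_0$ for every $\alpha$-catenary and every $x_0$ in its domain (with $c(1)\simeq 1.5088$), and for $\alpha>1$ the analogous positive lower bound follows from item (1) together with the invariance of (\ref{mean}) under dilations centered on $\r^3_0(\vec{a})$. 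I then set $h_0:=h_0(m,\alpha)=\tfrac14 c(\alpha)\,m$. The decisive consequence is that at the walls $x=0$ and $x=m$, which lie at horizontal distance $m/2$ from the vertex line, the trough has height $g(m/2)\ge c(\alpha)\tfrac{m}{2}=2h_0>h_0\ge h(\Gamma_1\cup\Gamma_2)$ for \emph{every} choice of $z_0$. Hence, over the region $\{z\le h_0\}$ the trough lies strictly inside the open slab, and so $\Sigma_{z_0}$ has no points in $\{x<0,\ z\le h_0\}$ nor in $\{x>m,\ z\le h_0\}$; in particular it never meets $\Gamma_1\cup\Gamma_2$.

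Now I sweep through this family by letting the vertex height $z_0$ decrease. For $z_0>h(\Gamma_1\cup\Gamma_2)$ the trough floor lies above every point of $M$, so $\Sigma_{z_0}\cap M=\emptyset$. On the other hand, for $z_0<z(q)$ the point $q$ lies strictly above the trough floor, while at an $x$ near the edge of the generating catenary's domain (or at the walls, where $g(m/2)>h_0$) the corresponding point of $M$ lies strictly below the trough; by connectedness $M$ must cross $\Sigma_{z_0}$. Let $z_0^{*}\in(0,h(\Gamma_1\cup\Gamma_2)]$ be the largest vertex height with $\Sigma_{z_0}\cap M\neq\emptyset$. At $z_0=z_0^{*}$ the two surfaces are tangent at a point $p$ with $M$ lying locally on one side; moreover $p$ is interior to $\Sigma_{z_0^{*}}$ (a complete surface) and, by the previous paragraph, interior to $M$. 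Since both surfaces satisfy $H_\varphi=0$, the tangency principle and the analyticity of singular minimal surfaces force $M\subset\Sigma_{z_0^{*}}$. This is impossible: it would put $\Gamma_1=\phi(\partial M)\cap\{x<0\}$ on the trough, whereas $\Sigma_{z_0^{*}}$ has no point in $\{x<0,\ z\le h_0\}\supset\Gamma_1$. This contradiction proves the theorem.

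The heart of the argument—and the only place where both the width $m$ and the height bound $h_0$ enter—is guaranteeing that the extremal contact is an \emph{interior} point of $M$, i.e.\ that the barrier never reaches the boundary curves. This is exactly what the threshold $h_0<\tfrac12 c(\alpha)m$ secures through Proposition \ref{p-cate}: a catenary confined below height $h_0$ cannot reach out to horizontal distance $m/2$, so the trough is forced to rise above $\Gamma_1\cup\Gamma_2$ precisely at and beyond the walls of the slab, keeping all possible contacts inside the open slab, where every point of $M$ is interior. The remaining technical point is the verification that $c(\alpha)>0$ persists for $\alpha>1$ via Proposition \ref{p-cate}(1) and the dilation scaling, which fixes the explicit value of $h_0(m,\alpha)$.
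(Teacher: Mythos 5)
Your proof is correct and, at its core, follows the same route as the paper's: confine $M$ below height $h_0$ by Proposition \ref{pr-1}, use $\alpha$-catenary cylinders centered over the slab's mid-plane as barriers, calibrate $h_0$ so that every member of the barrier family stays above height $h_0$ at and beyond the slab walls (so that any contact with $M$ is forced to occur at interior points of both surfaces), and sweep the family down to an extremal contact parameter, where the tangency principle plus analyticity force $M$ inside the barrier, contradicting the position of $\partial M$. The genuine difference is the treatment of $\alpha>1$. The paper splits into two cases: for $\alpha\in(0,1]$ it takes $h_0=\min_{z_0}f(m/2;z_0)=f(m/2;z_0(\alpha))$, exactly the quantity you use (your $h_0=\tfrac14 c(\alpha)m$ is simply half this sharp value); but for $\alpha>1$ it changes barrier, choosing the catenary cylinder with $R(z_0)=m/2$, which is asymptotic to the two walls, so it and all its downward dilations never leave the slab and no wall-height estimate is needed at all (there $h_0=z_0$). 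You instead keep a single argument for all $\alpha>0$ by asserting the linear lower bound $g(x_0)\ge c(\alpha)x_0$ also for $\alpha>1$; this is true, and your cited ingredients do suffice --- by dilation invariance $R(z_0)=R(1)\,z_0$, so if $x_0$ lies in the maximal domain then convexity gives $f(x_0;z_0)\ge z_0>x_0/R(1)$ --- but you should record that one-line computation, since Proposition \ref{p-cate}(1) by itself does not state any height bound. Your unified version buys an explicit formula for $h_0$ valid for all $\alpha>0$; the paper's case split avoids the extra bound because its $\alpha>1$ barrier never reaches the walls. Finally, note that both your argument and the paper's rely on the same implicitly justified step: that at the extremal parameter the contact is tangential with $M$ locally on one side (the dilated catenary cylinders do not foliate space, so one-sidedness rests on the standard first-contact/transversality reasoning); your level of rigor there matches the paper's.
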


    \begin{proof} The proof is by contradiction.     We distinguish two cases  depending on  $\alpha$. 
    
    {\it  Case $\alpha  \in (0,1]$}.  From Proposition    \ref{p-cate}, we know that the value $x_0=m/2$ belongs to the maximal domain of any $\alpha$-catenary. Following the notation of Proposition    \ref{p-cate}, consider  $z_0(\alpha)>0$ and set  
    $$h_0=h_0(m,\alpha)=f(m/2;z_0(\alpha)).$$
Suppose that $M$ is a  compact connected $\alpha$-singular minimal   surface with $h(\Gamma_1\cup\Gamma_2)\leq h_0$ and, after a rotation about the $z$-axis and a horizontal translation, we suppose that the vertical slab $W$   separating $\Gamma_1$ from $\Gamma_2$   is defined by the planes  $P_1$ and $P_2$ of equations $x=r$ and $x=-r$, respectively, with $r=m/2$. Let $\gamma$ be  the graphic of the $\alpha$-catenary    in the $xz$-plane  that is the solution of (\ref{cate2})-(\ref{cate3})  and the  catenary cylinder $S_\gamma=\{\gamma(s)+t e_2: s\in (-R,R), t\in\r\}$.      
    
We use  dilations of $S_\gamma$  from the origin $O$, $\lambda>0$. Notice that $\lambda S_\gamma$ is the catenary cylinder based on $\lambda\gamma$, namely,  the solution of  (\ref{cate2}) when $f(0)= \lambda z_0(\alpha)$: see Figure  \ref{fig4}, left.         Recall that the lowest points of $S_\gamma$ is  the horizontal line $L$ of equation $\{x=0, z=z_0(\alpha)\}$ because $\gamma$ is symmetric about the $z$-axis in the $xz$-plane. Let  $\widetilde{\lambda S_\gamma}$ be the part of $\lambda S_\gamma$ contained in the slab $W$ whose boundary  consists of two parallel horizontal lines.     Thus the height of the lowest points of $\widetilde{\lambda S_\gamma}$ is $\lambda z_0(\alpha)$. For $\lambda$ sufficiently large, this height is bigger than $h_0$ and thus  $\widetilde{\lambda S_\gamma}\cap M=\emptyset$. Let $\lambda\searrow 0$. Since the plane of equation $x=0$ divides $W$ in two components and $\lambda z_0(\alpha)\rightarrow 0$, there exists a first contact with $M$ at $\lambda=\lambda_0$, that is, $\widetilde{\lambda S_\gamma}\cap M=\emptyset$ for $\lambda>\lambda_0$ and $\widetilde{\lambda_0 S_\gamma}\cap M\not=\emptyset$. On the other hand, the height  of the boundary of $\widetilde{\lambda_0 S_\gamma}$ is bigger than $h_0$, hence the contact point $p$ between $M$ and $\widetilde{\lambda_0 S_\gamma}$ must an interior point. See Fig \ref{fig4}, right. Because $M$ lies in one side of $\widetilde{\lambda_0 S_\gamma}$ around $p$,  the tangency principle implies $M\subset \widetilde{\lambda_0 S_\gamma}$: this is a contradiction because $\partial M=\Gamma_1\cup\Gamma_2$ is outside $W$.

{\it  Case $\alpha>1$}.   By Proposition    \ref{p-cate}, let $z_0>0$ be the unique number such that $R(z_0)=m/2$ and let 
$$h_0=h_0(m,\alpha)=z_0.$$
Let $M$ be a  compact connected $\alpha$-singular minimal   surface with $h(\Gamma_1\cup\Gamma_2)\leq h_0$. We follows the same steps than in the  case $\alpha\in (0,1]$.  By the definition of $R(z_0)$,  the catenary cylinder $S_\gamma$ is asymptotic to the planes $P_1$ and $P_2$. Because $h(\Gamma_1\cup\Gamma_2)\leq h_0$, Proposition    \ref{prl} asserts that $M$ lies  below the plane of equation $z=h_0$, in fact, $M\cap S_\gamma=\emptyset$: if there is an intersection point $p\in M\cap S_\gamma$, then necessarily $p$ is a point with the lowest height of $S_\gamma$ being $M$ and $S_\gamma$ tangent at that point, which is impossible by the tangency principle.
 
        Now we consider the dilations $\lambda S_\gamma$ of $S_\gamma$.    Letting $\lambda\searrow 0$,   $\lambda S_\gamma$ have the property that the height of  the lowest points goes to $0$, $\lambda S_\gamma$ is asymptotic to two vertical planes parallel to $P_i$  and both planes are contained in the slab $W$ because $\lim_{\lambda\rightarrow 0}R(\lambda z_0)=0$.  Then there is a first number $\lambda=\lambda_1$ such that $\lambda_1 S_\gamma$ intersects $M$ at some interior point: a contradiction again by  the tangency principle.
          \end{proof}

\begin{figure}[hbtp]
\begin{center}
\begin{center}\resizebox{\textwidth}{!}{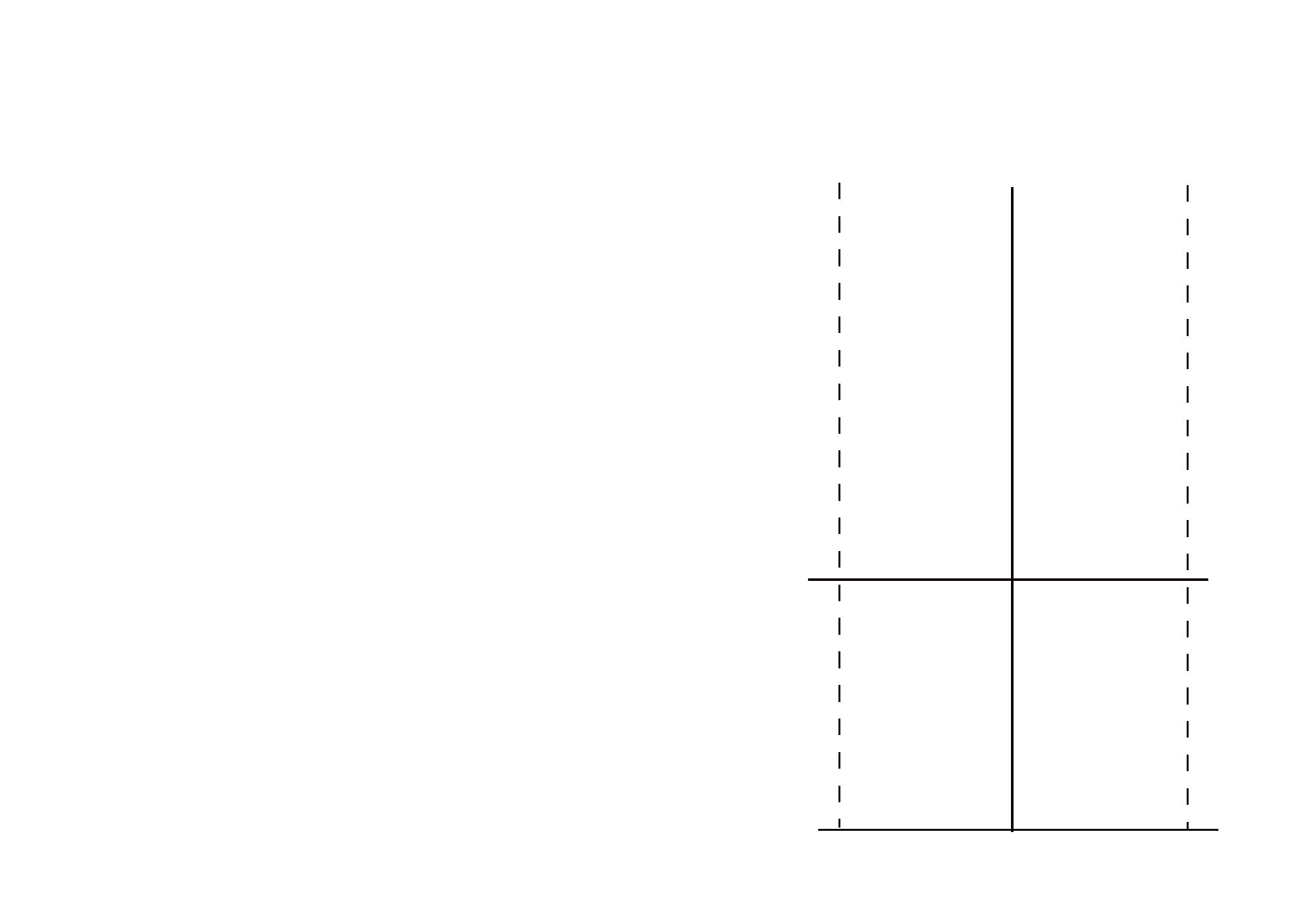}\end{center}
\end{center}
\caption{Case $\alpha=1$. Left: the catenaries $\lambda \gamma$. Right: the contact point between $M$ and $\lambda S_{\gamma}$}\label{fig4}
\end{figure}

\begin{remark}
We point out the existence of non-compact rotational  surfaces spanning  two curves which are separated by a vertical slab. The next example is for  $\alpha=1$ and $\vec{a}=(0,0,1)$. Following   \cite{lo}, there exists a   rotational $1$-singular minimal surface about the $x$-axis  where the generating curve $\gamma$  is   a $2$-catenary (see Figure  \ref{figex3}).  Then   $\gamma(x)=(x,0,z(x))$, $x\in (-R,R)$ and $z(x)$ satisfies (\ref{cate2})-(\ref{cate3}) for $\alpha=2$. If $M_\gamma$ is the surface obtained by rotating $\gamma$, then  $\overline{M_\gamma}\cap\r^3_0\not=\emptyset$, indeed, 
$$\overline{M_\gamma}\cap\r_0^3=\{(x,z(x),0):x\in(-R,R)\}\cup\{(x,-z(x),0): x\in(-R,R)\}.$$
 Fix $0<r<R$ and consider the part of $M_\gamma$ included in  the vertical slab $W$ of equation $-r\leq x\leq r$. Then  $\partial M_\gamma$ is formed by two half-circles in the planes $x=\pm r$ centered at the points $(\pm r,0,0)$ and  radii  $z(r)$. Here  $h(\partial M_\gamma)=z(r)$. However if we compare with the catenary cylinders utilized in the proof of Theorem  \ref{t-n1}, the first contact point  with $M_\gamma$ occurs at some boundary point of $M_\gamma$ and the tangency principle does not apply. This is because $h(\partial M_\gamma)>h_0(m)$ where $r=m/2$. A numerical example is the following.  Consider  $z_0=1$ in (\ref{cate3}). Then $R\simeq1.31103$. If we take $r=1$, then $z(1)\simeq3.21815$. On the other hand, the value $h_0(1)$  is $f(1)\simeq	1.50880$.
\end{remark}

The following result   refers to the special case that the boundary of the surface is formed by two curves contained  in different horizontal planes and separated by a vertical plane.   The following result holds for any $\alpha$.

  \begin{theorem}\label{t-ros}   Let  $\Gamma_1$ and $\Gamma_2$ be two closed curves in $\r^3_{+}$ contained in different horizontal planes. If there exists a vertical plane separating $\Gamma_1$ from $\Gamma_2$, then there does not exist a compact connected $\alpha$-singular minimal surface with boundary $\Gamma_1\cup\Gamma_2$.
     \end{theorem}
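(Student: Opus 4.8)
The plan is to argue by contradiction using a barrier comparison governed by the tangency principle, in the same spirit as Theorem \ref{t-n1} (and, like that result, without any embeddedness hypothesis). After a rotation about the $z$-axis and a horizontal translation I normalise $\vec a=(0,0,1)$ and take the separating vertical plane to be $\{x=0\}$; relabelling if necessary, $\Gamma_1\subset\{x<0\}\cap\{z=c_1\}$ and $\Gamma_2\subset\{x>0\}\cap\{z=c_2\}$ with $c_1<c_2$. Suppose a compact connected $\alpha$-singular minimal surface $M$ with $\partial M=\Gamma_1\cup\Gamma_2$ existed. Since $M$ is connected and its two boundary components lie strictly on opposite sides of $\{x=0\}$, the surface $M$ must meet $\{x=0\}$; since $\partial M$ lies in two distinct horizontal planes, $M$ is not contained in a vertical plane, so Proposition \ref{prv} applies and no horizontal linear functional attains an interior extremum on $M$, while Proposition \ref{pr-1} confines the height of $M$ between the heights of its boundary components.

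As barrier I would use an $\alpha$-catenary cylinder $S$ with horizontal rulings (along $e_2$) and profile the planar $\alpha$-catenary of Proposition \ref{p-cate}; for $\alpha<0$ the analogous concave profile, solving the sign-reversed version of (\ref{cate2}), serves the same purpose. The decisive feature, valid for every $\alpha\neq0$, is that $S$ is an $\alpha$-singular minimal surface which is \emph{not} a vertical plane: hence if some position of $S$ were tangent to $M$ at a common interior point with $M$ lying locally to one side, the tangency principle together with analyticity would force $M\subset S$, contradicting $\partial M\not\subset S$. I would then place $S$ thin and centred over the gap $\{x=0\}$ and move it through dilations $\lambda S$ from a point of $\r^3_0(\vec a)$ in that gap (composed, if needed, with an $e_1$-translation), each $\lambda S$ again being $\alpha$-singular minimal. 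For $\lambda$ large the barrier sits high above the compact surface $M$ and is disjoint from it, and as $\lambda\searrow0$ its lowest level descends to $\r^3_0(\vec a)$ while its profile remains confined to a thin vertical strip about $\{x=0\}$ that reaches arbitrarily high; since $M$ must cross this strip, compactness produces a first value $\lambda_0$ with $\lambda_0 S\cap M\neq\emptyset$.

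The heart of the matter, and the step I expect to be the main obstacle, is to guarantee that a first contact point $p$ can be taken in the interior of $M$ rather than on $\Gamma_1\cup\Gamma_2$; a naive descent tends to touch $M$ first near its highest point, which sits on $\Gamma_2$, so the motion must be arranged with care. This is exactly where the two hypotheses cooperate: the vertical plane separates the curves horizontally, while their lying in \emph{different} horizontal planes lets the thin, arbitrarily tall profile be threaded through the horizontal gap so that the boundary of the barrier (the two vertical asymptotes when $\alpha>1$, or the portions of $S$ outside the gap when $\alpha\in(0,1]$, and their analogues for $\alpha<0$) stays clear of both $\Gamma_1$ and $\Gamma_2$ throughout the sweep, no matter how far apart or how high the curves lie. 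This is the mechanism that dispenses with the height bound $h_0$ required in Theorem \ref{t-n1}. Granting an interior first contact, $\lambda_0 S$ lies to one side of $M$ near $p$ and both surfaces satisfy $H_\varphi=0$, so the tangency principle yields $M\subset\lambda_0 S$ and the desired contradiction. The bookkeeping that keeps the barrier off the boundary during the whole motion, together with the separate treatment of the convex ($\alpha>0$) and concave ($\alpha<0$) profiles, is where the genuine work lies; a reassuring check is that the construction must break down when $c_1=c_2$, where a connected surface symmetric across $\{x=0\}$ is no longer excluded.
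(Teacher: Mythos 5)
Your barrier scheme is genuinely different from the paper's proof, but it breaks down exactly at the step you yourself flag as ``the heart of the matter,'' and the failure is not a matter of bookkeeping: for this family of barriers the key claim is false. Under dilations $\lambda S$ centred at a point of $\r^3_0(\vec{a})$, the half-width of the barrier at the height $c_2$ of the upper curve (the number $w(\lambda)$ with $\lambda S\cap\{z=c_2\}=\{x=\pm w(\lambda)\}$) is rigidly coupled to the height of its lowest ruling: as $\lambda$ decreases from the starting value (barrier above $M$, disjoint from it), $w(\lambda)$ increases continuously from $0$ up to a maximum of size $\kappa_\alpha c_2$ (for $\alpha=1$ this is $c_2/1.5088$, by the computation following Proposition \ref{p-cate}), and only after passing through this maximum does the barrier become ``thin and arbitrarily tall.'' Theorem \ref{t-ros} imposes no relation between the heights $c_i$ and the horizontal distances $d_i$ from $\Gamma_i$ to the separating plane, so the curves may well satisfy $d_2<\kappa_\alpha c_2$ (high curves hugging the plane $\{x=0\}$). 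In that case the descending sweep reaches a first $\lambda^*$ with $w(\lambda^*)=d_2$, at which moment the barrier contains the line $\{x=d_2,\ z=c_2\}$ and hence touches $\Gamma_2$; before that moment the portion of the barrier inside the strip $\{|x|\le d_2\}$ stays above a positive height $\mu$ (close to $c_2$ when $d_2/c_2$ is small), while the portion outside the strip stays strictly above $z=c_2\geq\max_M z$. Since nothing prevents the hypothetical $M$ from crossing the strip only at heights below $\mu$ --- and the proof must work for every conceivable $M$ --- you cannot guarantee an interior contact before the barrier hits the boundary; on the contrary, for $\alpha>0$ the highest points of $M$ lie precisely on $\Gamma_2$ (Proposition \ref{pr-1}), so boundary contact is the expected first event. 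Vertical translations cannot decouple width from depth, because they do not preserve equation (\ref{mean}). What your argument actually yields is a statement with an extra hypothesis tying $h(\Gamma_1\cup\Gamma_2)$ to the distance from the curves to the plane --- essentially Theorem \ref{t-n1} over again, whose hypothesis $h\leq h_0(m,\alpha)$ encodes exactly the restriction that Theorem \ref{t-ros} is designed to remove. (Your treatment of $\alpha<0$ via concave profiles is sketchier still, but the gap above is already fatal for $\alpha>0$.)

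The paper exploits the different-heights hypothesis through a reflection, not a barrier, following Rossman. Let $\mathcal{R}$ be the reflection across the separating vertical plane $P$; then $\mathcal{R}(M)$ is again an $\alpha$-singular minimal surface. Translate $\mathcal{R}(M)$ along a horizontal direction parallel to $P$ (the $y$-axis) until it is disjoint from $M$, then bring it back to a first contact point $p$. A boundary--boundary contact is impossible: $\Gamma_i$ and the translates of $\mathcal{R}(\Gamma_i)$ are separated by $P$, while $\Gamma_1$ and the translates of $\mathcal{R}(\Gamma_2)$ (resp.\ $\Gamma_2$ and $\mathcal{R}(\Gamma_1)$) lie in distinct horizontal planes --- this is precisely where $c_1\neq c_2$ enters. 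Hence $p$ is an interior point, and the tangency principle together with analyticity forces the translated $\mathcal{R}(M)$ to coincide with $M$, contradicting that their boundaries differ. This argument needs no size restriction, no case distinction on the sign or size of $\alpha$, and no analysis of catenaries at all; these are exactly the features your sweep cannot deliver.
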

\begin{proof} We follow the same ideas as in \cite{ro} in the context of  minimal surfaces. The proof is by contradiction. Let $M$ be a  compact connected singular minimal surface  spanning $\Gamma_1\cup\Gamma_2$ and let $P$ be a vertical plane separating $\Gamma_1$ from $\Gamma_2$.  After a horizontal translation and a rotation about the $z$-axis, we suppose that   $P$ is the plane of equation $x=0$ with $\Gamma_1\subset \{x>0\}$ and $\Gamma_2\subset \{x<0\}$. Let $\mathcal{R}$ be the reflection  about $P$. Then $\mathcal{R}(M)$ is a singular minimal surface whose boundary is $\mathcal{R}(\Gamma_1)\cup \mathcal{R}(\Gamma_2)$ and $\mathcal{R}(\Gamma_1)\subset\{x<0\}$ and $\mathcal{R}(\Gamma_2)\subset \{x>0\}$.
We translate $\mathcal{R}(M)$ in the direction of the $y$-axis until that $\mathcal{R}(M)$ is disjoint from $M$ and then come back it until the first time when $\mathcal{R}(M)$ touches $M$ at some point $p$. Because $P$ separates $\Gamma_1$ from $\Gamma_2$ and $\mathcal{R}(\Gamma_1)$ from $\mathcal{R}(\Gamma_2)$, this point $p$ must be an interior point. The tangency principle assures that $\mathcal{R}(M)=M$, which is a contradiction because $\partial \mathcal{R}(M)\not=\partial M$.
\end{proof}

 The following  non-existence result asserts that if two curves  are sufficiently far apart in vertical distance then they can not bound a  compact connected singular minimal surface. This result  is similar to what happens with the catenoid:  if   two coaxial circles are sufficiently close, there exists a catenoid spanning both circles, but if we  go separating the two circles sufficiently far, there is a time where  the catenoid breaks. In order to state our result,  we will fix one boundary curve and we change the position of the second one. Firstly, we suppose that $\Gamma_1$ and $\Gamma_2$ are contained   in a horizontal plane.
 
   \begin{theorem}\label{t-ne} Let $\alpha>0$. Let $\Gamma_1\subset\r^3_+$ be a closed curve contained in the   plane $P_1$ of equation $z=c_1$. Let  $\Gamma_2\subset P_1$ be a closed curve and denote $\Gamma_2^t$  the vertical translation of $\Gamma_2$ situated in the plane of equation $z=t$. Then   there exists $d_0=d_0(\alpha, \Gamma_1,\Gamma_2,c_1)$   such that if $t>d_0$, then there does not exist  a   compact connected $\alpha$-singular minimal surface with boundary $\Gamma_1\cup\Gamma_2^t$. 
     \end{theorem}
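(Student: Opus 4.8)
The plan is to argue by contradiction, building a one-parameter family of explicit singular minimal barriers and comparing $M$ with them via the tangency principle, in the spirit of Theorems~\ref{t-n1} and~\ref{t-esti}. After a change of coordinates put $\vec{a}=(0,0,1)$. Suppose a compact connected $\alpha$-singular minimal surface $M$ has boundary $\Gamma_1\cup\Gamma_2^t$. Two confinement facts are available for free: since vertical translation of $\Gamma_2$ does not alter horizontal projections, Corollary~\ref{prv2}(2) gives $\mathrm{int}(M)\subset \Omega\times\mathbb{R}$, where $\Omega$ is the fixed convex hull of $\pi(\Gamma_1\cup\Gamma_2)$, independent of $t$; and Proposition~\ref{pr-1} (case $\alpha>0$) gives $M\subset\{z\le t\}$ with $\max_M z=t$ attained only on $\Gamma_2^t$. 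I would record the horizontal diameter $\delta=\operatorname{diam}\pi(\Gamma_1\cup\Gamma_2)$, which depends only on $\Gamma_1,\Gamma_2$.

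The natural barrier is the half-cone $\mathcal{C}_q:\ z=\sqrt{\alpha}\,|(x,y)-q|$ with apex $q\in\r^3_0(\vec a)$. A direct check shows that the profile $f(x)=\sqrt{\alpha}\,x$ solves the rotational equation~(\ref{rot1}), so each $\mathcal{C}_q$ is an $\alpha$-singular minimal surface; moreover horizontal translations of the apex and dilations from points of $\r^3_0(\vec a)$ preserve this property, giving a genuine sweeping family. I would set $d_0=d_0(\alpha,\Gamma_1,\Gamma_2,c_1)=c_1+\sqrt{\alpha}\,\delta$ and assume $t>d_0$. The key elementary observation is the slope bound: every leaf $\mathcal{C}_q$ rises with slope exactly $\sqrt{\alpha}$, so if $M$ were forced to coincide with some $\mathcal{C}_q$ then the circles $\pi(\Gamma_1),\pi(\Gamma_2)$ would be concentric about $q$ with radii $c_1/\sqrt\alpha$ and $t/\sqrt\alpha$, whence $t-c_1=\sqrt{\alpha}\,(D_2-D_1)\le\sqrt{\alpha}\,\delta$, contradicting $t>d_0$.

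The sweeping itself proceeds as in Theorem~\ref{t-n1}: start with an apex so far from $\Omega$ that the cone lies entirely above $M$ (using $M\subset\{z\le t\}$ and $M\subset\Omega\times\mathbb{R}$), then move the apex toward $\Omega$ until the first contact point $p$ with $M$, and invoke the tangency principle (both surfaces have $H_\varphi=0$). If $p$ is interior, analyticity forces $M\subset\mathcal{C}_q$ and the slope relation above yields the contradiction. The delicate point, and what I expect to be the \emph{main obstacle}, is exactly to guarantee that $p$ is interior, i.e.\ to rule out that the first contact occurs on $\partial M=\Gamma_1\cup\Gamma_2^t$, where the tangency principle gives no information. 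This is where the threshold $d_0$ and the hypotheses on the heights enter: one measures contact against the tilted height functional $\Phi_{\hat e}(P)=z(P)-\sqrt\alpha\langle\pi(P),\hat e\rangle$, whose extrema on $\partial M$ are controlled by $c_1,t$ and $\delta$, so that the gap $t-c_1>\sqrt{\alpha}\,\delta$ prevents a boundary first-contact and pins the contact to the interior. This is the analogue of keeping the catenary-cylinder boundary above $h_0$ in Theorem~\ref{t-n1} and of the cone-containment claim in Theorem~\ref{t-esti}; getting the constant sharp (and thus the correct form of $d_0$) is the part that requires genuine care, and one may alternatively run the same scheme with the rotational cups of~(\ref{rot1})--(\ref{rot2}), which are asymptotic to these cones, as auxiliary barriers to localize the first contact.
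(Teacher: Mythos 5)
Your overall scheme (confinement via Corollary \ref{prv2} and Proposition \ref{pr-1}, a sweep by explicit singular minimal barriers, tangency principle at an interior first contact) is the same as the paper's, and two of your ingredients are correct: the half-cone $z=\sqrt{\alpha}\,|(x,y)-q|$ does solve (\ref{rot1}), and $M\subset\mathcal{C}_q$ would indeed force $t\le c_1+\sqrt{\alpha}\,\delta$. But the step you yourself flag as the main obstacle --- pinning the first contact to $\mathrm{int}(M)$ --- is not a technicality that a sharper constant can fix: for cone barriers it is simply false. Take $\pi(\Gamma_1)=\pi(\Gamma_2)$ equal to the circle of radius $r$ centered at the origin, so $\Omega$ is the disk of radius $r$ and $\Gamma_2^t$ is a coaxial circle at height $t$. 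For any apex $q$ with $|q|>r$, every $p\in M$ satisfies $z(p)\le t$ (Proposition \ref{pr-1}) and $|\pi(p)-q|\ge |q|-r$ (Corollary \ref{prv2}), hence
$$z(p)-\sqrt{\alpha}\,|\pi(p)-q|\ \le\ t-\sqrt{\alpha}\,\bigl(|q|-r\bigr),$$
with equality only if $z(p)=t$, which happens only on $\Gamma_2^t$ (interior points have $z<t$ strictly, again by Proposition \ref{pr-1}). The right-hand side is precisely the value of the comparison function at the point of $\Gamma_2^t$ nearest to $q$. So for \emph{every} apex position the cone, at the first instant it touches $\overline{M}$, touches it only at a point of $\Gamma_2^t$ --- a boundary point of $M$ that is an interior point of the cone, where neither the interior nor the boundary version of the tangency principle applies --- and this happens no matter how large $t$ is. Your tilted functionals $\Phi_{\hat e}$ cannot repair this: sweeping by tilted planes (which have $H_\varphi<0$) only yields the confinement $\max_M\Phi_{\hat e}=\max_{\partial M}\Phi_{\hat e}$, i.e.\ an \emph{upper} bound on $M$, and an upper bound can never force the cone to reach $\mathrm{int}(M)$ before it reaches $\Gamma_2^t$; the display above shows it never does. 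The same objection defeats your fallback barriers, the rotational cups of (\ref{rot1})--(\ref{rot2}): they are entire graphs asymptotic to the cone, so along the sweep their height over the disk of radius $r$ is unbounded and they too must first cross the level $z=t$, where $\Gamma_2^t$ sits.

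This is exactly what the paper's barrier choice is engineered to avoid. The paper sweeps with the \emph{winglike} rotational surfaces $S_\lambda$, whose generating curve has its lowest point at height $c=c_1+1$ and possesses a waist with vertical tangent: the portion of $S_\lambda$ inside the solid cylinder $D\times\r$ that contains $M$ is compact, has its boundary on the wall $\partial D\times\r$ (at positive distance from $M$), and its height does not exceed $z_\lambda(s_1(\lambda))$, the level at which the inner branch exits the cylinder. Setting $d_0:=\sup_\lambda z_\lambda(s_1(\lambda))$, for $t>d_0$ these barriers can touch neither $\Gamma_1$ (they stay above $z=c_1+1>c_1$ inside the cylinder) nor $\Gamma_2^t$ (they stay below $d_0<t$ there); since $M$ is connected and joins the level $c_1$ to the level $t$, a first contact occurs, it is automatically interior to both surfaces, and the tangency principle gives the contradiction. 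The moral is that the theorem needs barriers whose relevant piece is trapped in a fixed height band, which is what the waist topology provides and what cones and entire rotational graphs cannot; accordingly, the threshold $d_0$ in the paper comes from that family of surfaces rather than from an explicit formula such as $c_1+\sqrt{\alpha}\,\delta$, which in your argument only rules out the conclusion $M\subset\mathcal{C}_q$, not the boundary first contact.
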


\begin{proof} Let $R>0$ be a number such that $\Gamma_1$ and $\Gamma_2$ are included in the interior of the disk $D\subset P_1$ of radius $R$ centered at $(0,0,c_1)$. Let $c=1+c_1$. We will  work with the winglike-shaped $\alpha$-singular minimal surfaces    (\cite{ke}).  If $\gamma=(x(s),0,z(s))$ is the generating curve of this surface   parametrized by the arc length, then   $x'(s)=\cos\theta(s)$, $z'(s)=\sin\theta(s)$ for some function $\theta$. Now  equation (\ref{mean}) is   
\begin{equation}\label{angle}
\theta'(s)+\frac{\sin\theta(s)}{x(s)}=\alpha\frac{\cos\theta(s)}{z(s)}.
\end{equation}
Consider  initial conditions 
$$x(0)=\lambda,\quad z(0)=c>0,\quad  \theta(0)=0$$
and denote the solution $\{x_\lambda(s),z_\lambda(s),\theta_\lambda(s)\}$ to indicate its dependence on the parameter $\lambda$.  If $\gamma_\lambda(s)=(x_\lambda(s),0,z_\lambda(s))$, then   $\gamma_\lambda$ has a horizontal tangent line at the point $(\lambda,0,c)$. Moreover, this is the only point of $\gamma$ where the tangent line is horizontal and  also the  lowest point of $\gamma$, so the circle that generates on the surface lies in the lowest position. See Figure  \ref{figwing}.

\begin{figure}[hbtp]
\begin{center}
\begin{center}\resizebox{\textwidth}{!}{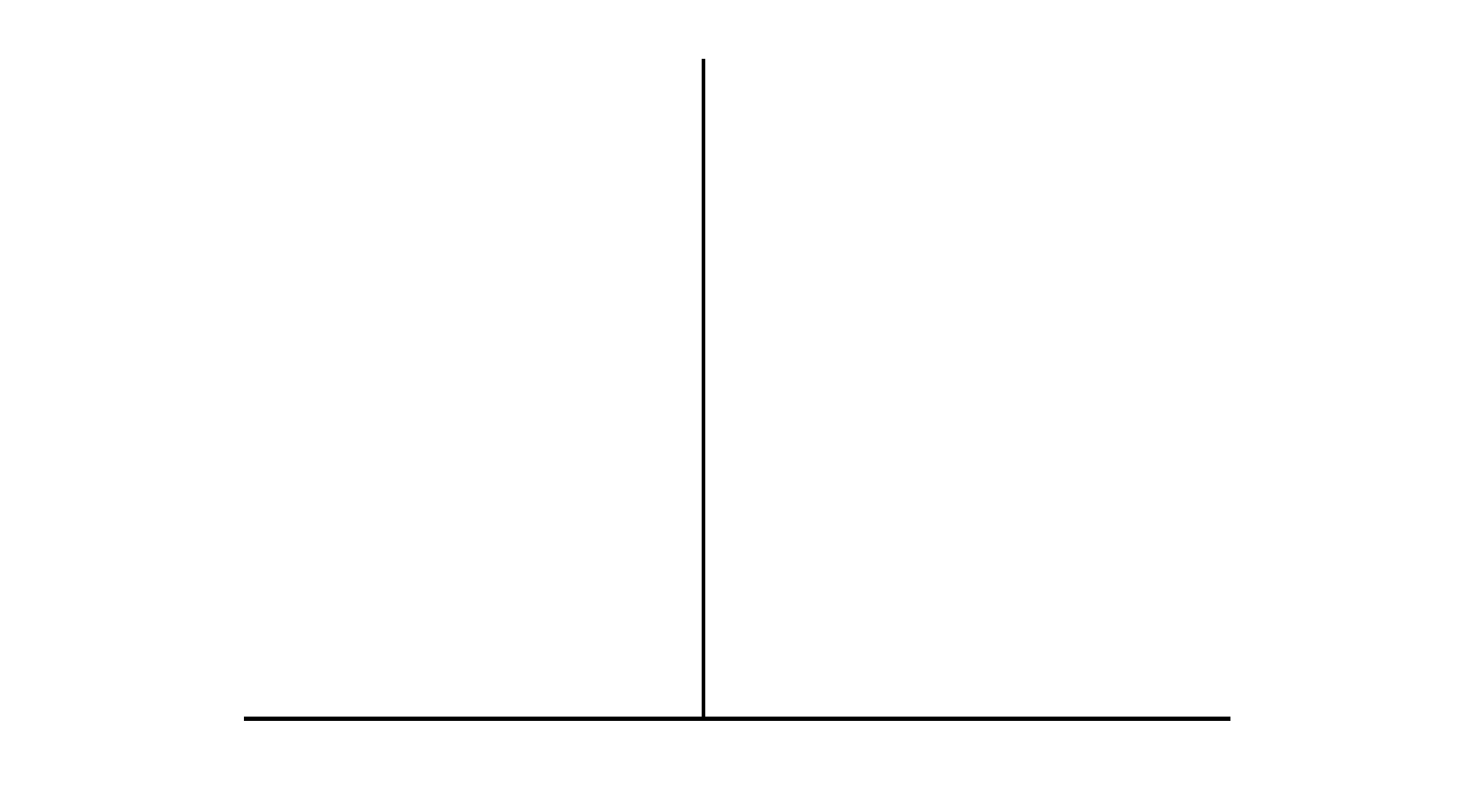}\end{center}
\end{center}
\caption{The generating curve $\gamma_\lambda$ for a winglike singular minimal surface  and the points $s=0$, $s=s_0$ and $s=s_1$. Here $\alpha=1$, $\lambda=1$ and $c=2$}\label{figwing}
\end{figure}

Recall that  the graphic of $\gamma_\lambda$ is asymptotic to the line $\{y=0,z=\sqrt{\alpha}x\}$ and  there exists $s_0<0$ such that $\theta(s_0)=-\pi/2$, that is, the tangent line of $\gamma$ at $s=s_0$ is vertical (\cite{ke}). This value $s_0$ determines the waist $C(x(s_0))\times\{z(s_0)\}$ of the rotational surface, where $C(x(s_0))$ is the circle of radius $x(s_0)$ on the $xy$-plane  centered at the origin.

As $\lambda\rightarrow 0$, the graphic of $\gamma_\lambda$ converges to the generating curve   $\gamma_0$ that intersects orthogonally the $z$-axis at the point $z=c$. See Figure  \ref{figesti}, left. For each $\lambda>0$, let $s_1=s_1(\lambda)<s_0$ be the unique value such that $x_\lambda(s_1)=R$. Let us observe that if $\lambda\rightarrow 0$, then $s_1\rightarrow 0$ and if $\lambda$ is sufficiently large, then $s_1$ does not exist: this situation occurs at least   when $x_\lambda(s_0)>R$. Let
$$d_0=\sup\{z_\lambda(s_1(\lambda)): \lambda>0\}.$$
With this value of $d_0$, we prove the  theorem by contradiction. Let $M$ be a    compact connected $\alpha$-singular minimal surface with boundary $\Gamma_1\cup\Gamma_2^t$ and  suppose $t>d_0$. It follows by   Corollary   \ref{prv2} that  $M$ is contained in the solid cylinder $D\times\r$. Take $\{S_\lambda:\lambda>0\}$ the uniparametric family of rotational $\alpha$-singular minimal surfaces whose generating curves are $\gamma_\lambda$: see Figure  \ref{figesti}, left. Let $\lambda_0$ be sufficiently large so $S_{\lambda_0}\cap M=\emptyset$. By letting  $\lambda\searrow 0$,   we know that the surfaces $S_\lambda$ does not intersect the disk $D\times\{c_1\}$ because $c>c_1$, neither  the disk $D\times\{c_2\}$ because $z_\lambda(s_1)\leq d_0<c_2$. The waist $C(x_\lambda(s_0))\times \{z_\lambda(s_0)\}$ tends to the point $(0,0,c)$ as $\lambda\rightarrow 0$. Since $M$ is connected,   there would be a first value $\lambda=\lambda_1$ such that $S_{\lambda_1}$ has a contact point $p$ with $M$, which must be an interior point: see Figure  \ref{figesti}, right. The tangency principle implies   $M\subset S_{\lambda_1}$, a contradiction because $\partial M$ is not contained in $S_{\lambda_1}$. 
\end{proof}

\begin{figure}[hbtp]
\begin{center}
\begin{center}\resizebox{\textwidth}{!}{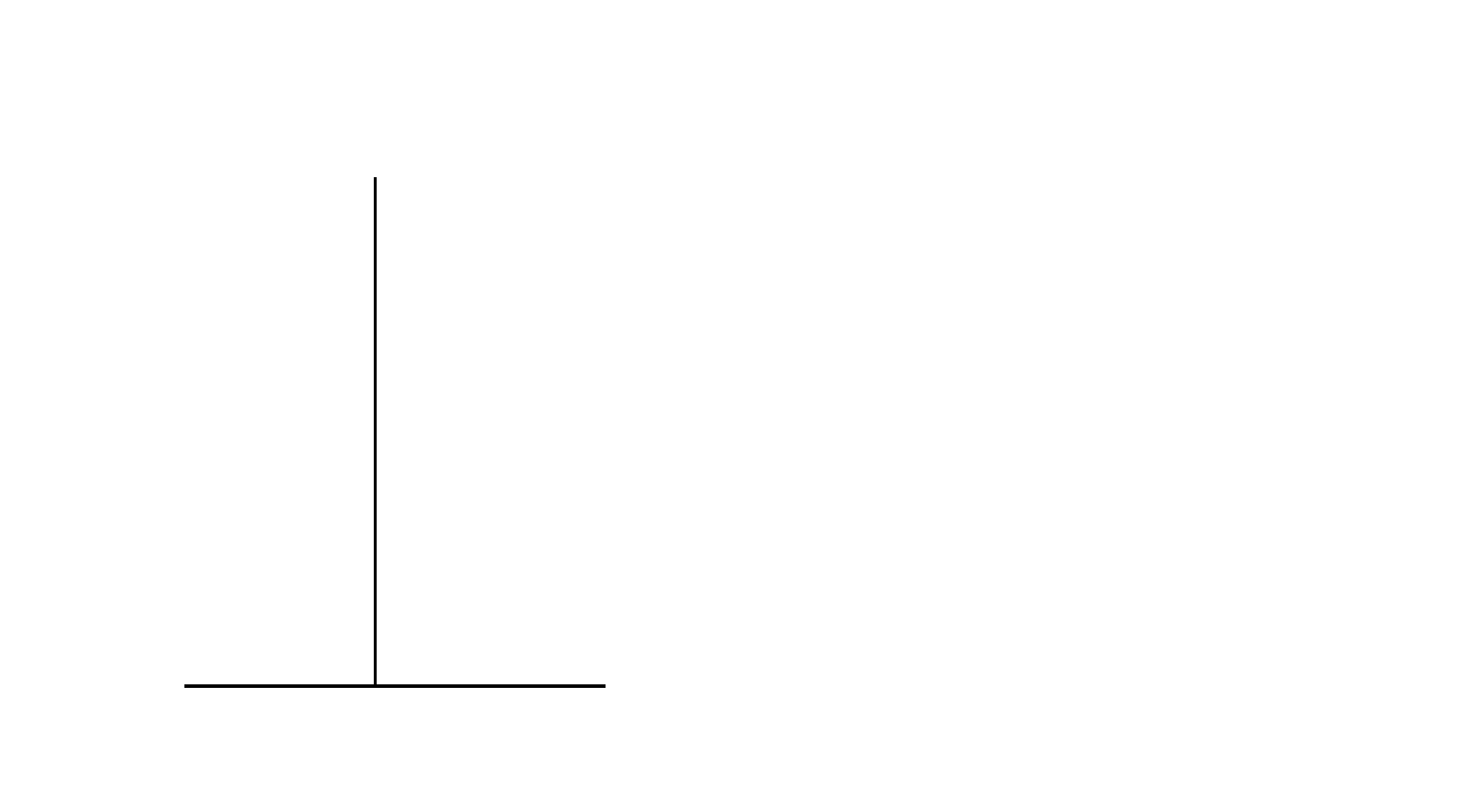}\end{center}
\end{center}
\caption{Left: the family of winglike singular minimal surfaces. Right: the first contact point between $S_\lambda$ and $M$}\label{figesti}
\end{figure}

As a consequence of this result,   we   consider the general case that $\Gamma_1$ and $\Gamma_2$ are not   contained in horizontal planes. 

\begin{corollary} 
Let $\alpha>0$. Let $\Gamma_1$ and $\Gamma_2$ be two closed curves of $\r^3_+$. Fix $\Gamma_1$ and let  $c_1=\sup\{z(p):p\in\Gamma_1\}$.  Then there exists $d_0>c_1$ such that if  $t>d_0$,  then there does not exist a   compact connected $\alpha$-singular minimal surface with boundary $\Gamma_1\cup\Gamma_2^t$.
\end{corollary}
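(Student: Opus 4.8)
The plan is to reduce the general statement to the winglike barrier construction already carried out in the proof of Theorem \ref{t-ne}; the only genuinely new ingredient is an enclosure of $M$ in a \emph{fixed} solid cylinder whose radius does not depend on the parameter $t$. Throughout, let $\pi\colon\r^3\to\r^3_0(\vec{a})$ denote the orthogonal projection onto the horizontal plane and set $c_1=\sup\{z(p):p\in\Gamma_1\}$.

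First I would control the horizontal extent of $M$. Since a vertical translation leaves the horizontal projection unchanged, $\pi(\Gamma_1\cup\Gamma_2^t)=\pi(\Gamma_1)\cup\pi(\Gamma_2)$ is a compact set independent of $t$. Let $M$ be a compact connected $\alpha$-singular minimal surface with boundary $\Gamma_1\cup\Gamma_2^t$. Proposition \ref{prv} offers two alternatives. If $M$ lies in a vertical plane $V$, then $\Gamma_1\cup\Gamma_2^t\subset V$; but for $t$ large the two closed curves sit at heights differing by more than their diameters, so they cannot be nested in $V$ and hence bound no connected planar region there, which excludes this case. Otherwise $\langle p,\vec{v}\rangle$ attains no interior extremum on $M$ for any horizontal $\vec{v}$, so $\pi(M)$ is contained in the convex hull of $\pi(\Gamma_1)\cup\pi(\Gamma_2)$. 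After a horizontal translation, choose a disk $D\subset\r^3_0(\vec{a})$ of radius $R$ centered at the origin containing this convex hull; then $M\subset D\times\r\vec{a}$ for \emph{every} $t$.

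Next I would reuse the family of winglike rotational barriers $S_\lambda$ from the proof of Theorem \ref{t-ne}, now taking $c=1+c_1$. Their generating curves $\gamma_\lambda$ have a unique lowest point at height $c$ and reach the radius $R$ of $D$ at heights bounded above by a finite constant $d_0$, defined exactly as in that proof. Since every such crossing height exceeds the lowest height $c>c_1$, one gets $d_0>c_1$, which is the first assertion of the corollary. Because $M\subset D\times\r\vec{a}$, any barrier can meet $M$ only inside the cylinder, where the lowest circle of $S_\lambda$ sits at height $c>c_1$, hence above $\Gamma_1$, while the top of $S_\lambda$ stays below $d_0$.

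Finally, for $t>d_0$, with $\Gamma_2^t$ normalized so that its lowest point lies at height $t$, the surfaces $S_\lambda$ avoid both boundary curves: inside $D\times\r\vec{a}$ they lie above $\Gamma_1$ and below $\Gamma_2^t$ (using $t>d_0>c>c_1$). Starting from $\lambda$ so large that $S_\lambda\cap M=\emptyset$ and letting $\lambda\searrow 0$, the waists shrink toward the point $(0,0,c)$; since $M$ is connected and meets both the region $z\le c_1$ and the region $z\ge t$, a first contact with $M$ occurs at some $\lambda=\lambda_1$, and by the above it must be at an interior point $p$. As $M$ lies to one side of $S_{\lambda_1}$ near $p$ and both surfaces are $\alpha$-singular minimal, the tangency principle forces $M\subset S_{\lambda_1}$, contradicting that $\partial M=\Gamma_1\cup\Gamma_2^t$ is not contained in $S_{\lambda_1}$. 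The main obstacle is precisely the enclosure step: one must secure a single, $t$-independent cylinder $D\times\r\vec{a}$ about $M$ and dispose of the degenerate vertical-plane possibility; once $M\subset D\times\r\vec{a}$ is in hand, the sweeping argument of Theorem \ref{t-ne} carries over without change.
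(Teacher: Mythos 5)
Your proposal is correct and rests on the same two pillars as the paper's proof: the $t$-independent enclosure $M\subset D\times\r$ coming from the convex-hull property of Proposition \ref{prv}, and the sweep by the winglike barriers $S_\lambda$ with the constant $d_0$ of Theorem \ref{t-ne}. The difference is organizational. The paper does not re-run the sweep on $M$ itself: it fixes $c_2>d_0$, takes a connected component $M^*$ of $M\cap\{c_1\leq z\leq c_2\}$ whose boundary meets both planes $P_1=\{z=c_1\}$ and $P_2=\{z=c_2\}$, observes that every component of $\partial M^*$ lies in $P_1\cup P_2$ and in $D\times\r$ (no point of $\Gamma_1\cup\Gamma_2^t$ lies strictly between the two planes), and then quotes the argument of Theorem \ref{t-ne} verbatim for $M^*$. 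You instead apply the barriers directly to $M$, which obliges you to check that $S_\lambda$ misses the non-planar boundary curves; your check is exactly right: $S_\lambda\subset\{z\geq c\}$ with $c=1+c_1$ keeps it off $\Gamma_1$, and inside the cylinder $S_\lambda\subset\{z\leq d_0\}$ (the branch beyond the waist lies above the outer branch, so the height of $S_\lambda\cap(D\times\r)$ is at most $\sup_\lambda z_\lambda(s_1(\lambda))=d_0$) keeps it off $\Gamma_2^t$. So the paper's slicing buys a literal reduction to the planar-boundary case, while your route generalizes that case's avoidance estimates and skips the auxiliary surface $M^*$; both are sound. One caveat: your dismissal of the vertical-plane alternative via ``the curves cannot be nested'' implicitly treats $M$ as embedded, whereas the corollary allows immersions. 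This is easily repaired — either note that inside a vertical plane the horizontal coordinate functions still have no interior critical points, so the enclosure $\pi(M)\subset\mathrm{conv}\bigl(\pi(\partial M)\bigr)$ and hence the whole sweep go through unchanged, or observe that an interior level circle of the height function on a planar $M$ would immerse a circle into a line, which is impossible — but as written that step is the one soft spot (the paper sidesteps it by silence, so you are in fact being more careful than the source, just not quite carefully enough).
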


\begin{proof}
Let $R>0$ be sufficiently large so the orthogonal projection of $\Gamma_i$ on the $xy$-plane is contained in a disk $D$ of radius $R$. If $P_1$ is the plane of equation $z=c_1$, let us move up vertically $\Gamma_2$ until the position $\Gamma_2^t$ such  that the value $c_2:=\inf\{z(p):p\in\Gamma_2^t\}$ satisfies  $c_2>d_0$, where $d_0$ is given by  Theorem \ref{t-ne}. Here  $P_2$ is the plane of equation $x=c_2$. If $M$ is a connected $\alpha$-singular minimal surface with boundary $\Gamma_1\cup\Gamma_2^t$, consider a component $M^*$ of $M\cap\{c_1\leq z\leq c_2\}$ such that its boundary $\partial M^*$ meets both $P_1$ and $P_2$. Let us observe that $\partial M^*$ may have many be not connected, but all their components are included  in the planes $P_1$ or $P_2$ and in the solid cylinder $D\times \r$. Then the same argument as in Theorem \ref{t-ne} gives a contradiction.  
\end{proof}


\end{document}